\newtheorem{thm}{Theorem}[section]
 \newtheorem{lem}{Lemma}[section]
 \newtheorem{prop}{Proposition}[section]
 \newtheorem{defn}{Definition}[section]
\newtheorem{rem}{Remark}[section]
\def\d{\partial}
\def\ddj{\dot \Delta_j}
\def\tilde{\widetilde}
\def\hat{\widehat}
\def \zr1{$z_{R,1}$}
\def \zr2{z_{R,2}}
\def \zi1{z_{I,1}}
\def \zi2{z_{I,2}}
\def\cP{{\mathcal P}}
\def\cQ{{\mathcal Q}}
\def\Scho{{Schr$\ddot{o}$dinger}}
\renewcommand{\div}{\mbox{\rm div}\;\!}
\newcommand\R{\mathbb{R}}
\begin{document}
\title{Global dynamics of large solution for the compressible Navier-Stokes-Korteweg equations}
\author{Zihao Song}
\date{\it \small Research Institute for Mathematical Sciences, Kyoto University\\
Kyoto, 606-8007, Japan\\
 E-mail: szh1995@nuaa.edu.cn}

\maketitle
\begin{abstract}
In this paper, we study the Navier-Stokes-Korteweg equations governed by the evolution of compressible fluids with capillarity effects. We first investigate the global well-posedness of solution in the critical Besov space for large initial data. Contrary to pure parabolic methods in Charve, Danchin and Xu \cite{CDX}, we also take the strong dispersion due to large capillarity coefficient $\kappa$ into considerations. By establishing a dissipative-dispersive estimate, we are able to obtain uniform estimates and incompressible limits in terms of $\kappa$ simultaneously.

Secondly, we establish the large time behaviors of the solution. We would make full use of both parabolic mechanics and dispersive structure which implicates our decay results without limitations for upper bound of derivatives while requiring no smallness for initial assumption.
\end{abstract}
{\bf Keywords:} Global well-posedness; Incompressible limit; Large time behaviors; Dissipative-dispersive estimates; Navier-Stokes-Korteweg system\\
{\bf MSC 2020}: 76N10, 35D35, 35Q30

\section{Introduction}\setcounter{equation}{0}
In this paper, we investigate the compressible Navier-Stokes-Korteweg model, whose theory formulation was first introduced by Van der Waals \cite{V}, Korteweg \cite{K}. The model aims to study the dynamics of a liquid-vapor mixture in the Diffuse Interface (DI) approach, where the phase changes are described through the variations of the density. The rigorous derivation of the corresponding equations for the compressible Navier-Stokes-Korteweg system (NSK) reads as:
\begin{equation}
\left\{
\begin{array}{l}\partial_{t}\rho+\mathrm{div}(\rho u)=0,\\ [1mm]
 \partial_{t}(\rho u)+\mathrm{div}(\rho u \otimes u)+\nabla P(\rho)=\mathcal{A}(u)+\mathrm{div}K(\rho).\\[1mm]
 \end{array} \right.\label{1.1}
\end{equation}
Here, $\rho=\rho(t,x)\in \mathbb{R}_{+}$ and $u=u(t,x)\in \mathbb{R}^{d}(d\geq2)$ are the unknown functions on $[0,+\infty)\times \mathbb{R}^{d}$, which stand for the density
and velocity field of a fluid, respectively. We neglect the thermal fluctuation so that the pressure $P=P(\rho)$ reduces to a function of $\rho$ only. The notation
$\mathcal{A}u$ is given by $\mathcal{A}u=\mathrm{div}(2\mu D(u))+\nabla(\lambda\mathrm{div}u)$ with $D(u)\triangleq\frac{1}{2}(\nabla u+\nabla^T u)$, where the Lam\'{e} coefficients $\lambda$ and $\mu$ (the \textit{bulk} and \textit{shear viscosities}) are density-dependent functions, respectively. In order to ensure the uniform ellipticity of $\mathcal{A}u$,
they are assumed to satisfy
$$\lambda>0, \nu\triangleq\lambda+2\mu>0.$$
In the following, the Korteweg tensor is given by (see \cite{BDDJ})
$$\mathrm{div}K(\rho)=\kappa\rho\nabla\big(m(\rho)\Delta\rho+\frac{1}{2}m'(\rho))|\nabla\rho|^{2}\big).$$
Here, $\kappa\in \mathbb{R}^{+}$ represents the capillarity coefficient while $m(\rho)$ may depend on $\rho$ in general. The initial condition of System (\ref{1.1}) is prescribed by
\begin{equation}\label{1.2}
\left(\rho,u\right)|_{t=0}=\left(\rho _{0}(x),u_{0}(x)\right),\  x\in \R^{d}.
\end{equation}
In this paper, we investigate the Cauchy problem  \eqref{1.1}-\eqref{1.2}, where initial data tends to a constant equilibrium $(\rho^{\ast},0)$ with $\rho^{\ast}>0$.

There were fruitful mathematical results on the compressible fluid models of Korteweg type in the past thirty years. Hattori and Li  \cite{HL,HL1} obtained global smooth solutions for initial data close enough to a stable equilibrium $(\rho^{\ast},0)$. Bresch, Desjardins and Lin \cite{DDL} established the global existence of weak solutions in a periodic or strip domain. However, the uniqueness problem of weak solutions has not been solved. A natural way of dealing with the uniqueness is to find a functional setting as large as possible in which the existence and uniqueness hold. This idea is closely linked with the concept of scaling invariance space, which has been successfully employed by Fujita-Kato \cite{FK}, Cannone \cite{Can} and Chemin \cite{C} for incompressible Navier-Stokes equations. Danchin \cite{D} first developed the idea of scaling invariance in compressible Navier-Stokes equations. Note the fact that (NSK) is invariant by the transformation
\begin{eqnarray*}
\rho(t,x)\leadsto \rho(\ell ^2t,\ell x),\quad
u(t,x)\leadsto \ell u(\ell^2t,\ell x), \ \ \ell>0
\end{eqnarray*}
up to a change of the pressure term  $P$ into $\ell^2P$, Danchin and Desjardins \cite{DD} investigated the global well-posedness of perturbation solutions for (NSK) in critical Besov spaces. Charve, Danchin and Xu \cite{CDX} investigated the global existence and Gevrey analyticity of \eqref{1.1} in more general critical $L^p$ framework. Chikami and Kobayashi \cite{CK} studied the optimal time-decay estimates in the $L^2$ critical Besov spaces. Kawashima, Shibata and Xu \cite{KSX} investigated the dissipation effect of Korteweg tensor with the density-dependent capillarity and developed the $L^p$ energy methods (independent of spectral analysis), which leads to the optimal time-decay estimates of strong solutions. Murata and Shibata \cite{MS} addressed a totally different statement on the global existence of strong solutions to \eqref{1.1} in Besov spaces, where the maximal $L^p$-$L^q$ regularity was mainly employed.

One important direction of studying fluid models is to investigate the singular limit in terms of different physical parameters. For Navier-Stokes-Korteweg equations, the zero mach number limit is studied in \cite{LY,JX}. As for vanishing limits for capillarity coefficient, one could refer to results given in \cite{BYZ,BH,HYZ}. Recently, some multi-scale problems for Korteweg system with high rotation were considered in Fanelli \cite{F}.

However, there are few results concern the case with large Korteweg coefficient $\kappa$, which may reflect strong capillarity effect for the fluid, and the corresponding converge process. In this paper, we would investigate this situation where the perturbation system admits not only parabolic mechanics, but also dispersive structures because of its strong capillarity effect. Our first goal is to prove the global well-posedness with initial velocity arbitrary large in the critical Besov space under $\kappa$ large enough and the global existence of the incompressible Navier-Stokes equation (INS) which reads
\begin{equation*}\label{incompressible}
\qquad\qquad\left\{
\begin{array}{l}\partial_{t}v-\mu\Delta v+v\cdot\nabla v+\nabla\pi=0,\\ [1mm]
\mathrm{div}v=0,\qquad\qquad\qquad\qquad\qquad\qquad\qquad\qquad\qquad\mathrm{(INS)}\\ [1mm]
v|_{t=0}=v_{0}(x).
 \end{array} \right.
\end{equation*}
Also, we will show the solution we construct converges to the solution of the incompressible Navier-Stokes equation. Moreover, we would establish the optimal decay rates for the solution of any order derivatives, but without asking any smallness for the initial assumption.

\section{Reformulation and main results}
Without loss of generality, we shall fix the equilibrium of density to be $\rho^{\ast}=1$ and assume $P'(1)=\mu(1)=\lambda(1)=m(1)=1$. Denote the density fluctuation by $a=\sqrt{\kappa}\mathcal{L}(\rho)$ where
$$\mathcal{L}(\rho)=\int\limits^{\rho}_{1}\sqrt{\frac{m(s)}{s}}ds.$$
Then the density $\rho$ is also given by $\rho=\mathcal{L}^{-1}(\kappa^{-\frac{1}{2}}a)$. A simple calculation leads us to the following perturbation problem:
\begin{equation}
\left\{
\begin{array}{l}\partial_{t}a+\sqrt{\kappa}\mathrm{div}u=f,\\ [1mm]
 \partial_{t}u-\bar{\mathcal{A}}u+\frac{1}{\sqrt{\kappa}}\nabla a-\sqrt{\kappa}\nabla\Delta a= g,\\[1mm]
(a,u)|_{t=0}=(a_{0},u_{0}),\\[1mm]
 \end{array} \right.\label{linearized}
\end{equation}
with
$$a_{0}\triangleq \sqrt{\kappa}\mathcal{L}(\rho_{0}) \quad \mbox{and} \quad \bar{\mathcal{A}}u=\Delta u+2\nabla\mathrm{div}u.$$
One can write the nonlinear terms $f=-u\cdot\nabla a-\sqrt{\kappa}\tilde{\psi}(\kappa^{-\frac{1}{2}}a)\mathrm{div}u$ and $g=\sum\limits^{5}_{i=1} g_{i}$ as follows:
\begin{equation}
\left\{
\begin{array}{l}g_{1}=u\cdot\nabla u,\\ [1mm]
g_{2}=\big(1+\tilde{Q}(\kappa^{-\frac{1}{2}}a)\big)\big(\mathrm{div}(2\tilde{\mu}(\kappa^{-\frac{1}{2}}a) D(u))+\nabla(\tilde{\lambda}(\kappa^{-\frac{1}{2}}a)\mathrm{div}u)\big),\\ [1mm]
g_{3}=\kappa^{-\frac{1}{2}}\tilde{G}(\kappa^{-\frac{1}{2}}a)\nabla a,\\ [1mm]
g_{4}=\sqrt{\kappa}\nabla(\tilde{\psi}(\kappa^{-\frac{1}{2}}a)\Delta a),\\ [1mm]
g_{5}=\frac{1}{2}\nabla(|\nabla a|^{2}),\\ [1mm]
 \end{array} \right.\label{nonlinear}
\end{equation}
where $\tilde{Q}(a)=Q\circ\mathcal{L}^{-1}(a)$ (similarly definitions for $\tilde{\mu}, \tilde{\lambda}, \tilde{G}, \tilde{\psi}$) with
\begin{eqnarray*}
Q(\rho)&=&\frac{1}{\rho}-1,\quad G(\rho)=\frac{P'(\rho)}{\sqrt{m(\rho)\rho}}-1,\quad \bar{\mu}(\rho)=\mu(\rho)-1,\\
&&\bar{\lambda}(\rho)=\lambda(\rho)-1,\quad \psi(\rho)=\sqrt{m(\rho)\rho}-1.
\end{eqnarray*}
In our analysis, those functions are assumed to be smooth and vanishing at zero whose exact values will not matter. Now, denote Leray projector by $\mathcal{P}=\mathrm{Id}-\nabla(-\Delta)^{-1} \mathrm{div}$, our first main result is stated as follows.
\begin{thm}\label{thm2.1}
Assume $d\geq2$ and $\kappa>1$. Let $(\frac{1}{\sqrt{\kappa}}a_{0},\nabla a_{0}, u_{0})\in\dot{B}^{\frac{d}{2}-1}_{2,1}$. If $\mathrm{(INS)}$ with initial data $\cP u_{0}$ admits a global solution $v$ satisfying for any $T\in(0,\infty)$ that
\begin{eqnarray}
v\in\mathcal{C}_{T}(\dot{B}^{\frac{d}{2}-1}_{2,1})\cap L^1_{T}(\dot{B}^{\frac{d}{2}+1}_{2,1}),
\end{eqnarray}
then there exists a positive $\kappa_{0}$ depending on the initial data such that for all $\kappa\geq\kappa_{0}$, the Cauchy problem (\ref{linearized})-(\ref{nonlinear}) admits a unique global-in-time solution $(a,u)$ satisfying
\begin{equation}\label{bound}
(\frac{1}{\sqrt{\kappa}}a,\nabla a, u)\in\mathcal{C}_{T}(\dot{B}^{\frac{d}{2}-1}_{2,1})\cap L^1_{T}(\dot{B}^{\frac{d}{2}+1}_{2,1}).
\end{equation}
Furthermore, $(\nabla a,\cQ u)$ tends to 0 in $\tilde{L}^{2}_{T}(\dot{B}^{\frac{d}{p}}_{p,1})$, $\cP u$ tends to $v$ in $L^\infty_{T}(\dot{B}^{\frac{d}{2}-1}_{2,1})\cap L^1_{T}(\dot{B}^{\frac{d}{2}+1}_{2,1})$ satisfying
\begin{equation}\label{converge}
\|(\nabla a,\mathcal{Q}u)\|_{\tilde{L}^{2}_{T}(\dot{B}^{\frac{d}{p}}_{p,1})}\leq C\kappa^{-\delta};\quad
\|\cP u-v\|_{L^\infty_{T}(\dot{B}^{\frac{d}{2}-1}_{2,1})\cap L^1_{T}(\dot{B}^{\frac{d}{2}+1}_{2,1})}\leq C\kappa^{-\frac{\delta}{2}},
\end{equation}
where $(\delta,p)$ satisfies
\begin{equation}\label{pair}
(\delta,p)=\left\{
\begin{array}{l}(\frac{1}{4},\frac{2d}{d-2}),\qquad \quad d\geq3\\ [1mm]
(\frac{1}{4}-\varepsilon,\frac{1}{2\varepsilon}),\qquad  d=2
 \end{array} \right.
\end{equation}
with positive $\varepsilon>0$ sufficient small.
\end{thm}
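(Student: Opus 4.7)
The plan is to split the unknowns into an incompressible part $\cP u$ and a compressible part $(a,\cQ u)$, to exploit the dispersive structure of the $(a,\cQ u)$ subsystem provided by the $\sqrt{\kappa}$ coupling, and to compare $\cP u$ with the solution $v$ of (INS) through the ansatz $\cP u=v+w$. The compressible part will carry a smallness $\kappa^{-\delta}$ in a Strichartz-type norm, while $w$ will absorb all bilinear couplings and decay like $\kappa^{-\delta/2}$ in the heat-energy norm.

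First, I would derive the linear dissipative-dispersive estimate. Applying $\cP$ to the second equation of \eqref{linearized} gives a pure heat equation for $\cP u$; applying $\cQ$ and coupling with the first equation yields, after reduction, a system for $(a,\cQ u)$ whose Fourier symbol has eigenvalues of the form $-c|\xi|^{2}\pm i\sqrt{\kappa}\,|\xi|\sqrt{1+|\xi|^{2}}$ (modulo lower order corrections). This symbol is both parabolic and, on each Littlewood--Paley block, a dispersive wave of speed $\sqrt{\kappa}$. A stationary-phase / $TT^{\ast}$ argument at frequency $2^{j}$, combined with the parabolic damping, then yields a Strichartz estimate that gains a factor $\kappa^{-\delta}$ in $\tilde L^{2}_{T}(\dot B^{d/p}_{p,1})$ with the pair $(\delta,p)$ of \eqref{pair}, the two-dimensional case being borderline and costing the $\varepsilon$-loss. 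In parallel, maximal regularity for the Lam\'e operator $\bar{\cA}$ yields the $L^{1}_{T}(\dot B^{d/2+1}_{2,1})$ control of the parabolic unknowns uniformly in $\kappa\geq 1$.

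Second, I would close a bootstrap for the nonlinear problem on the norm
$$X(T)=\|(\tfrac{1}{\sqrt{\kappa}}a,\nabla a,u)\|_{\tilde L^{\infty}_{T}(\dot B^{d/2-1}_{2,1})\cap L^{1}_{T}(\dot B^{d/2+1}_{2,1})}+\|(\nabla a,\cQ u)\|_{\tilde L^{2}_{T}(\dot B^{d/p}_{p,1})}.$$
The nonlinear terms $f$ and $g_{1},\dots,g_{5}$ are estimated via standard product and composition laws in critical Besov spaces; the dangerous capillarity pieces $g_{4}=\sqrt{\kappa}\nabla(\tilde\psi(\kappa^{-1/2}a)\Delta a)$ and $g_{5}=\tfrac12\nabla|\nabla a|^{2}$ are handled by placing exactly one factor of $\nabla a$ in the dispersive norm, whose $\kappa^{-\delta}$ gain defeats the $\sqrt{\kappa}$ prefactor once $\kappa\geq\kappa_{0}$. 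Analogously for the continuity equation source $\sqrt{\kappa}\tilde\psi(\kappa^{-1/2}a)\div u$. With the a priori bound $N_{0}$ for $v$ given by hypothesis, I treat the coupling between the compressible part and $v$ as an external source, and a standard continuity argument in $T$ then yields $X(T)\leq C(N_{0})$ for all $T$, provided $\kappa_{0}=\kappa_{0}(N_{0},\|(a_{0},u_{0})\|_{\dot B^{d/2-1}_{2,1}})$ is chosen large enough. Local existence and uniqueness follow from the already established $L^{p}$-framework of Charve--Danchin--Xu \cite{CDX}, so the continuation argument only requires the uniform a priori bound.

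Third, to establish the incompressible limit, I would set $w=\cP u-v$ and note that $w$ satisfies a heat-type equation whose right-hand side consists of: (i) the nonlinear terms $\cP g_{1},\dots,\cP g_{5}$, each of which is bilinear with at least one factor involving $\nabla a$ or $\cQ u$ and thus of size $\kappa^{-\delta/2}$ after interpolating the dispersive norm with the parabolic one; (ii) the difference $v\cdot\nabla v-\cP u\cdot\nabla\cP u$, which is linear in $w$ and absorbable. Maximal regularity for the heat equation then delivers $\|w\|_{L^{\infty}_{T}(\dot B^{d/2-1}_{2,1})\cap L^{1}_{T}(\dot B^{d/2+1}_{2,1})}\leq C\kappa^{-\delta/2}$, which is the second bound in \eqref{converge}; the first one is simply the dispersive component of $X(T)$.

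\textbf{Main obstacle.} The crux is the dissipative-dispersive estimate: one must extract from the $\sqrt{\kappa}$-coupling a quantitative $\kappa^{-\delta}$ gain in a Lebesgue exponent $p$ that is subcritical enough for the product laws to close, yet sufficiently close to $2$ not to destroy the $\dot B^{d/p}_{p,1}$ embedding used in the bilinear estimates. The pair \eqref{pair} is exactly the Strichartz-admissible choice that balances these two constraints, with $d=2$ being truly borderline and forcing the $\varepsilon$-loss; all other steps are essentially Danchin-type arguments once this linear estimate is secured.
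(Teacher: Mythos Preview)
Your overall architecture---splitting into $(a,\cQ u)$ and $\cP u=v+w$, a dissipative-dispersive Strichartz estimate yielding the $\kappa^{-\delta}$ gain, and a bootstrap closed via heat maximal regularity---is exactly the route the paper takes. The linear estimate you describe and the treatment of $g_{1},g_{3},g_{5}$ and of $w$ are correct and match the paper.

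There is, however, a genuine gap in your handling of the quasilinear capillarity term $g_{4}=\sqrt{\kappa}\,\nabla(\tilde\psi(\kappa^{-1/2}a)\Delta a)$ and its partner $-\sqrt{\kappa}\,\tilde\psi(\kappa^{-1/2}a)\mathrm{div}\,u$ in $f$. First, note that the composition already carries a factor $\kappa^{-1/2}$ (since $\tilde\psi(0)=0$), so the net $\kappa$-weight of $g_{4}$ is $O(1)$, not $O(\sqrt{\kappa})$; there is no ``$\sqrt{\kappa}$ prefactor'' for the dispersive $\kappa^{-\delta}$ to defeat, and in any case $\kappa^{-\delta}$ with $\delta\le\frac14$ could never beat $\kappa^{1/2}$. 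Second, and more seriously, estimating $g_{4}$ directly in $L^{1}_{T}(\dot B^{d/2-1}_{2,1})$ yields a contribution of the type $\|a\|_{L^{\infty}_{T}L^{\infty}}\|\nabla a\|_{L^{1}_{T}(\dot B^{d/2+1}_{2,1})}\sim\mathcal{E}_{T}^{2}$ with \emph{no} smallness in $\kappa$: the factor $\Delta a$ sits at the top derivative level and cannot be placed in the dispersive $\tilde L^{2}_{T}(\dot B^{d/p}_{p,1})$ norm, which only controls $\nabla a$. With an uncontrolled $\mathcal{E}_{T}^{2}$ on the right-hand side, your bootstrap $X(T)\lesssim N_{0}+X(T)^{2}+\kappa^{-\delta}(\cdots)$ cannot close for large data.

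The paper resolves this by a symmetric cancellation at the level of the localized $L^{2}$ energy estimate (this is precisely the point of the modified density variable $a=\sqrt{\kappa}\,\mathcal L(\rho)$, cf.\ Remark~2.1). Testing the $\nabla a$-equation with $\nabla a_{j}$ and the $\cQ u$-equation with $\cQ u_{j}$, the two dangerous contributions produce, after integration by parts, the \emph{same} integral $\int\tilde\psi(\kappa^{-1/2}a)\,\mathrm{div}\,\cQ u_{j}\,\Delta a_{j}\,dx$ with opposite signs, so they cancel exactly. What remains are commutators $\sqrt{\kappa}\,[\ddj,\tilde\psi(\kappa^{-1/2}a)](\mathrm{div}\,u+\Delta a)$, which gain one derivative; since $\nabla\tilde\psi(\kappa^{-1/2}a)=\kappa^{-1/2}\tilde\psi'(\kappa^{-1/2}a)\nabla a$, the $\sqrt{\kappa}$ is absorbed and one is left with $\|\nabla a\|_{L^{2}_{T}L^{\infty}}\|(\nabla a,\cQ u)\|_{L^{2}_{T}(\dot B^{d/2}_{2,1})}\lesssim\kappa^{-\delta}\mathcal D_{T}\mathcal E_{T}$, which does carry the needed smallness. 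You should replace your direct product estimate for $g_{4}$ by this energy cancellation; once that is done, the rest of your outline goes through as written.
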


\begin{rem}
The motivation of Theorem \ref{thm2.1} initiates from observation that linearized system admits a \Scho\, type dispersive structure brought by the three order term. Different from pure parabolic methods in \cite{CDX}, we would take both dissipation and dispersion into consideration and establish a Strichartz type estimate which implicates some smallness in terms of the coefficient $\kappa$.

Instead of classical perturbation variable $\rho-\rho^*$, we apply the modified density $a(\rho)$ which defines a diffeomorphism from $\mathbb{R}^+$ onto a small neighborhood
of 0. This fact enables us to symmtric those quasi-linear nonlinearities don't contain smallness of $\kappa$, and offers us some nice cancellations in energy estimates.
\end{rem}

\begin{rem}
The global-in-time well-posedness of $\mathrm{(INS)}$ with large initial data has been fixed in many directions. In 2D case, $\mathrm{(INS)}$ is always globally well-posed, see Leray \cite{L}, Serrin \cite{S}. For the case $d=3$, the global well-posedness is ensured with initial data with special geometry structure, like axisymmetric \cite{LMNP}, or those vary slowly in one space direction, see \cite{CG}.
\end{rem}

Secondly, we are going to establish the decay rates for the solution we establish above. Denote the multiplier $\Lambda^{\alpha}f\triangleq \mathcal{F}^{-1}(|\xi|^{\alpha}\hat{f}(\xi))$, with $\alpha\in\mathbb{R}$, our decay results are given as follows:
\begin{thm}\label{thm2.2}
Let $d\geq2$, $1-\frac{d}{2}<\sigma\leq\frac{d}{2}$. Let $(a,u)$ be the global-in-time solution constructed in Theorem \ref{thm2.1}. Assume initial data additionally satisfies
\begin{eqnarray}
(\frac{1}{\sqrt{\kappa}}a_{0},\nabla a_{0}, u_{0})\in\dot{B}^{-\sigma}_{2,\infty},
\end{eqnarray}
If the solution $v$ of $\mathrm{(INS)}$ with initial data $\cP u_{0}$ shares the decay estimates for any index $|\alpha|>-\sigma$ such that
\begin{eqnarray}\label{incompressible decay}
\|\Lambda^{\alpha}v\|_{L^2}\lesssim t^{-\frac{|\alpha|}{2}-\frac{\sigma}{2}},
\end{eqnarray}
then $(a,u)$ holds for any $t\geq1$
\begin{eqnarray}
\|\Lambda^{\alpha}(\frac{1}{\sqrt{\kappa}}a,\nabla a, u)\|_{L^2}\lesssim t^{-\frac{|\alpha|}{2}-\frac{\sigma}{2}}.
\end{eqnarray}
\end{thm}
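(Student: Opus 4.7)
The plan is to follow the interpolation-bootstrap strategy that is standard for large-time decay in the critical Besov framework (Okita, Danchin--Xu, Kawashima--Shibata--Xu), adapted to the reformulated system \eqref{linearized} so as to remain uniform in $\kappa$. The starting observation is that Theorem \ref{thm2.1} already provides global-in-time bounds of $(\tfrac{1}{\sqrt{\kappa}}a,\nabla a,u)$ in $\cC_T(\dot B^{d/2-1}_{2,1})\cap L^1_T(\dot B^{d/2+1}_{2,1})$. Combined with a propagated low-regularity norm and a time-weighted a priori estimate, these will give the algebraic decay rate by interpolation.

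The first and most delicate step I would carry out is the propagation of the negative Besov norm:
\[
\sup_{t\geq 0}\|(\tfrac{1}{\sqrt{\kappa}}a,\nabla a,u)(t)\|_{\dot B^{-\sigma}_{2,\infty}}\leq C_0.
\]
For this I would apply $\ddj$ to \eqref{linearized}, perform a symmetrized block-wise $L^2$ energy estimate on the triple $(\tfrac{1}{\sqrt{\kappa}}\ddj a,\ddj\nabla a,\ddj u)$ (this is precisely the variable arrangement of Remark 2.1: the linear terms involving $\sqrt{\kappa}$ then form a skew-adjoint pencil whose $L^2$ inner product vanishes identically, while $\bar\cA u$ supplies the parabolic damping). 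The nonlinear terms from $f$ and $g_1,\ldots,g_5$ are then bounded in $\dot B^{-\sigma}_{2,\infty}$ by the product/composition rules valid in the range $1-d/2<\sigma\leq d/2$, pairing one factor in $\dot B^{-\sigma}_{2,\infty}$ with another in the critical $\dot B^{d/2\pm 1}_{2,1}$ supplied by Theorem \ref{thm2.1}; a Gronwall argument on the time-integrated inequality closes the bound.

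The second step is the time-weighted bootstrap on the derivative index. For $s$ in a critical window I would define
\[
\cD_s(T)=\sup_{t\in[1,T]}t^{(s+\sigma)/2}\|(\tfrac{1}{\sqrt{\kappa}}a,\nabla a,u)(t)\|_{\dot B^s_{2,1}},
\]
and establish an a priori bound on $\cD_s$ by reinjecting the propagated $\dot B^{-\sigma}_{2,\infty}$ control into a time-weighted Duhamel estimate on the linearized semigroup (which, on each Littlewood--Paley block, combines a parabolic factor $e^{-c 2^{2j}t}$ with a residual $\kappa$-dispersive oscillation). The base case $s\in(-\sigma,d/2-1]$ follows by interpolating the propagated $\dot B^{-\sigma}_{2,\infty}$ bound against the uniform $\dot B^{d/2-1}_{2,1}$ bound from Theorem \ref{thm2.1}; climbing to larger $s$ is carried out by induction, using previously obtained decays to control the nonlinearities at the next regularity level. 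For the incompressible component one writes $\cP u=v+(\cP u-v)$, inherits the required rate on $v$ from \eqref{incompressible decay}, and treats $\cP u-v$ as solving a heat equation with source supported in $(\nabla a,\cQ u)$, whose decay is already in hand. Finally, the embedding $\dot B^\alpha_{2,1}\hookrightarrow\dot H^\alpha$ and $\|\Lambda^\alpha f\|_{L^2}\sim\|f\|_{\dot H^\alpha}$ converts the Besov decay into the stated $\Lambda^\alpha L^2$ rate for every $|\alpha|>-\sigma$.

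The hard part I anticipate is controlling the third-order dispersive term $\sqrt{\kappa}\nabla\Delta a$ in the $\dot B^{-\sigma}_{2,\infty}$ estimate without losing the $\kappa$-uniformity already gained in Theorem \ref{thm2.1}: a pure parabolic-type bound would produce an unbounded $\sqrt{\kappa}$ prefactor, and the only way out is to exploit the conservative structure of the principal linear part in the symmetrized variables, keeping $\sqrt{\kappa}$ contributions strictly skew-adjoint. A secondary difficulty is the absence of smallness of the data: the entire closure of the time-weighted inequality must come from the time-integrability of the critical norms from Theorem \ref{thm2.1}, rather than from a conventional bootstrap, and the order in which the decay indices $s$ are upgraded must be chosen so that each nonlinear estimate sees only those norms whose decay has already been secured.
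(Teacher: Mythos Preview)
Your first step---propagation of the $\dot B^{-\sigma}_{2,\infty}$ norm via the block-wise symmetrized energy estimate, with the $\sqrt{\kappa}$ principal terms cancelling as a skew-adjoint pencil, followed by product estimates and Gronwall---is precisely what the paper does in Lemma~\ref{lem4.1}.

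Your second step, however, diverges from the paper and has a gap as written. The ``base case'' you describe, interpolating the propagated $\dot B^{-\sigma}_{2,\infty}$ bound against the uniform $\dot B^{d/2-1}_{2,1}$ bound, interpolates two \emph{bounded} quantities and therefore yields no decay; presumably you have a Lyapunov-type differential inequality in mind, but that is not what is stated. More substantially, the paper does \emph{not} climb in the regularity index by induction. It fixes $\alpha$, works with the single functional $\mathcal X_t=\sup_\tau\tau^{|\alpha|/2}\|D^\alpha(\tfrac{1}{\sqrt\kappa}a,\nabla a,u)\|_{\dot B^{-\sigma}_{2,\infty}}$, splits the Duhamel integral at $t/2$, and on $[t/2,t]$ closes the bootstrap through the \emph{dispersive} smallness already obtained in Theorem~\ref{thm2.1}: the Strichartz bound $\|(\nabla a,\mathcal Q u)\|_{\tilde L^2_tL^\infty}\lesssim\kappa^{-\delta}\mathcal D_t$ and, for the $T_v\nabla a$ interaction, a frequency splitting at threshold $2^{j-l}\sim\kappa^{\eta}$ produce coefficients $\kappa^{-\delta}$ and $\kappa^{-\delta/(\alpha-\sigma)}$ in front of $\mathcal X_t$, which are absorbed because $\kappa\geq\kappa_0$ is large. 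Your proposal to close purely via the $L^1_t$-tail smallness of the critical norms does not explain how the $v\cdot\nabla a$ term is handled: $v$ is large, its $L^2_tL^\infty$ norm carries no $\kappa^{-\delta}$, and the decay of $\|v\|_{L^\infty}$ alone is in general too slow (for small $\sigma$) to make the $[t/2,t]$ integral beat $t^{-|\alpha|/2}$. The paper's route sidesteps this by bringing the dispersive norm $\mathcal D_t$---which you invoked only in the existence proof---back into the decay argument.
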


\begin{rem}
The decay of solutions for heat equations or $\mathrm{(INS)}$ originated from series works of Schonbek \cite{S-CPDE,S-ARMA} and later developed by \cite{Lo,NS,W}. On the other hand, the decay of high order derivatives is also considered by \cite{S4,OT}. We remark that according to \cite{Lo}, one is able to prove (\ref{incompressible decay}) once $\cP u_{0}\in\dot{B}^{-\sigma}_{2,\infty}$.

Taking $\sigma=\frac{d}{2}$ ($L^1\hookrightarrow\dot{B}^{-\sigma}_{2,\infty} $), Theorem \ref{thm2.2} and Sobolev embedding reflect classical decay rates of any order derivatives as heat equations where
$$\|\Lambda^{\alpha}(\frac{1}{\sqrt{\kappa}}a,\nabla a, u)\|_{L^r}\lesssim t^{-\frac{d}{2}(1-\frac{1}{r})-\frac{\alpha}{2}},\quad r\geq2, t\geq1$$
with initial assumption arbitrary large.
\end{rem}

\begin{rem}
Results of Theorem \ref{thm2.1} and \ref{thm2.2} could also be obtained for those irrotational fluids. At this moment, the velocity could be written as a potential while the incompressible part disappears, and one may remove those assumptions concerns $\cP u_{0}$ in the main Theorems.
\end{rem}

\subsection{Strategy}
In order to understand the proof of Theorems \ref{thm2.1} and Theorems \ref{thm2.2} well, we make a formal spectral analysis to the linearized system of \eqref{linearized} which can be written in terms of
the divergence-free part $\mathcal{P}u$ and the compressible one $\mathcal{Q}u=(I-\mathcal{P})u$:
\begin{equation}
\left\{
\begin{array}{l}\partial_{t}a+\sqrt{\kappa}\mathrm{div} \mathcal{Q}u=f,\\ [1mm]
 \partial_{t}\mathcal{Q}u-2\Delta\mathcal{Q}u+\frac{1}{\sqrt{\kappa}}\nabla a-\sqrt{\kappa}\nabla\Delta a=\mathcal{Q}g,\\ [1mm]
 \partial_{t}\mathcal{P}u-\Delta\mathcal{P}u=\mathcal{P}g.
 \end{array} \right.\label{spectrum}
\end{equation}

It is clearly that the incompressible part $\mathcal{P}u$ just satisfies an ordinary heat equation. Regarding the compressible part $\mathcal{Q}u$, it is convenient to
introduce $\mathcal{V}\triangleq \Lambda^{-1}\div u$ where the new variable $(a, \mathcal{V})$ satisfies the coupling $2\times 2$ system:
\begin{equation}\label{R-E69}
\left\{\begin{array}{l}\d_ta+\sqrt{\kappa}\Lambda \mathcal{V}=0,\\[1ex]
\d_t\mathcal{V}-2\Delta \mathcal{V}-\frac{1}{\sqrt{\kappa}}\Lambda a-\sqrt{\kappa}\Lambda^{3} a=0.
\end{array}\right.
\end{equation}
Taking the Fourier transform with respect to $x\in \mathbb{R}^{d}$ leads to
\begin{equation}
\frac{d}{dt}\left(\begin{array}{c}
\hat{a} \\
\hat{\mathcal{V}} \\
\end{array}\right)
=A(\xi)\left(\begin{array}{c}
\hat{a} \\
\hat{\mathcal{V}} \\
\end{array}\right)
\quad \mbox{with}\quad A(\xi)=\left(
\begin{array}{cc}0 & -\sqrt{\kappa}|\xi| \\
\frac{1}{\sqrt{\kappa}}|\xi|+\sqrt{\kappa}|\xi|^3 & -2|\xi|^2 \\
\end{array}\right),
\end{equation}
where $\xi\in \mathbb{R}^{d}$ is the Fourier variable. It is not difficult to check that
$$\lambda_{\pm}=|\xi|^2\pm\sqrt{(1-\kappa)|\xi|^4-|\xi|^2}.$$
Consequently, it is found that the coupling system presents both dissipation and dispersion. In particular, while $\kappa>1$, the corresponding dispersive structure is closely linked with the so called the Gross-Patavskii equation which reads as
\begin{equation}\label{GP}
i\partial_{t}\psi+\Delta\psi-2\mathrm{Re}\psi=F(\psi).
\end{equation}
Based on dispersive estimates of Gross-Patavskii equations established in Gustafson, Nakanishi and Tsai \cite{GNT1,GNT2}, we would take not only optimal smooth effect of heat kernel (see $\phi_{1}$ in Figure 1), but also Strichartz estimates ($\phi_{2}$ in Figure 1) into considerations, which enables us to establish a dissipative-dispersive estimate in $\tilde{L}^r_{T}(\dot{B}^{s}_{p,q})$ space (see Lemma \ref{Strichartz estimates}), i.e. area $\Psi$ in Figure 1 and obtain some smallness in terms of sufficient large dispersion coefficient $\kappa$.

\begin{figure}[H]
\centering
\includegraphics[width=13.8cm]{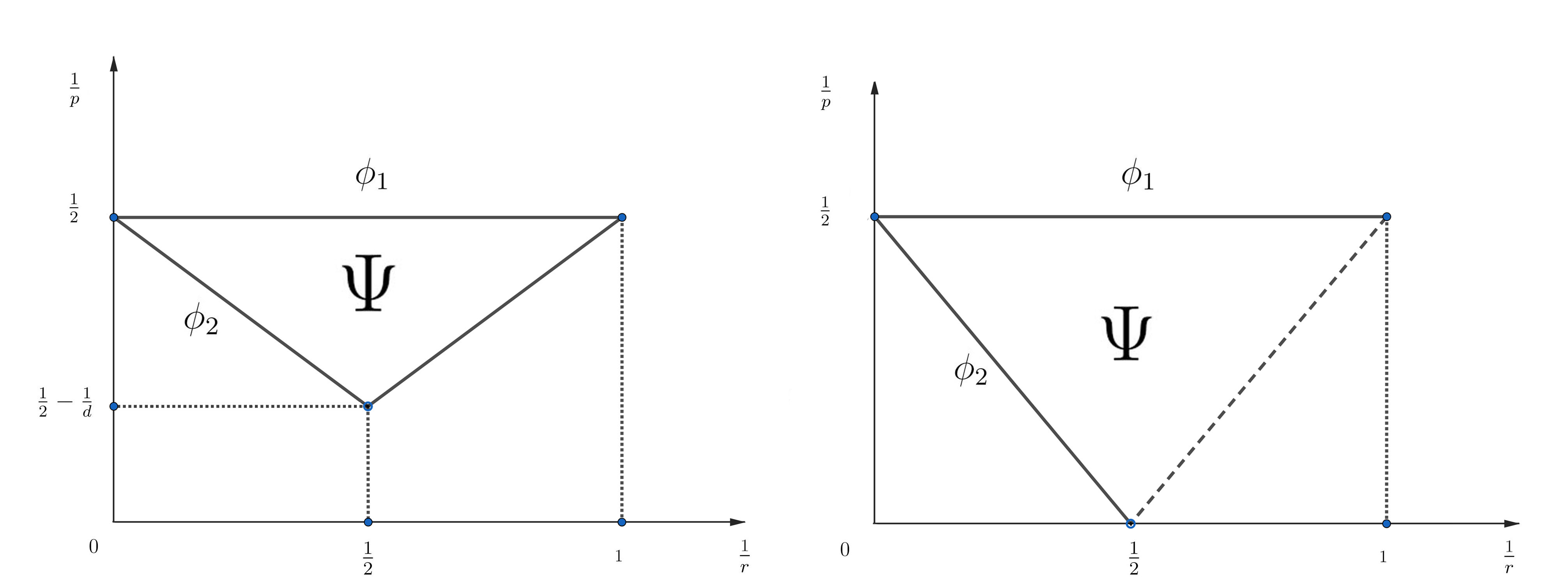}\\
\caption{$ d\geq3\qquad\qquad\qquad\qquad\qquad\qquad\qquad d=2 \qquad\qquad\qquad$}
\end{figure}

Based on the dissipative-dispersive estimates and uniform control of $v$, we are able to give the uniform estimates with initial data arbitrary large. Moreover, the incompressible limits are obtained according to different dimensions.

Our second result would focus on the decay rates. Observe the parabolic behavior of the perturbation system, we expect to establish decay for even high order derivatives. Different from the energy method \cite{KSX} or Gevrey method established in \cite{SX}, our decay derivation consists of two steps:
\begin{itemize}
\item Evolution of negative Besov norm $\|(a,u)\|_{\dot{B}^{-\sigma}_{2,\infty}}$;
\item Decay estimates for high order derivatives.
\end{itemize}

The main difficulties come from bounding nonlinearities since initial data doesn't indicates any smallness. The Strichartz estimates would again be our main tool to offer smallness in terms of $\kappa$ where we shall carefully deal with different frequency interactions.

Throughout the paper, $C>0$ stands for a generic ``constant". For brevity, $f\lesssim g$ means that $f\leq Cg$. It will also be understood that $\|(f,g)\|_{X}=\|f\|_{X}+\|g\|_{X}$ for all $f,g\in X$.

Finally, the rest of this paper unfolds as follows: in Section 3, we prove the global well-posedness and incompressible limits of solutions. Section 4 is devoted to giving the optimal decay estimates. In the last section (``Appendix"), we recall the classical Littlewood-Paley theory.

\section{The global well-posedness and incompressible limit}\setcounter{equation}{0}
We focus on the proof of Theorem \ref{thm2.1} in this section. Before the proof, we shall present some notations concern functional spaces:
\begin{eqnarray}\label{E}
\mathcal{E}_{T}\triangleq\|(\frac{1}{\sqrt{\kappa}}a,\nabla a, \cQ u)\|_{L^{\infty}_{T}(\dot{B}^{\frac{d}{2}-1}_{2,1})}
+\|(\frac{1}{\sqrt{\kappa}}a,\nabla a, \cQ u)\|_{L^1_{T}(\dot{B}^{\frac{d}{2}+1}_{2,1})};
\end{eqnarray}
\begin{eqnarray}\label{W}
\mathcal{W}_{T}\triangleq\|\cP u-v\|_{L^{\infty}_{T}(\dot{B}^{\frac{d}{2}-1}_{2,1})}+\|\cP u-v\|_{L^{1}_{T}(\dot{B}^{\frac{d}{2}+1}_{2,1})}.
\end{eqnarray}
Also, the following Strichartz space is defined:
\begin{eqnarray}\label{D}
\mathcal{D}_{T}\triangleq\kappa^{\delta}\|(\sqrt{\frac{\kappa^{-1}-\Delta}{-\Delta}}\nabla a,\mathcal{Q}u)\|_{\tilde{L}^{2}_{T}(\dot{B}^{\frac{d}{p}}_{p,1})},
\end{eqnarray}
here $(\delta, p)$ coincides with (\ref{pair}). Moreover, we shall define $\mathcal{V}_{t}$ to present functional space for solutions of (INS):
\begin{eqnarray}\label{V}
\mathcal{V}_{T}\triangleq\|v\|_{L^{\infty}_{T}(\dot{B}^{\frac{d}{2}-1}_{2,1})}+\|v\|_{L^{1}_{T}(\dot{B}^{\frac{d}{2}+1}_{2,1})}.
\end{eqnarray}

We shall first establish the priori estimates of $\mathcal{E}_{T},\mathcal{W}_{T},\mathcal{D}_{T}$ for the solution of (\ref{linearized})-(\ref{nonlinear}) under the critical regularity. Moreover, we show such a priori estimate is uniform in terms of arbitrary large initial data and $\mathcal{V}_{T}$ if the capillary effect is strong enough, i.e. $\kappa\geq\kappa_{0}$ for some $\kappa_{0}$ sufficient large. Then the well-posedness is ensured by those local results established in \cite{DD} and a bootstramp argument.

\subsection{{\bf Priori estimates}}
This subsection is devoted to proving the priori estimate. Without loss of generality, we always assume that $\kappa>1$. Our main result is to prove
\begin{prop}\label{uniform estimates}
Let $d\geq2$ and $\kappa>1$. Suppose that $(a,u)$ is the solution of system (\ref{1.1})-(\ref{1.2}). Then the following priori estimates holds true
\begin{multline}\label{1}
\mathcal{D}_{T}+\mathcal{E}_{T}\leq C\big( \|(\frac{1}{\sqrt{\kappa}}a_{0},\nabla a_{0}, \cQ u_{0})\|_{\dot{B}^{\frac{d}{2}-1}_{2,1}}+\mathcal{V}^{2}_{T}+\mathcal{W}^{2}_{T}+\kappa^{-\delta}(1+\mathcal{E}_{T})(\mathcal{D}_{T}+\mathcal{E}_{T})^2\big)
\end{multline}
while
\begin{eqnarray}\label{2}
\mathcal{W}_{T}\leq Ce^{\tilde{C}\mathcal{V}_{T}}\big(\mathcal{W}^{2}_{T}+\kappa^{-\frac{\delta}{2}}(1+\mathcal{E}_{T})(\mathcal{D}_{T}+\mathcal{E}_{T}+\mathcal{V}_{T}+\mathcal{W}_{T})^{2}\big).
\end{eqnarray}
\end{prop}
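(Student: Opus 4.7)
The plan is to bound $\mathcal{E}_T$, $\mathcal{D}_T$ and $\mathcal{W}_T$ separately, exploiting that $\mathcal{P}u$ satisfies a pure heat equation while $(a,\mathcal{Q}u)$ satisfies the coupled dissipative--dispersive system derived in \eqref{spectrum}--\eqref{R-E69}. First I would apply $\dot{\Delta}_j$ to the linearized coupling for $(a,\mathcal{V})$ with $\mathcal{V}=\Lambda^{-1}\mathrm{div}\,u$, and run two parallel block estimates. A hyperbolic/parabolic Lyapunov functional in the spirit of \cite{CDX}, combining $\|a_j/\sqrt{\kappa}\|_{L^2}$, $\|\nabla a_j\|_{L^2}$ and $\|\mathcal{V}_j\|_{L^2}$ with a cross term chosen to generate damping of $\nabla a$, will after time integration and $\ell^1$-summation in $j$ produce the parabolic bound $\mathcal{E}_T\lesssim \|(\tfrac{1}{\sqrt{\kappa}}a_0,\nabla a_0,\mathcal{Q}u_0)\|_{\dot B^{d/2-1}_{2,1}}+\|(f,\mathcal{Q}g)\|_{L^1_T(\dot B^{d/2-1}_{2,1})}$. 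In parallel, the Strichartz lemma (Lemma \ref{Strichartz estimates}) applied block-wise yields $\mathcal{D}_T\lesssim \|\text{data}\|_{\dot B^{d/2-1}_{2,1}}+\|(f,\mathcal{Q}g)\|_{L^1_T(\dot B^{d/2-1}_{2,1})}$ with a gain $\kappa^{-\delta}$ on the source contribution; summing the two bounds gives the left-hand side of \eqref{1}.

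Next, the nonlinearities $f$ and $g_1,\ldots,g_5$ are estimated via Bony's paraproduct calculus in critical Besov spaces, with composition estimates handling $\widetilde{Q},\widetilde{\mu},\widetilde{\lambda},\widetilde{G},\widetilde{\psi}$ applied to the argument $\kappa^{-1/2}a$, which lies in a small neighbourhood of $0$ thanks to the $\kappa^{-1/2}$ prefactor. To extract the $\kappa^{-\delta}$ smallness one places exactly one of the two factors of each quadratic term in the Strichartz norm $\mathcal{D}_T$ and the other in the parabolic norm controlled by $\mathcal{E}_T$; the product laws between $\dot B^{d/p}_{p,1}$ and $\dot B^{s}_{2,1}$ and their tilde time--space versions produce the cross term $\kappa^{-\delta}(1+\mathcal{E}_T)(\mathcal{D}_T+\mathcal{E}_T)^2$. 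The convective term $g_1=u\cdot\nabla u$ couples the compressible and incompressible parts: splitting $u=\mathcal{Q}u+(\mathcal{P}u-v)+v$ creates the $\mathcal{V}_T^2$ and $\mathcal{W}_T^2$ contributions appearing on the right-hand side of \eqref{1}, together with mixed terms that are absorbed into the $\kappa^{-\delta}$-cross term.

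For \eqref{2}, set $w:=\mathcal{P}u-v$. Using $\mathrm{div}\,v=0$, one computes
\begin{equation*}
\partial_t w-\Delta w+\mathcal{P}(v\cdot\nabla w)=-\mathcal{P}(w\cdot\nabla u)-\mathcal{P}(\mathcal{Q}u\cdot\nabla u)-\mathcal{P}(v\cdot\nabla \mathcal{Q}u)+\mathcal{P}(g_2+g_3+g_4+g_5).
\end{equation*}
Maximal $L^1_T(\dot B^{d/2+1}_{2,1})$ regularity for the heat equation with the transport perturbation $v\cdot\nabla$, treated as in Danchin \cite{D}, produces the Gronwall prefactor $e^{\widetilde{C}\mathcal{V}_T}$. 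The remaining forcings are estimated as above but, crucially, in each quadratic product only one factor is traded against the Strichartz norm (no derivative room remains to sacrifice a second factor), which accounts for the weaker exponent $\kappa^{-\delta/2}$ in place of $\kappa^{-\delta}$.

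The main obstacle will be $g_4=\sqrt{\kappa}\nabla(\widetilde{\psi}(\kappa^{-1/2}a)\Delta a)$, which carries a bare $\sqrt{\kappa}$ and is of third order in $a$. The cure is to use $\widetilde{\psi}(0)=0$ and write $\widetilde{\psi}(\kappa^{-1/2}a)=\kappa^{-1/2}a\int_0^1\widetilde{\psi}'(s\kappa^{-1/2}a)\,ds$, so the explicit $\sqrt{\kappa}$ cancels and one is left with a trilinear expression in $a$; placing one factor of $\nabla a$ in the Strichartz norm (which, via the weight $\sqrt{(\kappa^{-1}-\Delta)/(-\Delta)}$, is equivalent to $\|\nabla a\|_{\tilde L^2_T(\dot B^{d/p}_{p,1})}$ at medium and high frequencies) supplies the $\kappa^{-\delta}$. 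Low frequencies, where the weight degenerates, must be handled by a separate frequency splitting using the fact that $a/\sqrt{\kappa}$ (rather than $a$ itself) is what enters $\mathcal{E}_T$; reconciling these two scalings throughout the nonlinear bookkeeping is the technical heart of the argument.
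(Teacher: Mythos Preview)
Your overall architecture---Lyapunov energy for $\mathcal E_T$, Strichartz for $\mathcal D_T$, heat-with-drift plus Gronwall for $\mathcal W_T$, and paraproduct bookkeeping for the nonlinearities---matches the paper. The gap is in your treatment of $g_4=\sqrt{\kappa}\,\nabla\bigl(\tilde\psi(\kappa^{-1/2}a)\Delta a\bigr)$ and, symmetrically, of the term $-\sqrt{\kappa}\,\tilde\psi(\kappa^{-1/2}a)\,\mathrm{div}\,u$ hidden in $f$ (which you do not single out). Your Taylor expansion $\tilde\psi(\kappa^{-1/2}a)=\kappa^{-1/2}a\int_0^1\tilde\psi'(\cdot)\,ds$ does cancel the explicit $\sqrt{\kappa}$, but it leaves you with $\nabla(a\,\Delta a)$, which has the \emph{same} derivative count as the linear Korteweg term. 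To bound $\|a\,\Delta a\|_{L^1_T(\dot B^{d/2}_{2,1})}$ with a $\kappa^{-\delta}$ gain you would need one factor in $L^2_T(\dot B^{d/2+1}_{2,1})$, which is one derivative beyond what either $\mathcal E_T$ (in $L^2_T$) or $\mathcal D_T$ controls; the only closing estimate is $\|\nabla a\|_{L^\infty_T(\dot B^{d/2-1}_{2,1})}\|\nabla a\|_{L^1_T(\dot B^{d/2+1}_{2,1})}\lesssim\mathcal E_T^2$ with \emph{no} $\kappa$-smallness. The same obstruction occurs for $\|a\,\mathrm{div}\,u\|_{L^1_T(\dot B^{d/2}_{2,1})}$ coming from $\nabla f$. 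A bare $\mathcal E_T^2$ on the right of \eqref{1} destroys the large-data bootstrap.

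The paper's fix is structural, not algebraic: these two $\sqrt{\kappa}$-terms are kept \emph{inside} the localized energy identity and paired. Writing the commutator $[\dot\Delta_j,\tilde\psi(\kappa^{-1/2}a)]$, one has
\[
-\sqrt{\kappa}\!\int\!\dot\Delta_j\nabla(\tilde\psi\,\mathrm{div}\,u)\cdot\nabla a_j\,dx
\;+\;\sqrt{\kappa}\!\int\!\dot\Delta_j\nabla(\tilde\psi\,\Delta a)\cdot\mathcal Q u_j\,dx,
\]
and after stripping off the commutators the principal contributions $\pm\sqrt{\kappa}\int\tilde\psi\,\mathrm{div}\,\mathcal Q u_j\,\Delta a_j\,dx$ cancel exactly. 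What survives is $\sqrt{\kappa}\,2^{j}\|[\dot\Delta_j,\tilde\psi(\kappa^{-1/2}a)](\mathrm{div}\,u+\Delta a)\|_{L^2}$; the commutator gains one derivative and $\nabla\tilde\psi(\kappa^{-1/2}a)=\kappa^{-1/2}\tilde\psi'\,\nabla a$ absorbs the $\sqrt{\kappa}$, so one may place $\|\nabla a\|_{L^2_TL^\infty}\lesssim\kappa^{-\delta}\mathcal D_T$ via Strichartz and close with $\kappa^{-\delta}\mathcal D_T\mathcal E_T$. This symmetrization---flagged in the paper's Remark~2.1 as the reason for the choice of perturbation variable $a=\sqrt{\kappa}\,\mathcal L(\rho)$---is the missing idea; your low/high-frequency splitting does not repair the derivative deficit.
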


\begin{proof}
We begin with establishing the linear estimates by bounding the dissipative-dispersive coupling system and incompressible part respectively.

{\it Step1: \underline{Dissipative estimates in energy framework for $(a,\mathcal{Q}u)$}.}

The coupling system of $(a,\mathcal{Q}u)$ could be written as
\begin{equation}
\left\{
\begin{array}{l}\partial_{t}a+\sqrt{\kappa}\mathrm{div} \mathcal{Q}u=f,\\ [1mm]
 \partial_{t}\mathcal{Q}u-2\Delta u+\frac{1}{\sqrt{\kappa}}\nabla a-\sqrt{\kappa}\nabla\Delta a=\mathcal{Q}g.
 \end{array} \right.\label{spectrum}
\end{equation}
Imposing the gradient on the continuity equation, then Fourier localization leads us to
\begin{equation}
\left\{
\begin{array}{l}\partial_{t}\ddj \nabla a+\sqrt{\kappa}\ddj\Delta\mathcal{Q}u=\ddj\nabla f,\\ [1mm]
 \partial_{t}\ddj \mathcal{Q}u-2\ddj\Delta u+\frac{1}{\sqrt{\kappa}}\ddj\nabla a-\sqrt{\kappa}\ddj\nabla\Delta a=\ddj\mathcal{Q} g.
 \end{array} \right.\label{spectrum}
\end{equation}
Denote $(a_{j},u_{j})=(\ddj a,\ddj u)$ and $(f_{j},g_{j})=(\ddj f,\ddj g)$, $L^2$ inner product yields
\begin{eqnarray}\label{energy1}
\frac{1}{2}\frac{d}{dt}\|\nabla a_{j}\|^{2}_{L^{2}}+\sqrt{\kappa}\int_{\mathbb{R}^{d}}\Delta\mathcal{Q}u_{j}\nabla a_{j}dx=\int_{\mathbb{R}^{d}}\nabla f_{j}\nabla a_{j}dx.
\end{eqnarray}
\begin{multline}
\frac{1}{2}\frac{d}{dt}\|\mathcal{Q}u_{j}\|^{2}_{L^{2}}+2\|\nabla\mathcal{Q}u_{j}\|^{2}_{L^{2}}+\frac{1}{\sqrt{\kappa}}\int_{\mathbb{R}^{d}}\nabla a_{j}\mathcal{Q}u_{j}dx\\-\sqrt{\kappa}\int_{\mathbb{R}^{d}}\nabla\Delta a_{j}\mathcal{Q}u_{j}dx
=\int_{\mathbb{R}^{d}} g_{j} \mathcal{Q}u_{j}dx.
\end{multline}
\begin{multline}
\kappa^{-\frac{1}{2}}\frac{1}{2}\frac{d}{dt}\int_{\mathbb{R}^{d}}\mathcal{Q}u_{j}\nabla a_{j}dx-\|\nabla\mathcal{Q}u_{j}\|^{2}_{L^{2}}-\frac{2}{\sqrt{\kappa}}\int_{\mathbb{R}^{d}}\mathcal{Q}u_{j}\nabla a_{j}dx+\kappa^{-1}\| \nabla a_{j}\|^{2}_{L^{2}}+\|\Delta a_{j}\|^{2}_{L^{2}}\\
=\kappa^{-\frac{1}{2}}\int_{\mathbb{R}^{d}}\nabla f_{j}\mathcal{Q}u_{j}dx+\kappa^{-\frac{1}{2}}\int_{\mathbb{R}^{d}} g_{j}\nabla a_{j}dx.
\end{multline}
Now to deal with quasi-linear nonlinearity $-\sqrt{\kappa}\tilde{\psi}(\kappa^{-\frac{1}{2}}a)\mathrm{div}u$ in $f$ and $\sqrt{\kappa}\nabla(\tilde{\psi}(\kappa^{-\frac{1}{2}}a)\Delta a)$ in $g$, we have
\begin{multline*}
-\int_{\mathbb{R}^{d}}\ddj\nabla(\tilde{\psi}(\kappa^{-\frac{1}{2}}a)\mathrm{div}u)\nabla a_{j}dx=\int_{\mathbb{R}^{d}} [\ddj,\tilde{\psi}(\kappa^{-\frac{1}{2}}a)]\mathrm{div}u\Delta a_{j}dx\\+\int_{\mathbb{R}^{d}} \tilde{\psi}(\kappa^{-\frac{1}{2}}a)\mathrm{div}\mathcal{Q}u_{j}\Delta a_{j}dx
\end{multline*}
while it holds
\begin{multline*}
\int_{\mathbb{R}^{d}}\ddj\nabla(\tilde{\psi}(\kappa^{-\frac{1}{2}}a)\Delta a)\mathcal{Q}u_{j}dx=-\int_{\mathbb{R}^{d}} [\ddj,\tilde{\psi}(\kappa^{-\frac{1}{2}}a)]\Delta a\mathrm{div}\cQ u_{j}dx\\-\int_{\mathbb{R}^{d}} \tilde{\psi}(\kappa^{-\frac{1}{2}}a)\mathrm{div}\Delta a_{j}\mathcal{Q}u_{j}dx.
\end{multline*}
Notice that it also holds
\begin{eqnarray}\label{energy2}
\frac{1}{2\kappa}\frac{d}{dt}\|a_{j}\|^{2}_{L^{2}}+\frac{1}{\sqrt{\kappa}}\int_{\mathbb{R}^{d}}\mathrm{div}\mathcal{Q}u_{j}a_{j}dx=-\frac{1}{\kappa}\int_{\mathbb{R}^{d}} f_{j}a_{j}dx.
\end{eqnarray}
We immediately deduced from (\ref{energy1})-(\ref{energy2}) that
\begin{multline}\label{localization estimates}
\frac{d}{dt}\|(\frac{1}{\sqrt{\kappa}}a_{j},2^{j}a_{j},u_{j})\|_{L^{2}}+c2^{2j}\|(\frac{1}{\sqrt{\kappa}}a_{j},2^{j}a_{j},u_{j})\|_{L^{2}}\leq \|(\frac{1}{\sqrt{\kappa}}f_{j},g_{j}-g_{4,j})\|_{L^{2}}\\
+2^{j}\|\ddj(u\cdot\nabla a)\|_{L^{2}}+2^{j}\|[\ddj,\tilde{\psi}(\kappa^{-\frac{1}{2}}a)](\mathrm{div}u+\Delta a)\|_{L^{2}}.
\end{multline}

Hence taking $\dot{B}^{\frac{d}{2}-1}_{2,1}$ norm and integral on $t$ lead us to
\begin{multline}\label{dissipation}
\|(\frac{1}{\sqrt{\kappa}}a,\nabla a, \cQ u)\|_{L^{\infty}_{T}(\dot{B}^{\frac{d}{2}-1}_{2,1})\cap L^1_{T}(\dot{B}^{\frac{d}{2}+1}_{2,1})}
\lesssim \|(\frac{1}{\sqrt{\kappa}}a_{0},\nabla a_{0}, \cQ u_{0})\|_{\dot{B}^{\frac{d}{2}-1}_{2,1}}+\|u\cdot\nabla a\|_{L^{1}_{T}(\dot{B}^{\frac{d}{2}}_{2,1})}\\
+\|(\frac{1}{\sqrt{\kappa}}f,\cQ g-\cQ g_{4})\|_{L^{1}_{T}(\dot{B}^{\frac{d}{2}-1}_{2,1})}
+\sqrt{\kappa}\int^{T}_{0}\|[\ddj,\tilde{\psi}(\kappa^{-\frac{1}{2}}a)](\mathrm{div}u+\Delta a)\|_{\dot{B}^{\frac{d}{2}}_{2,1}}ds.
\end{multline}

{\it Step2: \underline{Dissipative-dispersive estimates for $(a,\mathcal{Q}u)$}.}

Secondly we shall establish dissipative-dispersive estimates of $(a,\mathcal{Q}u)$ where one could find some smallness in $\tilde{L}^{r}_{T}(\dot{B}^{s}_{p,1})$ ($1<r<\infty, 2<p<\infty$) space when dispersive coefficient $\kappa$ is large. Inspired by \cite{GNT1}, we denote $H\triangleq\sqrt{-\Delta(\kappa^{-1}-\Delta)}$, $U\triangleq\sqrt{\frac{-\Delta}{\kappa^{-1}-\Delta}}$ and define $z\triangleq U^{-1}\nabla a+i\cQ u$, then (\ref{spectrum}) could be rewritten as
$$i\partial_{t}z-2i\Delta z-\sqrt{\kappa}Hz=-2iH\nabla a+iU^{-1}\nabla f-g.$$
Then Duhamel formula yields
\begin{eqnarray}\label{Duhamel}
z(t)=e^{i\sqrt{\kappa} Ht}e^{\Delta t}z_{0}+\int^{t}_{0}e^{i\sqrt{\kappa} H(t-s)}e^{\Delta (t-s)}(-2iH \nabla a+iU^{-1}\nabla f-g)(s)ds.
\end{eqnarray}
where $z_{0}=U^{-1}\nabla a_{0}+i\cQ u_{0}$. The above equation defines a dissipative-dispersive semi-group $e^{i\sqrt{\kappa} Ht}e^{\Delta t}$ and we would establish corresponding Strichartz type estimate containing smooth effect under dissipation. Let us first state the following Proposition concern the corresponding semi-group estimate under localization:
\begin{prop}\label{dispersive estimates}
Set $\kappa>0$. Let $p\in[2,\infty]$ and $1=\frac{1}{p}+\frac{1}{p'}$. For any distribution $f$, there holds for $t>0$
\begin{eqnarray}\label{infty}
\|e^{i\sqrt{\kappa} Ht}e^{\Delta t}f_{j}\|_{L^p}\lesssim  (\sqrt{\kappa} t)^{\frac{d}{p}-\frac{d}{2}}e^{-2^{2j}t}\|f_{j}\|_{L^{p'}}.
\end{eqnarray}
\end{prop}

Above Proposition is obtained by dispersive estimates in terms of parameter $\kappa$, see Theorem 2.1 in \cite{GNT1,GNT2} and heat kernel estimates under Fourier localization. Since it is
fundamental for our analysis, a sketch of it is given in Appendix A. Based on above proposition, we state the following Strichartz type estimates in Besov space which shall be frequently applied in this paper:
\begin{lem}\label{Strichartz estimates}
Set $\kappa>0$, $q,r\in[1,\infty]$, $p\in[2,\infty]$ and $s\in \mathbb{R}$. Let $(r,p)$ satisfies the condition:
\begin{equation}\label{admissible}
\left\{
\begin{array}{l}\frac{d}{2}-\frac{d}{p}\leq\frac{2}{r}\leq\frac{d}{p}-\frac{d}{2}+2,\quad d\geq3\\ [1mm]
1-\frac{2}{p}\leq\frac{2}{r}<\frac{2}{p}+1,\quad \quad \quad d=2.
 \end{array} \right.
\end{equation}
For any $k$ satisfies $k=\frac{2}{r}+\frac{d}{p}-\frac{d}{2}$, there holds
\begin{eqnarray}\label{infty}
\|e^{i\sqrt{\kappa} Ht}e^{\Delta t}u\|_{\tilde{L}^{r}_{T}(\dot{B}^{s+k}_{p,q})}\lesssim  \kappa^{\frac{1}{4}(k-\frac{2}{r})}\|u\|_{\dot{B}^{s}_{2,q}};
\end{eqnarray}
\begin{eqnarray}\label{infty2}
\big\|\int^{t}_{0}e^{i\sqrt{\kappa} H(t-s)}e^{\Delta (t-s)}f(s)ds\big\|_{\tilde{L}^{r}_{T}(\dot{B}^{s+k}_{p,q})}
\lesssim  \kappa^{\frac{1}{4}(k-\frac{2}{r})}\|f\|_{\tilde{L}^1_{T}(\dot{B}^{s}_{2,q})}.
\end{eqnarray}
\end{lem}

\begin{proof}
In the case $p=2$, the optimal smoothing effect of heat kernels, see for example \cite{D}, allows us to have
\begin{eqnarray}
\|e^{i\sqrt{\kappa} Ht}e^{\Delta t}u\|_{\tilde{L}^{r}_{T}(\dot{B}^{s+\frac{2}{r}}_{2,q})}\lesssim\|u\|_{\dot{B}^{s}_{2,q}};
\end{eqnarray}
\begin{eqnarray}
\|\int^{t}_{0}e^{i\sqrt{\kappa} H(t-s)}e^{\Delta (t-s)}f(s)ds\|_{\tilde{L}^{r}_{T}(\dot{B}^{s+\frac{2}{r}}_{2,q})}\lesssim\|u\|_{\tilde{L}^1_{T}(\dot{B}^{s}_{2,q})}.
\end{eqnarray}
As for case $p>2$, we start from $d\leq3$. We begin from $\frac{d}{2}-\frac{d}{p}=\frac{2}{r}$, which corresponds to classical Schr$\ddot{o}$dinger admissible pair. By $TT^*$ argument, it is enough to prove that
$$\|J f_{j}\|_{L^r L^p}\lesssim \kappa^{-\frac{1}{2r}} \|f_{j}\|_{L^{r'} L^{p'}}$$
where
$$Jf_{j}\triangleq\int^{t}_{0} e^{i\sqrt{\kappa} H(s-\sigma)}e^{\Delta(2t-s-\sigma)}f_{j}(\sigma)d\sigma.$$
By Proposition \ref{dispersive estimates}, there holds for $\frac{d}{2}-\frac{d}{p}=\frac{2}{r}$ such that
\begin{eqnarray}
\|Jf_{j}\|_{L^r L^p}&\lesssim&  \big\||\sqrt{\kappa} (t-s)|^{\frac{d}{p}-\frac{d}{2}}e^{-2^{2j}(t-s)}\|f_{j}\|_{L^{p'}}\big\|_{L^r}\\
\nonumber&\lesssim& \kappa^{\frac{d}{2p}-\frac{d}{4}}\big\| t^{\frac{d}{p}-\frac{d}{2}}\|f_{j}\|_{L^{p'}}\big\|_{L^r}.
\end{eqnarray}
Since that $\frac{2}{r}+\frac{d}{p}=\frac{d}{2}$, by Hardy-Littlewood-Sobolev inequality, we have
\begin{eqnarray}
\|Jf_{j}\|_{L^r L^p}\lesssim \kappa^{-\frac{1}{r}}\|f_{j}\|_{L^{r'}L^{p'}}.
\end{eqnarray}
Then a duality argument allows us to have (\ref{infty2}). For case $\frac{d}{2}-\frac{d}{p}<\frac{2}{r}$, we shall utilize the following interpolation
$$\tilde{L}^{r}_{T}(\dot{B}^{s+k}_{p,q})=\Big(\tilde{L}^{r_{1}}_{T}(\dot{B}^{s}_{p_{1},q}),\tilde{L}^{r_{2}}_{T}(\dot{B}^{s+\frac{2}{r_{2}}}_{2,q})\Big)_{\theta}$$
provided
$$\frac{1}{r}=\frac{\theta}{r_{1}}+\frac{1-\theta}{r_{2}};\quad k=\frac{2(1-\theta)}{r_{2}};\quad \frac{1}{p}=\frac{\theta}{p_{1}}+\frac{1-\theta}{2}.$$
Hence, keep in mind $r_{1}\geq2$ and $r_{2}\geq1$, there holds
$$\frac{2}{r}-\frac{d}{p}+\frac{d}{2}-1\leq\frac{4\theta}{r_{1}}+\frac{2(1-\theta)}{r_{2}}-2\leq0,$$
then the interpolation leads us to for $\frac{2}{r}\leq\frac{d}{p}-\frac{d}{2}+2$
\begin{eqnarray}
\|e^{i\sqrt{\kappa} Ht}e^{\Delta t}u\|_{\tilde{L}^{r}_{T}(\dot{B}^{s+k}_{p,q})}\lesssim  \kappa^{-\frac{\theta}{2r_{1}}}\|u\|_{\dot{B}^{s}_{2,q}}
\lesssim  \kappa^{\frac{1}{4}(k-\frac{2}{r})}\|u\|_{\dot{B}^{s}_{2,q}}.
\end{eqnarray}
Similarly we could get to (\ref{infty2}).

As for case $d=2$, we follow the similar calculations as $d\geq3$ but notice critical admissible pair $(r,p,k)=(2,\infty,0)$ is excluded.
\end{proof}

According to Lemma \ref{Strichartz estimates}, we have for $(\delta,p)$ satisfies (\ref{pair}) such that
\begin{eqnarray}
\|z\|_{\tilde{L}^{2}_{T}(\dot{B}^{\frac{d}{p}}_{p,1})}\lesssim \kappa^{-\delta}\big(\|z_{0}\|_{\dot{B}^{\frac{d}{2}-1}_{2,1}}
+\|H\nabla a\|_{L^{1}_{T}(\dot{B}^{\frac{d}{2}-1}_{2,1})}+\|(U^{-1}\nabla f, g)\|_{L^{1}_{T}(\dot{B}^{\frac{d}{2}-1}_{2,1})}\big).
\end{eqnarray}
Notice that
$$\|H\nabla a\|_{L^{1}_{T}(\dot{B}^{\frac{d}{2}-1}_{2,1})}\lesssim\|(\frac{1}{\sqrt{\kappa}} a,\nabla a)\|_{L^{1}_{T}(\dot{B}^{\frac{d}{2}+1}_{2,1})};$$
$$\|U^{-1}\nabla f\|_{L^{1}_{T}(\dot{B}^{\frac{d}{2}-1}_{2,1})}\lesssim\|(\frac{1}{\sqrt{\kappa}}f,\nabla f)\|_{L^{1}_{T}(\dot{B}^{\frac{d}{2}-1}_{2,1})},$$
hence, we could conclude with
\begin{eqnarray}\label{dispersion}
\|z\|_{\tilde{L}^{2}_{T}(\dot{B}^{\frac{d}{p}}_{p,1})}\lesssim \kappa^{-\delta}\big(\|z_{0}\|_{\dot{B}^{\frac{d}{2}-1}_{2,1}}
+\mathcal{E}_{T}+\|(\frac{1}{\sqrt{\kappa}}f,\nabla f, g)\|_{L^{1}_{T}(\dot{B}^{\frac{d}{2}-1}_{2,1})}\big).
\end{eqnarray}

{\it Step3: \underline{Estimates for incompressible part $\cP u$}}

The incompressible estimates follow similar calculations in \cite{D2}. In fact, by Leray projector, the system (INS) could be rewritten as
\begin{equation}
\left\{
\begin{array}{l}\partial_{t}v-\Delta v=\mathcal{P}(v\cdot\nabla v),\\ [1mm]
v|_{t=0}=\mathcal{P}u_{0},
 \end{array} \right.\label{incompressible0}
\end{equation}
while the incompressible part of $\cP u$ fulfills
$$\partial_{t}\mathcal{P}u-\Delta\mathcal{P}u=\mathcal{P}(g_{1}+g_{2})$$
where we take advantages of the fact that for any potential function $w=\nabla f$, there holds $\cP w=0$. Now define $\tilde{v}=\mathcal{P}u-v$, the error of the incompressible part of fluids satisfies
\begin{eqnarray}
\partial_{t}\tilde{v}-\Delta\tilde{v}=\mathcal{P}(\tilde{v}\cdot\nabla\tilde{v})+\mathcal{P}(v\cdot\nabla\tilde{v})+\mathcal{P}(\tilde{v}\cdot\nabla v)+\mathcal{P}\bar{g}
\end{eqnarray}
where $\bar{g}=\cQ u\cdot\nabla\cQ u+\cQ u\cdot\nabla\cP u+\cP u\cdot\nabla\cQ u+g_{2}$.
Then by localization and do inner product with $\tilde{v}_{j}$, we reach
\begin{multline*}
\frac{1}{2}\frac{d}{dt}\|\tilde{v}_{j}\|^{2}_{L^{2}}+c2^{2j}\|\tilde{v}_{j}\|^{2}_{L^{2}}\lesssim\int_{\mathbb{R}^{d}}[\cP\ddj,v\cdot\nabla]\tilde{v}\tilde{v}_{j}dx
+\int_{\mathbb{R}^{d}}v\cdot\nabla \tilde{v}_{j}\tilde{v}_{j}dx\\
+\|\tilde{v}_{j}\|_{L^{2}}(\|\mathcal{P}(v\cdot\nabla\tilde{v})\|_{L^2}+\|\mathcal{P}(\tilde{v}\cdot\nabla\tilde{v})\|_{L^2}+\|\mathcal{P}\bar{g}_{j}\|_{L^2}).
\end{multline*}
Taking advantage of the commutator estimates
\begin{eqnarray*}
\|[\cP\ddj,v\cdot\nabla]\tilde{v}\|_{L^{2}}\lesssim\|\nabla v\|_{\dot{B}^{\frac{d}{2}+1}_{2,1}}\|\tilde{v}_{j}\|_{L^2},
\end{eqnarray*}
and notice
\begin{eqnarray*}
\int_{\mathbb{R}^{d}}v\cdot\nabla \tilde{v}_{j}\tilde{v}_{j}dx\lesssim\|\nabla v\|_{\dot{B}^{\frac{d}{2}+1}_{2,1}}\|\tilde{v}_{j}\|_{L^2},
\end{eqnarray*}
then by maximal regularity of heat equation, there holds
\begin{eqnarray*}
\|\tilde{v}\|_{L^{\infty}_{T}(\dot{B}^{\frac{d}{2}-1}_{2,1})\cap L^1_{T}(\dot{B}^{\frac{d}{2}+1}_{2,1})}
\lesssim\int^{T}_{0}\|v\|_{\dot{B}^{\frac{d}{2}+1}_{2,1}}\|\tilde{v}\|_{\dot{B}^{\frac{d}{2}-1}_{2,1}}ds
+\|\mathcal{P}(\tilde{v}\cdot\nabla\tilde{v})+\mathcal{P}\bar{g}\|_{L^{1}_{T}(\dot{B}^{\frac{d}{2}-1}_{2,1})}.
\end{eqnarray*}
Hence, the Gronwall inequality leads to
\begin{eqnarray}\label{dissipation2}
\|\tilde{v}\|_{L^{\infty}_{T}(\dot{B}^{\frac{d}{2}-1}_{2,1})\cap L^1_{T}(\dot{B}^{\frac{d}{2}+1}_{2,1})}
\lesssim e^{\tilde{C}\mathcal{V}_{T}}\|\mathcal{P}(\tilde{v}\cdot\nabla\tilde{v})+\mathcal{P}\bar{g}\|_{L^{1}_{T}(\dot{B}^{\frac{d}{2}-1}_{2,1})}
\end{eqnarray}
where $\tilde{C}$ is a positive constant. Now by combining (\ref{dissipation}), (\ref{dispersion}) with (\ref{dissipation2}), one could always find a positive $\bar{\kappa}$ such that for all $\kappa\geq\bar{\kappa}$, we arrive at the linear estimates such that
\begin{multline}\label{linear1}
\mathcal{D}_{T}+\mathcal{E}_{T}\leq C\big( \|(\frac{1}{\sqrt{\kappa}}a_{0},\nabla a_{0}, \cQ u_{0})\|_{\dot{B}^{\frac{d}{2}-1}_{2,1}}
+\|(\frac{1}{\sqrt{\kappa}}f,\cQ g-\cQ g_{4})\|_{L^{1}_{T}(\dot{B}^{\frac{d}{2}-1}_{2,1})}\\
+\|u\cdot\nabla a\|_{L^{1}_{T}(\dot{B}^{\frac{d}{2}}_{2,1})}+\sqrt{\kappa}\int^{T}_{0}\|[\ddj,\tilde{\psi}(\kappa^{-\frac{1}{2}}a)](\mathrm{div}u+\Delta a)\|_{\dot{B}^{\frac{d}{2}}_{2,1}}ds\big)
\end{multline}
and
\begin{eqnarray}\label{linear2}
\mathcal{W}_{T}\leq Ce^{\tilde{C}\mathcal{V}_{T}}\|\mathcal{P}(\tilde{v}\cdot\nabla\tilde{v})+\mathcal{P}\bar{g}\|_{L^{1}_{T}(\dot{B}^{\frac{d}{2}-1}_{2,1})}.
\end{eqnarray}

{\it Step4: \underline{Nonlinear estimates}}

So what left is to give nonlinear estimates. We begin with nonlinearities in (\ref{linear1}). Bounding $\|\frac{1}{\sqrt{\kappa}}f\|_{L^{1}_{T}(\dot{B}^{\frac{d}{2}-1}_{2,1})}$ is obtained by
\begin{eqnarray*}
\|\tilde{\psi}(\kappa^{-\frac{1}{2}}a)\mathrm{div}u\|_{L^{1}_{T}(\dot{B}^{\frac{d}{2}}_{2,1})}
\lesssim\frac{1}{\sqrt{\kappa}}\|a\|_{L^{\infty}_{T}L^\infty}\|\mathrm{div}\cQ u\|_{L^{1}_{T}(\dot{B}^{\frac{d}{2}}_{2,1})}\lesssim\kappa^{-\frac{1}{2}}\mathcal{E}^{2}_{T};
\end{eqnarray*}
\begin{eqnarray*}
\frac{1}{\sqrt{\kappa}}\|u\cdot \nabla a\|_{L^{1}_{T}(\dot{B}^{\frac{d}{2}}_{2,1})}\lesssim\frac{1}{\sqrt{\kappa}}\|u\|_{L^{2}_{T}L^\infty}\|\nabla a\|_{L^{2}_{T}(\dot{B}^{\frac{d}{2}}_{2,1})}
\lesssim\kappa^{-\frac{1}{2}}\mathcal{E}_{T}(\mathcal{E}_{T}+\mathcal{W}_{T}+\mathcal{V}_{T}),
\end{eqnarray*}
which leads to
\begin{eqnarray}
\|\frac{1}{\sqrt{\kappa}}f\|_{L^{1}_{T}(\dot{B}^{\frac{d}{2}-1}_{2,1})}\lesssim\kappa^{-\frac{1}{2}}(\mathcal{E}^{2}_{T}+\mathcal{W}^{2}_{T}+\mathcal{V}^{2}_{T}).
\end{eqnarray}
On the other hand, for $\|u\cdot\nabla a\|_{L^{1}_{T}(\dot{B}^{\frac{d}{2}}_{2,1})}$, we immediately have
\begin{eqnarray*}
\|\cQ u\cdot \nabla a\|_{L^{1}_{T}(\dot{B}^{\frac{d}{2}}_{2,1})}\lesssim\|\nabla a\|_{L^{2}_{T}L^\infty}\|\cQ u\|_{L^{2}_{T}(\dot{B}^{\frac{d}{2}}_{2,1})}
+\|\cQ u\|_{L^{2}_{T}L^\infty}\|\nabla a\|_{L^{2}_{T}(\dot{B}^{\frac{d}{2}}_{2,1})}.
\end{eqnarray*}
Since that there holds
\begin{eqnarray}\label{infty}
\|(\nabla a,\cQ u)\|_{L^{2}_{T}L^\infty}\lesssim\|(U^{-1}\mathrm{Re}z,\mathrm{Im}z)\|_{L^{2}_{T}(\dot{B}^{\frac{d}{p}}_{p,1})}\lesssim\kappa^{-\delta}\mathcal{D}_{T},
\end{eqnarray}
provided $p$ satisfies (\ref{pair}), we obtain
\begin{eqnarray}
\|\cQ u\cdot \nabla a\|_{L^{1}_{T}(\dot{B}^{\frac{d}{2}}_{2,1})}\lesssim\kappa^{-\delta}\mathcal{D}_{T}\mathcal{E}_{T}.
\end{eqnarray}
To deal with interaction of $\cP u$, we use the Bony decomposition
$$\cP u\cdot \nabla a=T_{\cP u}\nabla a+R(\cP u,\nabla a)+T_{\nabla a}\cP u.$$
We have the following estimate such that
\begin{eqnarray*}
\|T_{\cP u}\nabla a\|_{L^{1}_{T}(\dot{B}^{\frac{d}{2}}_{2,1})}
\lesssim\|\cP u\|_{L^{4}_{T}(\dot{B}^{\frac{d}{2}-\frac{1}{2}}_{2,1})}\|\nabla a\|_{L^{\frac{4}{3}}_{T}(\dot{B}^{\frac{d}{\tilde{p}}+\frac{1}{2}}_{\tilde{p},1})}
\end{eqnarray*}
provided $\frac{d}{2}-\frac{1}{2}=\frac{d}{\tilde{p}}$. By taking $k=1, r=\frac{4}{3}$ in Lemma \ref{Strichartz estimates}, there holds
$$\|\nabla a\|_{L^{\frac{4}{3}}_{T}(\dot{B}^{\frac{d}{\tilde{p}}+\frac{1}{2}}_{\tilde{p},1})}\lesssim
\|\nabla a\|_{\tilde{L}^{\frac{4}{3}}_{T}(\dot{B}^{\frac{d}{\tilde{p}}+\frac{1}{2}}_{\tilde{p},1})}\lesssim\kappa^{-\frac{\delta}{2}}\mathcal{D}_{T}.$$
Consequently, the Cauchy inequality implicates that
\begin{multline}\label{momoo}
\|T_{\cP u}\nabla a\|_{L^{1}_{T}(\dot{B}^{\frac{d}{2}}_{2,1})}\lesssim\kappa^{-\frac{\delta}{2}}(\mathcal{W}_{T}+\mathcal{V}_{T})(\mathcal{D}_{T}+\mathcal{E}_{T})
\lesssim(\mathcal{W}_{T}+\mathcal{V}_{T})^2+\kappa^{-\delta}(\mathcal{D}_{T}+\mathcal{E}_{T})^{2}.
\end{multline}
On the other hand, there holds
\begin{multline}\label{momo}
\|R(\cP u,\nabla a)+T_{\nabla a}\cP u\|_{L^{1}_{T}(\dot{B}^{\frac{d}{2}}_{2,1})}\lesssim\|\nabla a\|_{L^{2}_{T}L^\infty}\|\cP u\|_{L^{2}_{T}(\dot{B}^{\frac{d}{2}}_{2,1})}
\lesssim\kappa^{-\delta}\mathcal{D}_{T}(\mathcal{W}_{T}+\mathcal{V}_{T}).
\end{multline}
Hence, combining (\ref{momoo}) with (\ref{momo}) yields
\begin{eqnarray}\label{f}
\|u\cdot \nabla a\|_{L^{1}_{T}(\dot{B}^{\frac{d}{2}}_{2,1})}\lesssim(\mathcal{W}_{T}+\mathcal{V}_{T})^2+\kappa^{-\frac{1}{4}}(\mathcal{D}_{T}+\mathcal{E}_{T})^{2}.
\end{eqnarray}

Next we turn to $g$. For the convection term $g_{1}$, we can follow similar steps as (\ref{f}) which yields
\begin{eqnarray}
\|\mathcal{Q}u\cdot\nabla \mathcal{Q}u\|_{L^{1}_{T}(\dot{B}^{\frac{d}{2}-1}_{2,1})}
\lesssim\|\mathcal{Q}u\|_{L^{2}_{T}L^\infty}\|\mathcal{Q}u\|_{L^{2}_{T}(\dot{B}^{\frac{d}{2}}_{2,1})}\lesssim\kappa^{-\delta}\mathcal{D}_{T}\mathcal{E}_{T};
\end{eqnarray}
\begin{eqnarray}
\|\mathcal{P}u\cdot\nabla \mathcal{Q}u+\mathcal{Q}u\cdot\nabla \mathcal{P}u\|_{L^{1}_{T}(\dot{B}^{\frac{d}{2}-1}_{2,1})}
&\lesssim&\|\mathcal{P}u\mathcal{Q}u\|_{L^{1}_{T}(\dot{B}^{\frac{d}{2}}_{2,1})}\\
\nonumber&\lesssim&(\mathcal{W}_{T}+\mathcal{V}_{T})^2+\kappa^{-\delta}(\mathcal{D}_{T}+\mathcal{E}_{T})^{2};
\end{eqnarray}
\begin{eqnarray}
\|\mathcal{P}u\cdot\nabla\mathcal{P}u\|_{L^{1}_{T}(\dot{B}^{\frac{d}{2}-1}_{2,1})}
\lesssim\|\mathcal{P}u\|_{L^{\infty}_{T}(\dot{B}^{\frac{d}{2}-1}_{2,1})}\|\mathcal{P}u\|_{L^{1}_{T}(\dot{B}^{\frac{d}{2}+1}_{2,1})}\lesssim(\mathcal{V}_{T}+\mathcal{W}_{T})^{2}.
\end{eqnarray}

For the viscosity term $g_{2}$, , we shall estimates the term $\mathrm{div}(2\tilde{\mu}(\kappa^{-\frac{1}{2}}a) D(u))$ where
\begin{eqnarray*}
\|\big(\mathrm{div}(2\tilde{\mu}(\kappa^{-\frac{1}{2}}a) D(u))\big)\|_{L^{1}_{T}(\dot{B}^{\frac{d}{2}-1}_{2,1})}\lesssim\kappa^{-\frac{1}{2}}
\|a\|_{L^{\infty}_{T}(\dot{B}^{\frac{d}{2}}_{2,1})}\|D(u)\|_{L^{1}_{T}(\dot{B}^{\frac{d}{2}}_{2,1})}.
\end{eqnarray*}

Similar calculations on $\big(1+Q(\kappa^{-\frac{1}{2}}a)\big)\nabla(\tilde{\lambda}(\kappa^{-\frac{1}{2}}a)\mathrm{div}u)$ shares
\begin{eqnarray}\label{momooo}
\|\cQ g_{2}\|_{L^{1}_{T}(\dot{B}^{\frac{d}{2}-1}_{2,1})}&\lesssim&\kappa^{-\frac{1}{2}}(1+\mathcal{E}_{T})\mathcal{E}_{T}(\mathcal{E}_{T}+\mathcal{V}_{T}+\mathcal{W}_{T}).
\end{eqnarray}

Next we turn to the pressure term $g_{3}$, we have
\begin{eqnarray*}
\|\tilde{G}(\kappa^{-\frac{1}{2}}a)\nabla a\|_{L^{1}_{T}(\dot{B}^{\frac{d}{2}-1}_{2,1})}
\lesssim\|\tilde{G}(\kappa^{-\frac{1}{2}}a)\|_{L^{2}_{T}L^\infty}\|a\|_{L^{2}_{T}(\dot{B}^{\frac{d}{2}}_{2,1})}
\lesssim\|a\|_{L^{2}_{T}L^\infty}\|\frac{a}{\sqrt{\kappa}}\|_{L^{2}_{T}(\dot{B}^{\frac{d}{2}}_{2,1})}.
\end{eqnarray*}
Since that
\begin{eqnarray}\label{infty0}
\kappa^{-\frac{1}{2}}\|a\|_{L^{2}_{T}L^\infty}\lesssim\kappa^{-\frac{1}{2}}\|U^{-1}\mathrm{Re}z\|_{L^{2}_{T}(\dot{B}^{\frac{d}{p}-1}_{p,1})}
\lesssim\|z\|_{L^{2}_{T}(\dot{B}^{\frac{d}{p}}_{p,1})}\lesssim\kappa^{-\delta}\mathcal{D}_{T},
\end{eqnarray}
we arrive at
\begin{eqnarray}
\|\kappa^{-\frac{1}{2}}\tilde{G}(\kappa^{-\frac{1}{2}}a)\nabla a\|_{L^{1}_{T}(\dot{B}^{\frac{d}{2}-1}_{2,1})}\lesssim\kappa^{-\delta}\mathcal{D}_{T}\mathcal{E}_{T}.
\end{eqnarray}
Finally we bound Korteweg nonlinearities $g_{5}$, in fact, it holds
\begin{eqnarray}
\|\nabla(|\nabla a\otimes\nabla a|^2)\|_{L^{1}_{T}(\dot{B}^{\frac{d}{2}-1}_{2,1})}\lesssim\|\nabla a\|_{L^{2}_{T}L^\infty}
\|\nabla a\|_{L^{2}_{T}(\dot{B}^{\frac{d}{2}}_{2,1})}\lesssim\kappa^{-\delta}\mathcal{D}_{T}\mathcal{E}_{T}.
\end{eqnarray}

By commutator estimates we have
\begin{multline}
\sqrt{\kappa}\int^{T}_{0}\|[\ddj,\tilde{\psi}(\kappa^{-\frac{1}{2}}a)]\mathrm{div}u\|_{\dot{B}^{\frac{d}{2}}_{2,1}}ds+\sqrt{\kappa}\int^{T}_{0}\|[\ddj,\tilde{\psi}(\kappa^{-\frac{1}{2}}a)]\Delta a\|_{\dot{B}^{\frac{d}{2}}_{2,1}}ds\\
\lesssim\|\nabla a\|_{L^{2}_{T}L^\infty}\|(\nabla a,\cQ u)\|_{L^{2}_{T}(\dot{B}^{\frac{d}{2}}_{2,1})}\lesssim\kappa^{-\delta}\mathcal{D}_{T}\mathcal{E}_{T}.
\end{multline}
Therefore, combining linear estimates (\ref{linear1}) and nonlinear estimates above, we finish the proof of (\ref{1}). As for nonlinear estimates in (\ref{linear2}), we have
\begin{eqnarray}
\|\mathcal{P}(\tilde{v}\cdot\nabla\tilde{v})\|_{L^{1}_{T}(\dot{B}^{\frac{d}{2}-1}_{2,1})}
\lesssim\|\tilde{v}\|_{L^{\infty}_{T}(\dot{B}^{\frac{d}{2}-1}_{2,1})}\|\nabla\tilde{v}\|_{L^{1}_{T}(\dot{B}^{\frac{d}{2}}_{2,1})}
\lesssim\mathcal{W}^{2}_{T}.
\end{eqnarray}
On the other hand, repeating calculations in (\ref{momo})-(\ref{momoo}) and (\ref{momooo}), we obtain
\begin{eqnarray}
\|\mathcal{P}\bar{g}\|_{L^{1}_{T}(\dot{B}^{\frac{d}{2}-1}_{2,1})}
\lesssim\kappa^{-\frac{\delta}{2}}(1+\mathcal{E}_{T})(\mathcal{D}_{T}+\mathcal{E}_{T}+\mathcal{V}_{T}+\mathcal{W}_{T})^{2}
\end{eqnarray}
which leads to (\ref{2}) and we finish the proof of Proposition \ref{uniform estimates}.
\end{proof}

\subsection{Bootstrap and continuation argument}
Denote $\mathcal{Z}_{0}=\|(\frac{1}{\sqrt{\kappa}}a_{0},\nabla a_{0}, \cQ u_{0})\|_{\dot{B}^{\frac{d}{2}-1}_{2,1}}$, we state the following Lemma concerns uniform bound for $\mathcal{D}_{T},\mathcal{E}_{T},\mathcal{W}_{T}$:
\begin{lem}\label{continuation argument}
Assume $T>0$ to be finite or infinite and $\mathcal{V}_{T}$ is bounded, then there exists a large enough $\kappa_{0}>0$ depends on $\|(a_{0},\nabla a_{0}, \cQ u_{0})\|_{\dot{B}^{\frac{d}{2}-1}_{2,1}}, \mathcal{V}_{T}$ such that
it holds true for all $\kappa\geq\kappa_{0}$
\begin{eqnarray}\label{11}
\mathcal{D}_{T}+\mathcal{E}_{T}\leq2C(\mathcal{Z}_{0}+\mathcal{V}^{2}_{T});
\end{eqnarray}
\begin{eqnarray}\label{22}
\mathcal{W}_{T}\leq2\kappa^{-\frac{\delta}{2}}Ce^{\tilde{C}\mathcal{V}_{T}}(1+2C(\mathcal{Z}_{0}+\mathcal{V}^{2}_{T}))(C(\mathcal{Z}_{0}+\mathcal{V}^{2}_{T})+\mathcal{V}^{2}_{T})^2.
\end{eqnarray}
\end{lem}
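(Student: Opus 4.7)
The plan is a standard bootstrap/continuation argument driven by the two estimates \eqref{1}--\eqref{2} of Proposition~\ref{uniform estimates}. Local well-posedness from \cite{DD} yields a maximal existence time and guarantees that $\mathcal{D}_t,\mathcal{E}_t,\mathcal{W}_t$ are continuous in $t$ and small for $t$ close to $0$, so the bootstrap has a nontrivial starting point. Abbreviating $M:=C(\mathcal{Z}_0+\mathcal{V}_T^2)$ and $N:=2\kappa^{-\delta/2}Ce^{\tilde{C}\mathcal{V}_T}(1+2M)(M+\mathcal{V}_T^2)^2$, I would define
\[
T^\star:=\sup\bigl\{\,t\in[0,T]\,:\,\mathcal{D}_t+\mathcal{E}_t\le 2M\ \text{and}\ \mathcal{W}_t\le N\,\bigr\},
\]
and try to show that $T^\star=T$ by deriving strict improvements of both bootstrap conclusions on $[0,T^\star]$.

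For the first improvement, substitute the bootstrap into \eqref{1}: the right-hand side is at worst $C(\mathcal{Z}_0+\mathcal{V}_T^2)+CN^2+C\kappa^{-\delta}(1+2M)(2M)^2$. The last two contributions are $O(\kappa^{-\delta})$ times quantities that depend only on $M$ and $\mathcal{V}_T$, so choosing $\kappa_0=\kappa_0(M,\mathcal{V}_T)$ large enough makes them less than $M$, yielding $\mathcal{D}_t+\mathcal{E}_t<2M$ strictly.

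For the second improvement, plug $\mathcal{D}_t+\mathcal{E}_t\le 2M$ into \eqref{2} to obtain
\[
\mathcal{W}_t\le Ce^{\tilde{C}\mathcal{V}_T}\mathcal{W}_t^2+\kappa^{-\delta/2}Ce^{\tilde{C}\mathcal{V}_T}(1+2M)(2M+\mathcal{V}_T+\mathcal{W}_t)^2.
\]
The key observation is that the sub-bootstrap $\mathcal{W}_t\le N=O(\kappa^{-\delta/2})$ makes $Ce^{\tilde{C}\mathcal{V}_T}\mathcal{W}_t^2\le\tfrac{1}{2}\mathcal{W}_t$ once $\kappa$ is large, so the quadratic can be absorbed into the left-hand side. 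The remainder, after using $\mathcal{W}_t\le N\ll M+\mathcal{V}_T^2$ inside the parenthesis, is bounded by $\kappa^{-\delta/2}Ce^{\tilde{C}\mathcal{V}_T}(1+2M)(M+\mathcal{V}_T^2)^2$ up to a constant that is absorbed in the factor of $2$ sitting in $N$. This gives $\mathcal{W}_t<N$ strictly. The two strict improvements together preclude $T^\star<T$, so $T^\star=T$ and \eqref{11}--\eqref{22} follow.

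The main obstacle I anticipate is exactly the $\mathcal{W}_t^2$ term in \eqref{2}: it carries no power of $\kappa^{-1}$, hence cannot be tamed by largeness of $\kappa$ alone. This is precisely why the second bootstrap quantity $N$ must be introduced with a $\kappa^{-\delta/2}$ scaling, so that the quadratic $\mathcal{W}_t^2$ is effectively of order $\kappa^{-\delta}$ and can be absorbed into $\mathcal{W}_t$ on the left. The resulting circularity, namely that the bound on $\mathcal{W}_t$ is used to improve the bound on $\mathcal{W}_t$, is the heart of the continuation step and is resolved by the standard topological bootstrap together with the strict inequalities derived above.
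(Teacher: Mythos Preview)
Your proposal is correct and follows essentially the same bootstrap/continuation scheme as the paper: define the maximal time on which the two bounds \eqref{11}--\eqref{22} hold, substitute the bootstrap hypotheses into \eqref{1}--\eqref{2}, and use largeness of $\kappa$ to obtain strict improvements. The only stylistic difference is your treatment of the $\mathcal{W}_t^2$ term in \eqref{2}: you absorb it into the left-hand side via $Ce^{\tilde{C}\mathcal{V}_T}\mathcal{W}_t^2\le Ce^{\tilde{C}\mathcal{V}_T}N\,\mathcal{W}_t\le\tfrac12\mathcal{W}_t$, whereas the paper substitutes $\mathcal{W}_{T^*}\le N$ outright so that $\mathcal{W}_{T^*}^2=O(\kappa^{-\delta})$ becomes a higher-order remainder; both routes yield the same strict improvement and close the argument.
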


\begin{proof}
We denote:
\begin{multline*}
T^*\triangleq\max\Big\{T^*\leq T:\mathcal{D}_{T^*}+\mathcal{E}_{T^*}\leq 2C(\mathcal{Z}_{0}+\mathcal{V}^{2}_{T^*});\\
\mathcal{W}_{T^*}\leq2\kappa^{-\frac{\delta}{2}}Ce^{\tilde{C}\mathcal{V}_{T^*}}(1+\mathcal{Z}_{0}+\mathcal{V}^{2}_{T^*})(\mathcal{Z}_{0}+2\mathcal{V}^{2}_{T^*})^{2}\Big\}.
\end{multline*}
According to Proposition \ref{uniform estimates}, there holds
\begin{eqnarray*}
\mathcal{D}_{T^*}+\mathcal{E}_{T^*}\!\!&\leq&\!\! C\Big(\mathcal{Z}_{0}+\mathcal{V}^{2}_{T^*}+4\kappa^{-2\delta}C^{2}e^{\tilde{C}\mathcal{V}_{T^*}}(1+2C(\mathcal{Z}_{0}+\mathcal{V}^{2}_{T^*}))^2
(C(\mathcal{Z}_{0}+\mathcal{V}^{2}_{T^*})+\mathcal{V}^{2}_{T^*})^4\\
&+&\kappa^{-\delta}(1+2C(\mathcal{Z}_{0}+\mathcal{V}^{2}_{T^*}))(C(\mathcal{Z}_{0}+\mathcal{V}^{2}_{T^*})+\mathcal{V}^{2}_{T^*})^2\Big)
\end{eqnarray*}
while
\begin{eqnarray*}
\mathcal{W}_{T^*}&\leq&4\kappa^{-2\delta}C^{3}e^{3\tilde{C}\mathcal{V}_{T^*}}(1+2C(\mathcal{Z}_{0}+\mathcal{V}^{2}_{T^*}))^2
(C(\mathcal{Z}_{0}+\mathcal{V}^{2}_{T^*})+\mathcal{V}^{2}_{T^*})^4\\
&+&\kappa^{-\delta}Ce^{\tilde{C}\mathcal{V}_{T^*}}(1+2C(\mathcal{Z}_{0}+\mathcal{V}^{2}_{T^*}))(C(\mathcal{Z}_{0}+\mathcal{V}^{2}_{T^*})+\mathcal{V}^{2}_{T^*})^2.
\end{eqnarray*}

By selecting the $\kappa_{0}$ satisfies
\begin{multline*}
4\kappa_{0}^{-2\delta}C^{2}e^{\tilde{C}\mathcal{V}_{T^*}}(1+2C(\mathcal{Z}_{0}+\mathcal{V}^{2}_{T^*}))^2
(C(\mathcal{Z}_{0}+\mathcal{V}^{2}_{T^*})+\mathcal{V}^{2}_{T^*})^4\\
+\kappa_{0}^{-\delta}(1+2C(\mathcal{Z}_{0}+\mathcal{V}^{2}_{T^*}))(C(\mathcal{Z}_{0}+\mathcal{V}^{2}_{T^*})+\mathcal{V}^{2}_{T^*})^2=\frac{1}{2}C(\mathcal{Z}_{0}+\mathcal{V}^{2}_{T^*});
\end{multline*}
\begin{multline*}
4\kappa_{0}^{-\delta}C^{3}e^{3\tilde{C}\mathcal{V}_{T^*}}(1+2C(\mathcal{Z}_{0}+\mathcal{V}^{2}_{T^*}))^2
(C(\mathcal{Z}_{0}+\mathcal{V}^{2}_{T^*})+\mathcal{V}^{2}_{T^*})^4\\
=\frac{1}{2}Ce^{\tilde{C}\mathcal{V}_{T^*}}(1+\mathcal{Z}_{0}+\mathcal{V}^{2}_{T^*})(\mathcal{Z}_{0}+2\mathcal{V}^{2}_{T^*})^{2},
\end{multline*}
we could conclude on $[0,T^*]$ such that for all $\kappa\geq\kappa_{0}$
\begin{eqnarray*}
\mathcal{D}_{T^*}+\mathcal{E}_{T^*}\leq\frac{3}{2}C(\mathcal{Z}_{0}+\mathcal{V}^{2}_{T});
\end{eqnarray*}
\begin{eqnarray*}
\mathcal{W}_{T^*}\leq\frac{3}{2}\kappa^{-\delta}Ce^{\tilde{C}\mathcal{V}_{T^*}}(1+2C(\mathcal{Z}_{0}+\mathcal{V}^{2}_{T^*}))(C(\mathcal{Z}_{0}+\mathcal{V}^{2}_{T^*})+\mathcal{V}^{2}_{T^*})^2
\end{eqnarray*}
which contracts with definition of $T^*$ and we could extend to $T$ beyond $[0,T^*]$.
\end{proof}

Moreover, we introduce the following local well-posedness has been proved in \cite{DD}:
\begin{lem}
There exists a $\eta>0$ such that if $(\nabla a_{0},u_{0})\in\dot{B}^{\frac{d}{2}-1}_{2,1}$ and
$$\|a_{0}\|_{\dot{B}^{\frac{d}{2}}_{2,1}}\leq\eta$$
there exist a $T>0$ such that (\ref{1.1})-(\ref{1.2}) admits a unique solution $(a,u)\in\mathcal{C}_{T}(\dot{B}^{\frac{d}{2}-1}_{2,1})\cap L^1_{T}(\dot{B}^{\frac{d}{2}+1}_{2,1})$ satisfying
\begin{eqnarray}
\|(a,u)\|_{L^\infty_{T}(\dot{B}^{\frac{d}{2}-1}_{2,1})}+\|(a,u)\|_{L^1_{T}(\dot{B}^{\frac{d}{2}+1}_{2,1})}\leq C_{T}.
\end{eqnarray}
\end{lem}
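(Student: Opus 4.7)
The plan is to establish local existence by a standard Friedrichs/contraction iterative scheme applied to the reformulated system \eqref{linearized}--\eqref{nonlinear}, and to obtain uniqueness by an energy estimate on the difference of two putative solutions at one derivative below the existence regularity. For this local result the dispersion coefficient $\kappa$ is a fixed positive constant and all implicit constants may depend on it, so the dissipative--dispersive splitting of Proposition~\ref{uniform estimates} is not needed; pure parabolic--dispersive estimates on the coupled $(a,u)$ system suffice. Concretely, set $(a^0,u^0)=(e^{t\Delta}a_0, e^{t\bar{\mathcal{A}}}u_0)$ and given $(a^n,u^n)$ let $(a^{n+1},u^{n+1})$ be the solution of the linear problem obtained by freezing the nonlinearities $f^n:=f(a^n,u^n)$ and $g^n:=g(a^n,u^n)$ on the right-hand side of \eqref{linearized}. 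Each linear step is well posed by standard parabolic--dispersive theory.

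Next, I would derive a uniform-in-$n$ linear estimate by running the $L^2$ energy argument at the Littlewood--Paley level exactly as in Step~1 of the proof of Proposition~\ref{uniform estimates}, with $\kappa$ frozen. Defining
\begin{equation*}
Y_T^k := \|(a^k,\nabla a^k,u^k)\|_{L^\infty_T(\dot{B}^{\frac{d}{2}-1}_{2,1})} + \|(a^k,\nabla a^k,u^k)\|_{L^1_T(\dot{B}^{\frac{d}{2}+1}_{2,1})},
\end{equation*}
the cross-term $\int \mathcal{Q}u^{n+1}\cdot\nabla a^{n+1}\,dx$ recovers the parabolic control of $\nabla a^{n+1}$ in $L^1_T(\dot B^{\frac{d}{2}+1}_{2,1})$, yielding
\begin{equation*}
Y_T^{n+1} \leq C_\kappa\bigl( \|(a_0,\nabla a_0,u_0)\|_{\dot B^{\frac{d}{2}-1}_{2,1}} + \|(f^n,\, g^n-g_4^n)\|_{L^1_T(\dot B^{\frac{d}{2}-1}_{2,1})} + \mathcal{R}^n \bigr),
\end{equation*}
where $\mathcal{R}^n$ is the commutator remainder $\sqrt{\kappa}\int_0^T\|[\dot{\Delta}_j,\tilde{\psi}(\kappa^{-1/2}a^n)](\Delta a^n+\mathrm{div}\,u^n)\|_{\dot B^{\frac{d}{2}}_{2,1}}\,ds$ produced by integrating the quasilinear piece $g_4^n=\sqrt{\kappa}\nabla(\tilde{\psi}(\kappa^{-1/2}a^n)\Delta a^n)$ by parts against $\mathcal{Q}u^{n+1}$, exactly as in (3.10) of the paper.

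Third, I would bound the nonlinear sources by standard product, paraproduct and composition estimates in critical Besov spaces. The convection terms $u\cdot\nabla a$, $u\cdot\nabla u$ and the capillary quadratic $g_5=\tfrac12\nabla(|\nabla a|^2)$ are controlled by products in $\dot B^{\frac{d}{2}}_{2,1}$ and gain a factor $T^{1/2}$ by interpolation between the endpoints defining $Y_T$; the composition factors $\tilde{\mu},\tilde{\lambda},\tilde{G},\tilde{Q},\tilde{\psi}$ are handled by the standard composition lemma, using that $\dot B^{\frac{d}{2}}_{2,1}$ is an algebra and that $a/\sqrt{\kappa}$ stays near $0$. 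The commutator $\mathcal{R}^n$ is estimated by $\|a^n\|_{L^\infty_T(\dot B^{\frac{d}{2}}_{2,1})}(\|\nabla a^n\|_{L^1_T(\dot B^{\frac{d}{2}+1}_{2,1})}+\|u^n\|_{L^1_T(\dot B^{\frac{d}{2}+1}_{2,1})})$ via the standard Besov commutator lemma. Using $\|a_0\|_{\dot B^{\frac{d}{2}}_{2,1}}\leq\eta$ together with continuity in time of $\|a^n(t)\|_{\dot B^{\frac{d}{2}}_{2,1}}$ at $t=0$, one chooses $R$ and $T$ so that the map $(a^n,u^n)\mapsto(a^{n+1},u^{n+1})$ stabilizes the ball $\{Y_T\leq R\}$. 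Contraction is then obtained by the same estimate on the differences in the weaker space $L^\infty_T(\dot B^{\frac{d}{2}-2}_{2,1})\cap L^1_T(\dot B^{\frac{d}{2}}_{2,1})$ (to avoid any loss of regularity in the quasilinear term); the strong bound on the limit follows from lower semicontinuity, and uniqueness from the same contraction argument applied to two solutions with identical data.

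The main obstacle is the third-order quasilinear term $g_4$: because it shares the principal order of the linear dispersion $\sqrt{\kappa}\nabla\Delta a$, it cannot be absorbed by smallness of $T$ alone. The integration-by-parts trick that trades $g_4$ against $\mathcal{Q}u$ for a Littlewood--Paley commutator is therefore essential, and the smallness assumption $\|a_0\|_{\dot B^{\frac{d}{2}}_{2,1}}\leq\eta$ is what guarantees that the resulting coefficient $\|a^n(t)\|_{\dot B^{\frac{d}{2}}_{2,1}}$ stays below the threshold at which the quasilinear contribution overwhelms the linear parabolic--dispersive gain. This smallness simultaneously ensures positivity of $\rho=\mathcal{L}^{-1}(\kappa^{-1/2}a)$, so that the change of unknown is well defined throughout $[0,T]$.
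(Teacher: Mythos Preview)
The paper does not prove this lemma; it is quoted verbatim as a known result from Danchin--Desjardins \cite{DD} and only the statement is recalled for use in the continuation argument. So there is no proof in the paper against which to compare your proposal.

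Your sketch is in the spirit of \cite{DD} and is largely sound, but one point deserves care. In a fully frozen (semilinear) iteration where $g_4^n=\sqrt{\kappa}\,\nabla(\tilde{\psi}(\kappa^{-1/2}a^n)\Delta a^n)$ is evaluated at the old iterate, the integration by parts against $\mathcal{Q}u^{n+1}_j$ does \emph{not} reproduce the cancellation behind \eqref{localization estimates}: the residual
\[
\sqrt{\kappa}\int_{\R^d}\tilde{\psi}(\kappa^{-1/2}a^n)\,\Delta a^n_j\;\mathrm{div}\,\mathcal{Q}u^{n+1}_j\,dx
\]
pairs $a^n$ with $u^{n+1}$ and cannot be cancelled against the analogous term coming from $f^n$, which pairs $u^n$ with $a^{n+1}$. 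Two standard fixes are available. Either run a genuinely quasilinear scheme in which the coefficient is frozen but the principal part is not, i.e.\ keep $\sqrt{\kappa}\,\nabla(\tilde{\psi}(\kappa^{-1/2}a^n)\Delta a^{n+1})$ on the left, so that the symmetric cancellation carries over verbatim; or, more simply, dispense with the commutator trick for this term and estimate $g_4^n$ directly in $L^1_T(\dot B^{d/2-1}_{2,1})$ by
\[
\sqrt{\kappa}\,\|\tilde{\psi}(\kappa^{-1/2}a^n)\,\Delta a^n\|_{L^1_T(\dot B^{d/2}_{2,1})}
\lesssim \|a^n\|_{L^\infty_T(\dot B^{d/2}_{2,1})}\,\|\nabla a^n\|_{L^1_T(\dot B^{d/2+1}_{2,1})},
\]
and absorb it using the propagated smallness $\|a^n\|_{L^\infty_T(\dot B^{d/2}_{2,1})}\lesssim\eta$. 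With either adjustment your argument closes.
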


Therefore, by a standard bootstrap argument, we could prove that once $\kappa\geq\kappa_{0}$ large enough, the solution could be extended to $[0,\infty]$ while the incompressible limit is ensured by (\ref{2}).

\section{Decay rates}\setcounter{equation}{0}
In this section, we present the proof of Theorem \ref{thm2.2} where we shall establish decay rates for  arbitrary order derivatives. Our method includes two steps where we first reveal the evolution of regularity under $\sigma$. Then we would establish the decay rates for any order derivatives.

\subsection{Evolution of norm under regularity $-\sigma$}
In this section, we establish uniform bounds of the solution in negative Besov norms. More precisely, it is shown that for any $t>0$
\begin{eqnarray}\label{evo}
\|(\frac{1}{\sqrt{\kappa}}a,\nabla a, u)(t)\|_{{\dot B}^{-\sigma}_{2,\infty}}\leq C_0
\end{eqnarray}
where $C_0>0$ depends on the initial norm $\|(\frac{1}{\sqrt{\kappa}}a_{0},\nabla a_{0}, u_{0})\|_{\dot{B}^{-\sigma}_{2,\infty}}$. The key step is to claim the following lemma concerns the evolution of norm under the negative regularity $-\sigma$.
\begin{lem}\label{lem4.1}
Assume $(a,u)$ to be the solution established in Theorem \ref{thm2.1}. Let $\sigma$ fulfills $1-\frac{d}{2}<\sigma\leq\frac{d}{2}$.
It holds that
\begin{multline}\label{energy esti}
\|(\frac{1}{\sqrt{\kappa}}a,\nabla a,  u)(t)\|_{\dot{B}^{-\sigma}_{2,\infty}} \lesssim\|(\frac{1}{\sqrt{\kappa}}a_{0},\nabla a_{0},  u_{0})\|_{\dot{B}^{-\sigma}_{2,\infty}}\\+\int^{t}_{0}(1+\|a\|_{\dot{B}^{\frac{d}{2}}_{2,1}})\|(\frac{1}{\sqrt{\kappa}}a,\nabla a,  u)\|_{\dot{B}^{\frac{d}{2}+1}_{2,1}}\|(\frac{1}{\sqrt{\kappa}}a,\nabla a,  u)\|_{\dot{B}^{-\sigma}_{2,\infty}}ds.
\end{multline}
\end{lem}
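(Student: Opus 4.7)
The strategy is to mimic \emph{Step 1} of Proposition~\ref{uniform estimates}: apply $\ddj$ to the perturbation system \eqref{linearized}, exploit the same symmetric/skew-symmetric cross terms (including the cancellation coming from $g_{4}=\sqrt{\kappa}\nabla(\tilde\psi(\kappa^{-1/2}a)\Delta a)$ against the $\sqrt{\kappa}$-part of $f$) to obtain the frequency-localized ``good unknown'' inequality
\begin{equation*}
\frac{d}{dt}\bigl\|\bigl(\tfrac{1}{\sqrt{\kappa}}a_{j},2^{j}a_{j},u_{j}\bigr)\bigr\|_{L^{2}}
+ c\,2^{2j}\bigl\|\bigl(\tfrac{1}{\sqrt{\kappa}}a_{j},2^{j}a_{j},u_{j}\bigr)\bigr\|_{L^{2}}
\lesssim \|N_{j}\|_{L^{2}},
\end{equation*}
where $N_{j}$ collects $\ddj$-projections of $f,g-g_{4}$ together with the commutator $[\ddj,\tilde\psi(\kappa^{-1/2}a)](\mathrm{div}\,u+\Delta a)$ and the transport remainder $\ddj(u\!\cdot\!\nabla a)$. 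Integrating in time, dropping the nonnegative dissipative term, multiplying by $2^{-\sigma j}$ and taking $\sup_{j\in\Z}$ reduces Lemma~\ref{lem4.1} to controlling $\int_{0}^{t}\|N\|_{\dot B^{-\sigma}_{2,\infty}}\,ds$ with $N$ the aggregate nonlinearity.

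Next I would estimate each nonlinear block via Bony paraproducts, always placing the top derivative on a factor measured in $\dot B^{\frac{d}{2}+1}_{2,1}$ (which is in $L^{1}_{t}$ by Theorem~\ref{thm2.1}) and the remaining factor in $\dot B^{-\sigma}_{2,\infty}$. The key product rule
\begin{equation*}
\|F\,G\|_{\dot B^{-\sigma}_{2,\infty}}
\lesssim \|F\|_{\dot B^{-\sigma}_{2,\infty}}\,\|G\|_{\dot B^{\frac{d}{2}}_{2,1}},
\end{equation*}
valid exactly under $1-\tfrac{d}{2}<\sigma\le\tfrac{d}{2}$ (the lower bound secures positivity of index sums in the remainder, the upper bound keeps $-\sigma$ above the $L^{2}$ scaling threshold), handles convective terms $u\cdot\nabla a$ and $u\cdot\nabla u$ after writing $u\cdot\nabla a=\mathrm{div}(u\,a)-a\,\mathrm{div}\,u$ or expanding $\nabla$ away from the low-regularity factor. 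For the quasi-linear and pressure blocks $g_{2},g_{3}$ I would first invoke the composition estimate $\|\tilde H(\kappa^{-1/2}a)\|_{\dot B^{\frac{d}{2}}_{2,1}}\lesssim\kappa^{-1/2}\|a\|_{\dot B^{\frac{d}{2}}_{2,1}}$ for $\tilde H\in\{\tilde Q,\tilde\mu,\tilde\lambda,\tilde G,\tilde\psi\}$ (all smooth and vanishing at $0$), producing the factor $(1+\|a\|_{\dot B^{\frac{d}{2}}_{2,1}})$ in \eqref{energy esti}; the capillary term $g_{5}=\tfrac{1}{2}\nabla|\nabla a|^{2}$ is bilinear in $\nabla a$ and handled directly by the same product rule. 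The commutator $[\ddj,\tilde\psi(\kappa^{-1/2}a)](\mathrm{div}\,u+\Delta a)$ is treated with the standard $\dot B^{-\sigma}_{2,\infty}$ commutator lemma, again distributing one derivative onto the $\dot B^{\frac{d}{2}+1}_{2,1}$ factor and leaving $\kappa^{-1/2}a$ to be measured either in $\dot B^{\frac{d}{2}}_{2,1}$ or in $\dot B^{-\sigma}_{2,\infty}$.

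\textbf{Main obstacle.} The genuine difficulty is the $\sqrt{\kappa}$-amplified terms (the part of $f$ carrying $\sqrt\kappa\,\tilde\psi\,\mathrm{div}\,u$ and $g_{4}$): a naive bound loses a full $\sqrt\kappa$, incompatible with a $\kappa$-uniform evolution of the $\dot B^{-\sigma}_{2,\infty}$ norm. This is exactly why the good unknown $(\tfrac{1}{\sqrt\kappa}a_{j},2^{j}a_{j},u_{j})$ is used in Step~1 of Proposition~\ref{uniform estimates}: integration by parts against $\sqrt\kappa\,\nabla\Delta a\cdot\mathcal{Q}u$ produces an exact cancellation that removes the $\sqrt\kappa$ factor and leaves only a commutator plus a symmetric $\tilde\psi(\kappa^{-1/2}a)\,\mathrm{div}\mathcal{Q}u_{j}\,\Delta a_{j}$ term, both of which are $\kappa$-uniform. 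Reusing this cancellation verbatim in the $\dot B^{-\sigma}_{2,\infty}$ framework is what makes \eqref{energy esti} close; once that is in place the remaining bookkeeping is routine paraproduct analysis.
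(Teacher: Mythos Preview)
Your proposal is correct and follows essentially the same route as the paper: recycle the localized energy inequality \eqref{localization estimates} from Step~1 of Proposition~\ref{uniform estimates} (with the crucial $\sqrt{\kappa}$-cancellation between $g_{4}$ and the $\sqrt{\kappa}\,\tilde\psi\,\mathrm{div}\,u$ part of $f$), integrate in time, take $\sup_{j}2^{-\sigma j}$, and then bound the nonlinear remainders by paraproduct/commutator estimates. The one ingredient you leave implicit, and which the paper writes out, is an interpolation step: after the commutator estimate one lands on
\[
\|\nabla a\|_{\dot B^{\frac{d}{2}}_{2,1}}\,\|(u,\nabla a)\|_{\dot B^{-\sigma+1}_{2,\infty}}
\]
(and similarly $\|a\|_{\dot B^{\frac{d}{2}}_{2,1}}\,\|\Delta u\|_{\dot B^{-\sigma}_{2,\infty}}$ for the quasi-linear block), which is not yet of the form $\|(\cdot)\|_{\dot B^{-\sigma}_{2,\infty}}\|(\cdot)\|_{\dot B^{\frac{d}{2}+1}_{2,1}}$. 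The paper fixes this by interpolating each factor between $\dot B^{-\sigma}_{2,\infty}$ and $\dot B^{\frac{d}{2}+1}_{2,1}$ with complementary exponents $\theta=\frac{1}{\sigma+\frac{d}{2}+1}$ (respectively $\tilde\theta=\frac{2}{\sigma+\frac{d}{2}+1}$) and then applying Young's inequality; your phrase ``distributing one derivative'' is exactly this, so your plan closes once that detail is made explicit.
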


\begin{proof}
Let us first recall (\ref{localization estimates}), integral on time leads us to
\begin{multline}
\|(\frac{1}{\sqrt{\kappa}}a_{j},\nabla a_{j}, \cQ u_{j})\|_{L^2}\lesssim e^{-2^{2j}t}\|(\frac{1}{\sqrt{\kappa}}a_{0,j},\nabla a_{0,j}, \cQ u_{0,j})\|_{L^2}\\
+\int^{t}_{0}e^{-2^{2j}(t-s)}\|(\frac{1}{\sqrt{\kappa}}f_{j},\cQ g_{j}-\cQ g_{4,j})\|_{L^{2}}+2^{j}\|\ddj(u\cdot\nabla a)\|_{L^{2}}ds\\
+\sqrt{\kappa}\int^{t}_{0}e^{-2^{2j}(t-s)}2^{j}\big(\|[\ddj,\tilde{\psi}(\kappa^{-\frac{1}{2}}a)](\mathrm{div}u+\Delta a)\|_{L^2}ds.
\end{multline}
Hence, taking $\dot{B}^{-\sigma}_{2,\infty}$ norm yields
\begin{multline}
\|(\frac{1}{\sqrt{\kappa}}a,\nabla a, \cQ u)\|_{\dot{B}^{-\sigma}_{2,\infty}}\lesssim \|(\frac{1}{\sqrt{\kappa}}a_{0},\nabla a_{0}, \cQ u_{0})\|_{\dot{B}^{-\sigma}_{2,\infty}}
+\int^{t}_{0}\|(\frac{1}{\sqrt{\kappa}}f,\cQ g-\cQ g_{4})\|_{\dot{B}^{-\sigma}_{2,\infty}}ds\\
+\int^{t}_{0}\|u\cdot\nabla a\|_{\dot{B}^{-\sigma+1}_{2,\infty}}ds
+\sqrt{\kappa}\int^{t}_{0}\|[\ddj,\tilde{\psi}(\kappa^{-\frac{1}{2}}a)](\mathrm{div}u+\Delta a)\|_{\dot{B}^{-\sigma+1}_{2,\infty}}ds
\end{multline}
while for the incompressible part, we do the similar calculations and reach
\begin{eqnarray}
\|\cP u\|_{\dot{B}^{-\sigma}_{2,\infty}}\lesssim \|\cP u_{0}\|_{\dot{B}^{-\sigma}_{2,\infty}}+\int^{t}_{0}\|\cP g\|_{\dot{B}^{-\sigma}_{2,\infty}}ds.
\end{eqnarray}

For nonlinear terms, we start with $u\cdot\nabla a$, indeed, we have
\begin{eqnarray}
\|u\cdot\nabla a\|_{\dot{B}^{-\sigma+1}_{2,\infty}}\lesssim\|u\|_{\dot{B}^{-\sigma}_{2,\infty}}\|\nabla a\|_{\dot{B}^{\frac{d}{2}+1}_{2,1}}
+\|\nabla a\|_{\dot{B}^{-\sigma}_{2,\infty}}\|u\|_{\dot{B}^{\frac{d}{2}+1}_{2,1}}
\end{eqnarray}
provided $1-\frac{d}{2}<\sigma\leq\frac{d}{2}$. Similarly we could estimate terms $u\cdot\nabla u, \nabla(|\nabla a|^{2}),\kappa^{-\frac{1}{2}}\tilde{G}(\kappa^{-\frac{1}{2}}a)\nabla a$. For commutators, we have
\begin{eqnarray*}
\|[\ddj,\tilde{\psi}(\kappa^{-\frac{1}{2}}a)](\mathrm{div}u+\Delta a)\|_{\dot{B}^{-\sigma+1}_{2,\infty}}&\lesssim&\kappa^{-\frac{1}{2}}\|\nabla a\|_{\dot{B}^{\frac{d}{2}}_{2,1}}
\|(u+\nabla a)\|_{\dot{B}^{-\sigma+1}_{2,\infty}}.
\end{eqnarray*}
Now taking $\theta=\frac{1}{\sigma+\frac{d}{2}+1}$, interpolation allows us to have
$$\|\nabla a\|_{\dot{B}^{\frac{d}{2}}_{2,1}}\leq\|\nabla a\|^{\theta}_{\dot{B}^{-\sigma}_{2,\infty}}\|\nabla a\|^{1-\theta}_{\dot{B}^{\frac{d}{2}+1}_{2,1}}$$
and
$$\|(u+\nabla a)\|_{\dot{B}^{-\sigma+1}_{2,\infty}}\leq\|(\nabla a,u)\|^{1-\theta}_{\dot{B}^{-\sigma}_{2,\infty}}\|(\nabla a,u)\|^{\theta}_{\dot{B}^{\frac{d}{2}+1}_{2,1}},$$
thus we find
\begin{eqnarray*}
\sqrt{\kappa}\|[\ddj,\tilde{\psi}(\kappa^{-\frac{1}{2}}a)](\mathrm{div}u+\Delta a)\|_{\dot{B}^{-\sigma+1}_{2,\infty}}&\lesssim&\|(\nabla a,u)\|_{\dot{B}^{-\sigma}_{2,\infty}}\|(\nabla a,u)\|_{\dot{B}^{\frac{d}{2}+1}_{2,1}}.
\end{eqnarray*}
For the quasi-linear ones, we take a look on $\tilde{\lambda}(\kappa^{-\frac{1}{2}}a)\Delta u$, we actually have
\begin{eqnarray*}
\|\tilde{\lambda}(\kappa^{-\frac{1}{2}}a)\Delta u\|_{\dot{B}^{-\sigma}_{2,\infty}}&\lesssim&\kappa^{-\frac{1}{2}}\|a\|_{\dot{B}^{\frac{d}{2}}_{2,1}}
\|\Delta u\|_{\dot{B}^{-\sigma}_{2,\infty}}\\
&\lesssim&\kappa^{-\frac{1}{2}}\|\nabla a\|^{\tilde{\theta}}_{\dot{B}^{-\sigma}_{2,\infty}}\|\nabla a\|^{1-\tilde{\theta}}_{\dot{B}^{\frac{d}{2}+1}_{2,1}}
\| u\|^{1-\tilde{\theta}}_{\dot{B}^{-\sigma}_{2,\infty}}\| u\|^{\tilde{\theta}}_{\dot{B}^{\frac{d}{2}+1}_{2,1}}
\end{eqnarray*}
with $\tilde{\theta}=\frac{2}{\sigma+\frac{d}{2}+1}$. Again, the Cauchy inequality allows us to arrive at
\begin{eqnarray*}
\|\tilde{\lambda}(\kappa^{-\frac{1}{2}}a)\Delta u\|_{\dot{B}^{-\sigma}_{2,\infty}}\lesssim\kappa^{-\frac{1}{2}}\|(\nabla a,u)\|_{\dot{B}^{-\sigma}_{2,\infty}}\|(\nabla a,u)\|_{\dot{B}^{\frac{d}{2}+1}_{2,1}}.
\end{eqnarray*}
Other terms follow similar calculations and hence, Lemma \ref{lem4.1} is proved.
\end{proof}

Now Theorem \ref{thm2.1} ensures that
\begin{multline}\int^{t}_{0}(1+\|a\|_{\dot{B}^{\frac{d}{2}}_{2,1}})\|(\frac{1}{\sqrt{\kappa}}a,\nabla a,  u)\|_{\dot{B}^{\frac{d}{2}+1}_{2,1}}ds\\
\leq(1+\|a\|_{L^\infty_{T}(\dot{B}^{\frac{d}{2}}_{2,1})})\|(\frac{1}{\sqrt{\kappa}}a,\nabla a,  u)\|_{L^1_{T}(\dot{B}^{\frac{d}{2}+1}_{2,1})}\leq C,\end{multline}
the Gronwall inequality allows us to have (\ref{evo}).

\subsection{Decay estimates of arbitrary order derivatives}
In this subsection, we would establish decay rates for derivatives. Let us first introduce the following signal:
\begin{eqnarray}\label{X}
\mathcal{X}_{t}\triangleq\sup_{t\in[0,T]}t^{\frac{|\alpha|}{2}}\|D^{\alpha}(\frac{1}{\sqrt{\kappa}}a,\nabla a, u)\|_{\dot{B}^{-\sigma}_{2,\infty}}
\end{eqnarray}
where $\alpha$ is the index fulfills $|\alpha|>0$. Moreover, we shall define $\mathcal{Y}_{t}$ to present the decay functional space for solutions of (INS):
\begin{eqnarray}\label{Y}
\mathcal{Y}_{t}\triangleq\sup_{t\in[0,T]}t^{\frac{|\alpha|}{2}}\|D^{\alpha}v\|_{\dot{B}^{-\sigma}_{2,\infty}}.
\end{eqnarray}

Our main purpose is to establish the following Lemma:
\begin{prop}\label{high order estimates}
Let $d\geq2$. Suppose that $(a,u)$ is the solution of system (\ref{1.1})-(\ref{1.2}). Then the following inequality holds true for $|\alpha|\geq\sigma$ and $t\geq1$ such that
\begin{multline}\label{3}
\mathcal{X}_{t}\lesssim \|(\frac{1}{\sqrt{\kappa}}a_{0},\nabla a_{0}, u_{0})\|_{\dot{B}^{-\sigma}_{2,\infty}}
+\|(\frac{1}{\sqrt{\kappa}}a,\nabla a, u)\|_{L^{\infty}_{t}(\dot{B}^{-\sigma}_{2,\infty})}(\mathcal{E}_{t}+\mathcal{W}_{t}+\mathcal{V}_{t})\\
+(\mathcal{V}_{t}+\mathcal{D}_{t})\mathcal{Y}_{t}+(\mathcal{W}_{t}
+\kappa^{-\frac{\delta}{\alpha-\sigma}}t^{-\epsilon}\mathcal{V}_{t}+\kappa^{-\delta}\mathcal{D}_{t})\mathcal{X}_{t}
\end{multline}
where $\epsilon=\frac{d}{2}-1-\sigma$.
\end{prop}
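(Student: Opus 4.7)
The plan is to apply Littlewood--Paley localization to the reformulated system \eqref{linearized}, combine the dissipative estimate \eqref{localization estimates} established in Step 1 of Proposition \ref{uniform estimates} with the heat-semigroup Duhamel formula for the incompressible part $\mathcal{P}u$, and then perform a time-weighted splitting. Integration in time of \eqref{localization estimates} yields, for each dyadic block $j$,
$$\|(\tfrac{1}{\sqrt{\kappa}}a_{j},\nabla a_{j},\mathcal{Q}u_{j})(t)\|_{L^{2}}\lesssim e^{-c 2^{2j}t}\|(\tfrac{1}{\sqrt{\kappa}}a_{0,j},\nabla a_{0,j},\mathcal{Q}u_{0,j})\|_{L^{2}}+\int_{0}^{t}e^{-c 2^{2j}(t-s)}\|N_{j}(s)\|_{L^{2}}\,ds,$$
and an analogous bound for $\mathcal{P}u_{j}$. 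Multiplying by $2^{j(|\alpha|-\sigma)}$ and taking $\sup_{j}$ reconstructs $\|D^{\alpha}(\cdot)(t)\|_{\dot{B}^{-\sigma}_{2,\infty}}$. The linear contribution is controlled using $2^{j|\alpha|}e^{-c 2^{2j}t}\lesssim t^{-|\alpha|/2}$, which after multiplication by $t^{|\alpha|/2}$ reproduces exactly the first term on the right-hand side of \eqref{3}.

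For the Duhamel/nonlinear part I split the time integral at $t/2$. On $[0,t/2]$ the heat factor still supplies $(t-s)^{-|\alpha|/2}\lesssim t^{-|\alpha|/2}$, so after multiplication by $t^{|\alpha|/2}$ it suffices to bound $\int_{0}^{t}\|N(s)\|_{\dot{B}^{-\sigma}_{2,\infty}}\,ds$. The product laws in $\dot{B}^{-\sigma}_{2,\infty}$, valid under $1-d/2<\sigma\leq d/2$, let me estimate these nonlinearities by the $L^{\infty}_{t}(\dot{B}^{-\sigma}_{2,\infty})$ norm supplied by Lemma \ref{lem4.1} paired with the critical-regularity quantities $\mathcal{E}_{t}+\mathcal{W}_{t}+\mathcal{V}_{t}$ obtained from Theorem \ref{thm2.1}; this delivers the second term on the right-hand side.

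On $[t/2,t]$ I use $s\sim t$ and the very definitions $\|D^{\alpha}(\tfrac{1}{\sqrt{\kappa}}a,\nabla a,u)(s)\|_{\dot{B}^{-\sigma}_{2,\infty}}\leq s^{-|\alpha|/2}\mathcal{X}_{t}$ and $\|D^{\alpha}v(s)\|_{\dot{B}^{-\sigma}_{2,\infty}}\leq s^{-|\alpha|/2}\mathcal{Y}_{t}$, treating the nonlinearities by type. Couplings between $v$, $\mathcal{P}u$, and their error $\tilde v=\mathcal{P}u-v$ produce the $(\mathcal{V}_{t}+\mathcal{D}_{t})\mathcal{Y}_{t}$ and $\mathcal{W}_{t}\mathcal{X}_{t}$ contributions via paraproduct decomposition. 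Quasi-linear terms carrying $\sqrt{\kappa}$ (from $f$ and $g_{4}$) are handled by the commutator symmetrization already performed in Proposition \ref{uniform estimates}, carried out at the localized level \emph{before} the weight $2^{j(|\alpha|-\sigma)}$ is applied so that the $\sqrt{\kappa}$ cancellation is preserved. Purely dispersive self-interactions ($\mathcal{Q}u\otimes\mathcal{Q}u$, $\nabla a\otimes\nabla a$, $\tilde{G}(\kappa^{-1/2}a)\nabla a$) are bounded by Lemma \ref{Strichartz estimates}, which furnishes the $\kappa^{-\delta}\mathcal{D}_{t}\mathcal{X}_{t}$ term.

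The main obstacle, and the source of the unusual term $\kappa^{-\delta/(\alpha-\sigma)}t^{-\epsilon}\mathcal{V}_{t}\mathcal{X}_{t}$, is the mixed incompressible/dispersive interaction $\mathcal{P}u\cdot\nabla a$ (and its analogue $v\cdot\nabla\mathcal{Q}u$): here $\mathcal{P}u$ carries no $\kappa$-smallness at all, while $\nabla a$ admits two distinct controls, namely $\kappa^{-\delta}\mathcal{D}_{t}$ at fixed regularity $d/p$ and $s^{-|\alpha|/2}\mathcal{X}_{t}$ at regularity $|\alpha|-\sigma$. I plan to real-interpolate between these with a parameter $\theta\in(0,1)$ chosen so that the $\kappa$-exponent becomes $\delta(1-\theta)=\delta/(|\alpha|-\sigma)$; balancing the corresponding Besov exponents then forces the induced time-decay exponent to equal precisely $\epsilon=d/2-1-\sigma$. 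The hypotheses $1-d/2<\sigma\leq d/2$ and $|\alpha|\geq\sigma$ are what keeps all interpolation endpoints admissible and ensures that the resulting power of $s$ remains integrable on $[t/2,t]$ after the factor $t^{|\alpha|/2}$ is reinstated; dealing cleanly with this interpolation, and accommodating the $d=2$ subtleties in \eqref{pair}, is the main technical chore.
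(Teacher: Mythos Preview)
Your overall architecture---localized energy/Duhamel representation, the time splitting at $t/2$, extraction of $t^{-|\alpha|/2}$ from the heat kernel on $[0,t/2]$, and product estimates in $\dot{B}^{-\sigma}_{2,\infty}$ for the resulting $L^{1}_{t}$ integral---matches the paper and is correct. The handling on $[t/2,t]$ of the purely dispersive and quasi-linear pieces via Strichartz and commutator symmetrization is also in line with the paper's argument.

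The gap is in your treatment of the paraproduct piece $T_{v}\nabla a$ on $[t/2,t]$. Your plan to ``real-interpolate $\nabla a$ between the control $\kappa^{-\delta}\mathcal{D}_{t}$ at regularity $d/p$ and the control $s^{-|\alpha|/2}\mathcal{X}_{t}$ at regularity $|\alpha|-\sigma$'' does not fit the structure of this term. In $D^{\alpha}T_{v}\nabla a$ the derivative $D^{\alpha}$ is forced onto the high-frequency factor $\nabla a$, while $v$ sits at low frequency; interpolating $\nabla a$ alone still leaves only $\mathcal{V}_{t}$ available for $v$, never $\mathcal{Y}_{t}$. A single interpolated bound therefore cannot produce the \emph{two} separate contributions $\mathcal{D}_{t}\mathcal{Y}_{t}$ and $\kappa^{-\delta/(\alpha-\sigma)}t^{-\epsilon}\mathcal{V}_{t}\mathcal{X}_{t}$ that both appear in \eqref{3} and both originate from this very paraproduct. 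Note also that your formula $1-\theta=1/(|\alpha|-\sigma)$ requires $|\alpha|>\sigma+1$, strictly stronger than the stated hypothesis $|\alpha|\geq\sigma$.

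What the paper actually does here is a $\kappa$-dependent \emph{frequency splitting} of the low-frequency factor in the paraproduct:
\[
\dot{S}_{j}v\,\dot{\Delta}_{j}\nabla a=\sum_{2^{j-l}\leq\kappa^{\eta}}\dot{\Delta}_{l}v\,\dot{\Delta}_{j}\nabla a+\sum_{2^{j-l}>\kappa^{\eta}}\dot{\Delta}_{l}v\,\dot{\Delta}_{j}\nabla a.
\]
In the first sum the gap $j-l$ is bounded by $\eta\log_{2}\kappa$, so one may transfer $|\alpha|-\sigma$ derivatives from $\nabla a$ to $v$ at the price of a factor $2^{(\alpha-\sigma)(j-l)}\leq\kappa^{(\alpha-\sigma)\eta}$ and then pair $\|D^{\alpha}v\|_{L^{\infty}_{[t/2,t]}(\dot{B}^{-\sigma}_{2,\infty})}\lesssim t^{-|\alpha|/2}\mathcal{Y}_{t}$ with the Strichartz bound $\|\nabla a\|_{L^{2}_{t}L^{\infty}}\lesssim\kappa^{-\delta}\mathcal{D}_{t}$. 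In the second sum the large frequency gap itself supplies $2^{-(j-l)}\leq\kappa^{-\eta}$, and one keeps $D^{\alpha}$ on $\nabla a$ to pair $\mathcal{V}_{t}$ with $\mathcal{X}_{t}$. Choosing $\eta=\delta/(\alpha-\sigma)$ makes the $\kappa$-power in the first piece vanish and fixes the exponent $\kappa^{-\delta/(\alpha-\sigma)}$ in the second. This decomposition---splitting the sum over $l$ at a $\kappa$-dependent threshold, not interpolating $\nabla a$ between two norms---is the missing ingredient in your proposal.
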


\begin{proof}
Before the detailed calculations, for clarify, we write  $m$ and $h$ to represent
$$m=(\frac{1}{\sqrt{\kappa}}a,\nabla a, \cQ u) \quad\mathrm{and}\quad h=\frac{1}{\sqrt{\kappa}}f+\cQ g-\cQ g_{4}+\nabla(u\cdot\nabla a).$$
Applying derivatives $D^\alpha$ on the variable and repeating the energy under localization in (\ref{energy1}) yields
\begin{multline*}
\frac{d}{dt}\|D^{\alpha}m_{j}\|_{L^2}+2^{2j}\|D^{\alpha}m_{j}\|_{L^2}\\
\lesssim\|D^{\alpha}h_{j}\|_{L^{2}}-\sqrt{\kappa}2^{j}\|[D^{\alpha}\ddj,\tilde{\psi}(\kappa^{-\frac{1}{2}}a)](\Delta a+\mathrm{div}\mathcal{Q}u)\|_{L^{2}}.
\end{multline*}
Then taking advantages of the Gronwall inequality and notice the following estimate
\begin{eqnarray*}
e^{-2^{2j}t}\|D^{\alpha}(\frac{1}{\sqrt{\kappa}}a_{0,j},\nabla a_{0,j}, \cQ u_{0,j})\|_{L^2}
&\lesssim& e^{-2^{2j}t}2^{|\alpha| j}\|(\frac{1}{\sqrt{\kappa}}a_{0,j},\nabla a_{0,j}, \cQ u_{0,j})\|_{L^2}\\
&\lesssim& e^{-c2^{2j}t}t^{-\frac{|\alpha|}{2}}\|(\frac{1}{\sqrt{\kappa}}a_{0,j},\nabla a_{0,j}, \cQ u_{0,j})\|_{L^2}
\end{eqnarray*}
where we use $x^{\eta}e^{-x}\leq e^{-cx}$ for $\eta>0$, we immediately have
\begin{multline}
\|D^{\alpha}m_{j}\|_{L^2}\lesssim t^{-\frac{|\alpha|}{2}}\|m_{0,j}\|_{L^2}+\int^{t}_{0}e^{-2^{2j}(t-s)}\|D^{\alpha}h_{j}\|_{L^{2}}ds\\
+\sqrt{\kappa}\int^{t}_{0}e^{-2^{2j}(t-s)}2^{j}\|[D^{\alpha}\ddj,\tilde{\psi}(\kappa^{-\frac{1}{2}}a)](\Delta a+\mathrm{div}\mathcal{Q}u)\|_{L^{2}}ds.
\end{multline}
Then after taking $\dot{B}^{-\sigma}_{2,\infty}$ norm, the following inequality holds
\begin{multline}\label{decay1}
\|D^{\alpha}m\|_{\dot{B}^{-\sigma}_{2,\infty}}\lesssim t^{-\frac{|\alpha|}{2}}\|m_{0}\|_{\dot{B}^{-\sigma}_{2,\infty}}
+\sup_{j}2^{-\sigma j}\int^{t}_{0}e^{-2^{2j}(t-s)}\|D^{\alpha}h_{j}\|_{L^{2}}ds\\
+\sqrt{\kappa}\sup_{j}2^{(-\sigma+1) j}\int^{t}_{0}e^{-2^{2j}(t-s)}\|[D^{\alpha}\ddj,\tilde{\psi}(\kappa^{-\frac{1}{2}}a)](\Delta a+\mathrm{div}\mathcal{Q}u)\|_{L^{2}}ds.
\end{multline}
Very similarly, we obtain for $\cP u$ such that
\begin{multline}\label{decay2}
\|D^{\alpha}\cP u\|_{\dot{B}^{-\sigma}_{2,\infty}}\lesssim t^{-\frac{|\alpha|}{2}}\|\cP u_{0}\|_{\dot{B}^{-\sigma}_{2,\infty}}
+\sup_{j}2^{-\sigma j}\int^{t}_{0}e^{-2^{2j}(t-s)}\|D^{\alpha}\cP g_{j}\|_{L^{2}}ds
\end{multline}
and we conclude with
\begin{eqnarray}\label{linear decay}
\|D^{\alpha}(\frac{1}{\sqrt{\kappa}}a,\nabla a, u)\|_{\dot{B}^{-\sigma}_{2,\infty}}
\lesssim t^{-\frac{|\alpha|}{2}}\|(\frac{1}{\sqrt{\kappa}}a_{0},\nabla a_{0}, u_{0})\|_{\dot{B}^{-\sigma}_{2,\infty}}+I_{1}+I_{2}
\end{eqnarray}
where $I_{1}$ represents nonlinearities of integral of time on $[0,\frac{t}{2}]$ while $I_{2}$ is the integral on $[\frac{t}{2},t]$. Now we start to deal with different time integral intervals respectively.
\subsubsection{Nonlinear estimates on $[0,\frac{t}{2}]$}
The estimates on $[0,\frac{t}{2}]$ is not problematic, we shall take the term $\nabla(u\cdot\nabla a)$ in $h$ as an example where we have
\begin{eqnarray}
&&\sup_{j}2^{-\sigma j}\int^{\frac{t}{2}}_{0}e^{-2^{2j}(t-s)}\|D^{\alpha}\ddj\nabla(u\cdot\nabla a)\|_{L^{2}}ds\\
\nonumber&\lesssim&\sup_{j}2^{-\sigma j}\int^{\frac{t}{2}}_{0}(t-s)^{-(|\alpha|+\sigma)}((t-s)2^{2j})^{|\alpha|+\sigma} e^{-2^{2j}(t-s)}\|\ddj\nabla(u\cdot\nabla a)\|_{L^{2}}ds\\
\nonumber&\lesssim&t^{-\frac{|\alpha|}{2}}\|u\cdot\nabla a\|_{\tilde{L}^{1}_{t}(\dot{B}^{-\sigma+1}_{2,\infty})}.
\end{eqnarray}
Since that
$$\|u\cdot\nabla a\|_{\tilde{L}^{1}_{t}(\dot{B}^{-\sigma+1}_{2,\infty})}\lesssim\|u\|_{L^{\infty}_{t}(\dot{B}^{-\sigma}_{2,\infty})}\|\nabla a\|_{L^{1}_{t}(\dot{B}^{\frac{d}{2}+1}_{2,1})}
+\|\nabla a\|_{L^{\infty}_{t}(\dot{B}^{-\sigma}_{2,\infty})}\|u\|_{L^{1}_{t}(\dot{B}^{\frac{d}{2}+1}_{2,1})}.$$
Therefore, calculations as in Section 3 allows us to have
\begin{multline*}
\sup_{j}2^{-\sigma j}\int^{\frac{t}{2}}_{0}e^{-2^{2j}(t-s)}\|D^{\alpha}\ddj(u\cdot\nabla a)\|_{L^{2}}ds\\
\lesssim t^{-\frac{|\alpha|}{2}}\|(\nabla a,u)\|_{L^\infty_{t}(\dot{B}^{-\sigma}_{2,\infty})}(\mathcal{E}_{t}+\mathcal{W}_{t}+\mathcal{V}_{t}).
\end{multline*}
So does estimates for those semi-linear ones $u\cdot\nabla u, \nabla(|\nabla a|^{2})$. In terms of the pressure term $\kappa^{-\frac{1}{2}}\tilde{G}(\kappa^{-\frac{1}{2}}a)\nabla a$, we have
\begin{eqnarray*}
\sup_{j}2^{-\sigma j}\int^{\frac{t}{2}}_{0}e^{-2^{2j}(t-s)}\|D^{\alpha}\ddj(\tilde{G}(\kappa^{-\frac{1}{2}}a)\nabla a)\|_{L^{2}}ds
\lesssim t^{-\frac{|\alpha|}{2}}\|\tilde{G}(\kappa^{-\frac{1}{2}}a)\nabla a\|_{\tilde{L}^1_{t}(\dot{B}^{-\sigma}_{2,\infty})}\\
\lesssim \sqrt{\kappa} t^{-\frac{|\alpha|}{2}}\|\frac{a}{\sqrt{\kappa}}\|_{L^\infty_{t}(\dot{B}^{-\sigma}_{2,\infty})}\|\frac{a}{\sqrt{\kappa}}\|_{L^1_{t}(\dot{B}^{\frac{d}{2}+1}_{2,1})}
\end{eqnarray*}
which yields
\begin{multline*}
\kappa^{-\frac{1}{2}}\sup_{j}2^{-\sigma j}\int^{\frac{t}{2}}_{0}e^{-2^{2j}(t-s)}\|D^{\alpha}\ddj(\tilde{G}(\kappa^{-\frac{1}{2}}a)\nabla a)\|_{L^{2}}ds
\lesssim t^{-\frac{|\alpha|}{2}}\|(\nabla a,u)\|_{L^\infty_{t}(\dot{B}^{-\sigma}_{2,\infty})}\mathcal{E}_{t}.
\end{multline*}
For quasi-linear nonlinearity $g_{2}$, we pay attention to $\mathrm{div}(2\tilde{\mu}(\kappa^{-\frac{1}{2}}a) D(u))$ where
\begin{multline*}
\|\tilde{\mu}(\kappa^{-\frac{1}{2}}a) D(u)\|_{\tilde{L}^{1}_{t}(\dot{B}^{-\sigma+1}_{2,\infty})}
\lesssim \kappa^{-\frac{1}{2}}\big(\|a\|_{L^{\infty}_{t}(\dot{B}^{-\sigma+1}_{2,\infty})}\|D(u)\|_{L^{1}_{t}(\dot{B}^{\frac{d}{2}}_{2,1})}\\
+\|D(u)\|_{L^{\infty}_{t}(\dot{B}^{-\sigma-1}_{2,\infty})}\|a\|_{L^{1}_{t}(\dot{B}^{\frac{d}{2}+2}_{2,1})}\big).
\end{multline*}
which implies
\begin{multline*}
\sup_{j}2^{-\sigma j}\int^{\frac{t}{2}}_{0}e^{-2^{2j}(t-s)}\|D^{\alpha}\ddj\mathrm{div}(2\tilde{\mu}(\kappa^{-\frac{1}{2}}a) D(u))\|_{L^{2}}ds\\
\lesssim t^{-\frac{|\alpha|}{2}}\|(\frac{1}{\sqrt{\kappa}}a,\nabla a,u)\|_{L^\infty_{t}(\dot{B}^{-\sigma}_{2,\infty})}(\mathcal{E}_{t}+\mathcal{W}_{t}+\mathcal{V}_{t}).
\end{multline*}
Similarly we could estimates other terms. Finally we turn to commutators, here, we shall take $[D^{\alpha}\ddj,\tilde{\psi}(\kappa^{-\frac{1}{2}}a)]\Delta a$ as an example while the other one enjoys exact same calculations. Indeed we have
$$[D^{\alpha}\ddj,\tilde{\psi}(\kappa^{-\frac{1}{2}}a)]\Delta a=D^{\alpha}\ddj(\tilde{\psi}(\kappa^{-\frac{1}{2}}a)\Delta a)-\tilde{\psi}(\kappa^{-\frac{1}{2}}a)D^{\alpha}\ddj\Delta a.$$
The former one enjoys similar calculations as quasi-linear ones, for the latter one, we have
\begin{multline*}
\sqrt{\kappa}\sup_{j}2^{-\sigma j}\int^{\frac{t}{2}}_{0}e^{-2^{2j}(t-s)}\|\tilde{\psi}(\kappa^{-\frac{1}{2}}a)D^{\alpha}\ddj\Delta a\|_{L^{2}}ds\\
\lesssim \sqrt{\kappa}t^{-\frac{|\alpha|}{2}}\sup_{j}2^{-\sigma j}\int^{\frac{t}{2}}_{0}\|\tilde{\psi}(\kappa^{-\frac{1}{2}}a)\|_{L^{\infty}}ds\|\ddj\Delta a\|_{L^2}\\
\lesssim t^{-\frac{|\alpha|}{2}}\|a\|_{L^{r_{1}}_{t}L^\infty}\|\Delta a\|_{\tilde{L}^{r_{2}}_{t}(\dot{B}^{-\sigma}_{2,\infty})}
\end{multline*}
where $\frac{1}{r_{1}}=\frac{\sigma+\frac{d}{2}-1}{\sigma+\frac{d}{2}+1}, \frac{1}{r_{2}}=\frac{2}{\sigma+\frac{d}{2}+1}$. Keep in mind the following interpolation
$$\|a\|_{L^{r_{1}}_{t}L^\infty}\lesssim\|\nabla a\|_{L^{r_{1}}_{t}(\dot{B}^{\frac{d}{2}-1}_{2,1})}
\lesssim\|\nabla a\|^{\theta_{1}}_{L^{\infty}_{t}(\dot{B}^{-\sigma}_{2,\infty})}\|\nabla a\|^{1-\theta_{1}}_{L^{1}_{t}(\dot{B}^{\frac{d}{2}+1}_{2,1})}$$
and
$$\|\Delta a\|_{\tilde{L}^{r_{2}}_{t}(\dot{B}^{-\sigma}_{2,\infty})}\lesssim\|\nabla a\|^{\theta_{2}}_{L^{\infty}_{t}(\dot{B}^{-\sigma}_{2,\infty})}\|\nabla a\|^{1-\theta_{2}}_{L^{1}_{t}(\dot{B}^{\frac{d}{2}+1}_{2,1})}$$
with $\theta_{1}=1-\frac{1}{r_{1}}, \theta_{2}=1-\frac{1}{r_{2}}$, we arrive at
\begin{multline*}
\sqrt{\kappa}\sup_{j}2^{-\sigma j}\int^{\frac{t}{2}}_{0}e^{-2^{2j}(t-s)}\|\tilde{\psi}(\kappa^{-\frac{1}{2}}a)D^{\alpha}\ddj\Delta a\|_{L^{2}}ds\\
\lesssim t^{-\frac{|\alpha|}{2}}\|\nabla a\|_{L^{\infty}_{t}(\dot{B}^{-\sigma}_{2,\infty})}(\mathcal{E}_{t}+\mathcal{W}_{t}+\mathcal{V}_{t}).
\end{multline*}
Similar calculations on another commutator allows us to finish nonlinear estimates on $[0,\frac{t}{2}]$ and we finish with
\begin{eqnarray}\label{I1}
I_{1}&\lesssim&t^{-\frac{|\alpha|}{2}}\|(\frac{1}{\sqrt{\kappa}}a,\nabla a, u)\|_{L^{\infty}_{t}(\dot{B}^{-\sigma}_{2,\infty})}(\mathcal{E}_{t}+\mathcal{W}_{t}+\mathcal{V}_{t}).
\end{eqnarray}

\subsubsection{Nonlinear estimates on $[\frac{t}{2},t]$}
Next we term to bound $[\frac{t}{2},t]$. Again, we start with $u\cdot\nabla a=\mathcal{Q}u\cdot\nabla a+(\mathcal{P}u-v)\cdot\nabla a+v\cdot\nabla a$, for the first one, we have
\begin{eqnarray}
\nonumber&&\sup_{j}2^{-\sigma j}\int^{t}_{\frac{t}{2}}e^{-2^{2j}(t-s)}\|D^{\alpha}\ddj\nabla(\mathcal{Q}u\cdot\nabla a)\|_{L^{2}}ds\\
\nonumber&\lesssim&\sup_{j}2^{-\sigma j}\big(\int^{t}_{\frac{t}{2}}e^{-2^{2j}(t-s)}ds\big)^{\frac{1}{2}}\|D^{\alpha}\ddj\nabla(\mathcal{Q}u\cdot\nabla a)\|_{L^{2}_{[\frac{t}{2},t]}L^{2}}.
\end{eqnarray}
Observe that for fixed $c>0$ and $\gamma\in[1,\infty]$
$$\big(\int^{t}_{\frac{t}{2}}e^{-c2^{2j}(t-s)}ds\big)^{\frac{1}{\gamma}}\leq C2^{-\frac{2}{\gamma}j}$$
for some positive $C$, there naturally holds
\begin{eqnarray}
\sup_{j}2^{-\sigma j}\int^{t}_{\frac{t}{2}}e^{-2^{2j}(t-s)}\|D^{\alpha}\ddj\nabla(\mathcal{Q}u\cdot\nabla a)\|_{L^{2}}ds
\lesssim\|D^{\alpha}(\mathcal{Q}u\cdot\nabla a)\|_{\tilde{L}^{2}_{[\frac{t}{2},t]}(\dot{B}^{-\sigma}_{2,\infty})}.
\end{eqnarray}
Now Bony decomposition
$$\mathcal{Q}u\cdot\nabla a=T_{\mathcal{Q}u}\nabla a+R(\mathcal{Q}u,\nabla a)+T_{\nabla a}\mathcal{Q}u,$$
we have for all $\alpha\geq\sigma$
\begin{eqnarray}
\|D^{\alpha}T_{\mathcal{Q}u}\nabla a\|_{\tilde{L}^{2}_{[\frac{t}{2},t]}(\dot{B}^{-\sigma}_{2,\infty})}
\lesssim\|\mathcal{Q}u\|_{L^{2}_{t}L^\infty}\|D^{\alpha}\nabla a\|_{L^{\infty}_{[\frac{t}{2},t]}(\dot{B}^{-\sigma}_{2,\infty})};
\end{eqnarray}
\begin{eqnarray}
\|D^{\alpha}R(\mathcal{Q}u,\nabla a)\|_{\tilde{L}^{2}_{[\frac{t}{2},t]}(\dot{B}^{-\sigma}_{2,\infty})}
\lesssim\|\mathcal{Q}u\|_{\tilde{L}^{2}_{t}(\dot{B}^{0}_{\infty,1})}\|D^{\alpha}\nabla a\|_{L^{\infty}_{[\frac{t}{2},t]}(\dot{B}^{-\sigma}_{2,\infty})};
\end{eqnarray}
\begin{eqnarray}
\|D^{\alpha}T_{\nabla a}\mathcal{Q}u\|_{\tilde{L}^{2}_{[\frac{t}{2},t]}(\dot{B}^{-\sigma}_{2,\infty})}
\lesssim\|\nabla a\|_{L^{2}_{t}L^\infty}\|D^{\alpha}\mathcal{Q}u\|_{L^{\infty}_{[\frac{t}{2},t]}(\dot{B}^{-\sigma}_{2,\infty})}.
\end{eqnarray}
Now for the first one, if $\alpha\geq\sigma$, there holds
\begin{eqnarray}
\nonumber\|D^{\alpha}\mathcal{Q}u\cdot\nabla a\|_{\tilde{L}^{2}_{[\frac{t}{2},t]}(\dot{B}^{-\sigma}_{2,\infty})}
&\lesssim&t^{-\frac{|\alpha|}{2}}\|\nabla a\|_{\tilde{L}^{2}(\dot{B}^{0}_{\infty,1})}\sup_{[\frac{t}{2},t]}s^{\frac{\alpha}{2}}\|D^{\alpha}\mathcal{Q}u\|_{\dot{B}^{-\sigma}_{2,\infty}}.
\end{eqnarray}
Thanks to (\ref{infty}), we deduce
\begin{eqnarray}\label{popo}
\sup_{j}2^{-\sigma j}\int^{t}_{\frac{t}{2}}e^{-2^{2j}(t-s)}\|D^{\alpha}\ddj\nabla(\mathcal{Q}u\cdot\nabla a)\|_{L^{2}}ds&\lesssim&\kappa^{-\delta}t^{-\frac{|\alpha|}{2}}\mathcal{D}_{t}\mathcal{X}_{t}.
\end{eqnarray}

Now we turn to consider $(\mathcal{P}u-v)\cdot\nabla a$, in fact, it is similarly handled as (\ref{popo}) where
\begin{eqnarray*}
\sup_{j}2^{-\sigma j}\int^{t}_{\frac{t}{2}}e^{-2^{2j}(t-s)}\|D^{\alpha}\ddj\nabla((\mathcal{P}u-v)\cdot\nabla a)\|_{L^{2}}ds&\lesssim&t^{-\frac{|\alpha|}{2}}\mathcal{W}_{t}\mathcal{X}_{t}.
\end{eqnarray*}

As for $v\cdot\nabla a$, again we have Bony decomposition
$$v\cdot\nabla a=T_{v}\nabla a+R(v,\nabla a)+T_{\nabla a}v.$$
For the first one, one shall decompose the frequency into
$$\ddj T_{v}\nabla a=\dot{S}_{j}v\ddj \nabla a=\sum_{2^{j-l}\leq\kappa^{\eta}}\dot{\Delta}_{l}v\ddj \nabla a+\sum_{2^{j-l}\geq\kappa^{\eta}}\dot{\Delta}_{l}v\ddj \nabla a$$
and it holds that
\begin{eqnarray}
&&\sup_{j}2^{-\sigma j}\int^{t}_{\frac{t}{2}}e^{-2^{2j}(t-s)}\|D^{\alpha} \nabla(\sum_{2^{j-l}\leq\kappa^{\eta}}\dot{\Delta}_{l}v\ddj \nabla a)\|_{L^{2}}\\
\nonumber&\lesssim&\sup_{j}2^{-\sigma j}\sum_{2^{j-l}\leq\kappa^{\eta}}\|\dot{\Delta}_{l}v\|_{L^{\infty}_{[\frac{t}{2},t]}L^{2}}\|\ddj \nabla D^{\alpha} a\|_{L^{2}_{[\frac{t}{2},t]}L^{\infty}}\\
\nonumber&\lesssim&\sup_{j}\sum_{2^{j-l}\leq\kappa^{\eta}}2^{(\alpha-\sigma)(j-l)}2^{(\alpha-\sigma)l}\|\dot{\Delta}_{l}v\|_{L^{\infty}_{[\frac{t}{2},t]}L^{2}}2^{-\alpha j}\|\ddj \nabla D^{\alpha} a\|_{L^{2}_{[\frac{t}{2},t]}L^{\infty}}\\
\nonumber&\lesssim&\kappa^{(\alpha-\sigma)\eta}\|D^{\alpha}v\|_{L^{\infty}_{[\frac{t}{2},t]}(\dot{B}^{-\sigma}_{2,\infty})}\| \nabla a\|_{L^{2}_{[\frac{t}{2},t]}L^\infty}.
\end{eqnarray}
We conclude for any $t\geq1$
\begin{multline}
\sum_{j}2^{-\sigma j}\int^{t}_{\frac{t}{2}}e^{-2^{2j}(t-s)}\|\nabla(\sum_{j-l\leq\kappa^{\eta}}\dot{\Delta}_{l}v\ddj \nabla D^{\alpha} a)\|_{L^{2}}ds
\lesssim\kappa^{(\alpha-\sigma)\eta-\delta}t^{-\frac{|\alpha|}{2}}\mathcal{Y}_{t}\mathcal{D}_{t}.
\end{multline}
On the other hand, we have
\begin{eqnarray}
&&\sup_{j}2^{-\sigma j}\int^{t}_{\frac{t}{2}}e^{-2^{2j}(t-s)}\|\nabla(\sum_{j-l\geq\kappa^{\eta}}\dot{\Delta}_{l}v\ddj \nabla D^{\alpha} a)\|_{L^{2}}ds\\
\nonumber&\lesssim&\sup_{j}2^{(-\sigma-1) j}\sum_{j-l\geq\kappa^{\eta}}\|\dot{\Delta}_{l}v\|_{L^{\infty}_{[\frac{t}{2},t]}L^{\infty}}\|\ddj \nabla D^{\alpha} a\|_{L^{\infty}_{[\frac{t}{2},t]}L^{2}}\\
\nonumber&\lesssim&\sup_{j}\sum_{2^{j-l}\geq\kappa^{\eta}}2^{-(j-l)}2^{-l}\|\dot{\Delta}_{l}v\|_{L^{\infty}_{[\frac{t}{2},t]}L^{\infty}}2^{-\sigma j}\|\ddj \nabla D^{\alpha} a\|_{L^{\infty}_{[\frac{t}{2},t]}L^{2}}\\
\nonumber&\lesssim&\kappa^{-\eta}\|v\|_{L^{\infty}_{[\frac{t}{2},t]}(\dot{B}^{\frac{d}{2}-1}_{2,\infty})}\| \nabla D^{\alpha} a\|_{L^{\infty}_{[\frac{t}{2},t]}(\dot{B}^{-\sigma}_{2,\infty})}
\lesssim\kappa^{-\eta}t^{-\frac{|\alpha|}{2}}t^{-\epsilon}\mathcal{V}_{t}\mathcal{X}_{t}
\end{eqnarray}
where $\epsilon=\frac{d}{2}-1-\sigma>0$. Finally, we have
\begin{eqnarray}
&&\sup_{j}2^{-\sigma j}\int^{t}_{\frac{t}{2}}e^{-2^{2j}(t-s)}\|\ddj D^{\alpha} \nabla(R(v,\nabla a)+T_{\nabla a}v)\|_{L^{2}}ds\\
\nonumber&\lesssim&\|D^{\alpha} R(v,\nabla a)+D^{\alpha} T_{\nabla a}v\|_{L^{2}_{[\frac{t}{2},t]}(\dot{B}^{-\sigma}_{2,\infty})}\\
\nonumber&\lesssim&\|\nabla a\|_{L^{2}_{[\frac{t}{2},t]}(\dot{B}^{0}_{\infty,1})}\|D^{\alpha} v\|_{L^{\infty}_{[\frac{t}{2},t]}(\dot{B}^{-\sigma}_{2,\infty})}\\
\nonumber&\lesssim&t^{-\frac{|\alpha|}{2}}\kappa^{-\delta}\mathcal{D}_{t}\mathcal{X}_{t}.
\end{eqnarray}

Consequently, by selecting $\eta=\frac{\delta}{\alpha-\sigma}$, we finally have
\begin{multline}
\sup_{j}2^{-\sigma j}\int^{t}_{0}e^{-2^{2j}(t-s)}\|D^{\alpha}\ddj\nabla(v\cdot\nabla a)\|_{L^{2}}ds\\ \lesssim
t^{-\frac{|\alpha|}{2}}\big(\mathcal{Y}_{t}\mathcal{D}_{t}+\kappa^{-\frac{\delta}{\alpha-\sigma}}t^{-\epsilon}\mathcal{V}_{t}\mathcal{X}_{t}+\kappa^{-\delta}\mathcal{D}_{t}\mathcal{X}_{t}\big).
\end{multline}

So does estimates for those semi-linear ones $\nabla(|\nabla a|^{2}),\kappa^{-\frac{1}{2}}\tilde{G}(\kappa^{-\frac{1}{2}}a)\nabla a$ while for $v\cdot\nabla v$, there holds
 \begin{multline}
\sup_{j}2^{-\sigma j}\int^{t}_{0}e^{-2^{2j}(t-s)}\|D^{\alpha}\ddj(v\cdot\nabla v)\|_{L^{2}}ds\lesssim\|D^{\alpha} v^{2}\|_{L^{2}_{[\frac{t}{2},t]}(\dot{B}^{-\sigma}_{2,\infty})}\\
\lesssim\|v\|_{\tilde{L}^{2}_{[\frac{t}{2},t]}\dot{B}^{\frac{d}{2}}_{2,1}}\|D^{\alpha} v\|_{L^{\infty}_{[\frac{t}{2},t]}(\dot{B}^{-\sigma}_{2,\infty})}
\lesssim t^{-\frac{|\alpha|}{2}}\mathcal{V}_{t}\mathcal{Y}_{t}.
\end{multline}
Next we focus on those quasi-linear ones and take $\nabla(\tilde{\lambda}(\kappa^{-\frac{1}{2}}a)\mathrm{div}u)$ as an example. We have
\begin{eqnarray}
&&\sup_{j}2^{-\sigma j}\int^{t}_{\frac{t}{2}}e^{-2^{2j}(t-s)}\|\ddj D^{\alpha}\nabla(\tilde{\lambda}(\kappa^{-\frac{1}{2}}a)\mathrm{div}u)\|_{L^{2}}ds\\
\nonumber&\lesssim&\|D^{\alpha}(\tilde{\lambda}(\kappa^{-\frac{1}{2}}a)\mathrm{div}u)\|_{L^{\infty}_{[\frac{t}{2},t]}(\dot{B}^{-\sigma-1}_{2,\infty})}.
\end{eqnarray}
Since that
\begin{eqnarray}
&&\|D^{\alpha}(\tilde{\lambda}(\kappa^{-\frac{1}{2}}a)\mathrm{div}u)\|_{L^{\infty}_{[\frac{t}{2},t]}(\dot{B}^{-\sigma-1}_{2,\infty})}\\
\nonumber&\lesssim&\|D^{\alpha-1}\tilde{\lambda}(\kappa^{-\frac{1}{2}}a)\mathrm{div}\mathcal{Q}u\|_{L^{\infty}_{[\frac{t}{2},t]}(\dot{B}^{-\sigma}_{2,\infty})}
+\|\tilde{\lambda}(\kappa^{-\frac{1}{2}}a)D^{\alpha-1}\mathrm{div}\mathcal{Q}u\|_{L^{\infty}_{[\frac{t}{2},t]}(\dot{B}^{-\sigma}_{2,\infty})}\\
\nonumber&\lesssim&\kappa^{-\frac{1}{2}}\big(\|a\|_{L^{\infty}_{[\frac{t}{2},t]}(\dot{B}^{\frac{d}{2}}_{2,1})}\|D^{\alpha}\mathcal{Q}u\|_{L^{\infty}_{[\frac{t}{2},t]}(\dot{B}^{-\sigma}_{2,\infty})}
+\|D^{\alpha-1}a\|_{L^{\infty}_{[\frac{t}{2},t]}(\dot{B}^{\frac{d}{2}}_{2,1})}\|\mathrm{div}\mathcal{Q}u\|_{L^{\infty}_{[\frac{t}{2},t]}(\dot{B}^{-\sigma}_{2,\infty})}\big).
\end{eqnarray}
we have
\begin{eqnarray}
\sup_{j}2^{-\sigma j}\int^{t}_{\frac{t}{2}}e^{-2^{2j}(t-s)}\|\ddj D^{\alpha}\nabla(\tilde{\lambda}(\kappa^{-\frac{1}{2}}a)\mathrm{div}u)\|_{L^{2}}ds
\lesssim\kappa^{-\frac{1}{2}}t^{-\frac{|\alpha|}{2}}\mathcal{D}_{t}\mathcal{X}_{t}.
\end{eqnarray}

At this end, we turn to commutator. It could be written as
\begin{multline*}
\!\!\!\!\!\![D^{\alpha}\ddj,\tilde{\psi}(\kappa^{-\frac{1}{2}}a)]\Delta a=[D^{\alpha}\ddj,T_{\tilde{\psi}(\kappa^{-\frac{1}{2}}a)}]\Delta a
+D^{\alpha}\ddj(T_{\Delta a}\tilde{\psi}(\kappa^{-\frac{1}{2}}a))+T_{D^{\alpha}\ddj\Delta a}\tilde{\psi}(\kappa^{-\frac{1}{2}}a)\\
+D^{\alpha}\ddj R(\Delta a,\tilde{\psi}(\kappa^{-\frac{1}{2}}a))+R(D^{\alpha}\ddj \Delta a,\tilde{\psi}(\kappa^{-\frac{1}{2}}a))
\triangleq\sum^{5}_{i=1}R_{i}.
\end{multline*}

For $R_{1}$, we have
$$[D^{\alpha}\ddj,T_{\tilde{\psi}(\kappa^{-\frac{1}{2}}a)}]\Delta a=\sum_{k\leq j-2}[D^{\alpha}\ddj,\dot{\Delta}_{k}\tilde{\psi}(\kappa^{-\frac{1}{2}}a)]\Delta \ddj a.$$
Classical commutator estimates under localization indicates
\begin{eqnarray}
\|\sum_{k\leq j-2}[D^{\alpha}\ddj,\dot{\Delta}_{k}\tilde{\psi}(\kappa^{-\frac{1}{2}}a)]\Delta \ddj a\|_{L^{2}}
\nonumber&\lesssim&2^{(\alpha-1) j}\|\nabla\tilde{\psi}(\kappa^{-\frac{1}{2}}a)\|_{L^\infty}\|\Delta \ddj a\|_{L^{2}}
\end{eqnarray}
which implies
\begin{multline*}
\sqrt{\kappa}\sup_{j}2^{(-\sigma+1) j}\int^{\frac{t}{2}}_{0}e^{-2^{2j}(t-s)}\|R_{1}\|_{L^{2}}ds\\
\lesssim\sup_{j}2^{(-\sigma+\alpha-1)j}\|\Delta \ddj a\|_{L^\infty_{[\frac{t}{2},t]}L^{2}}\|\nabla a\|_{L^2_{t}L^\infty}
\lesssim \kappa^{-\delta}t^{-\frac{|\alpha|}{2}}\mathcal{D}_{t}\mathcal{X}_{t}.
\end{multline*}
As for $R_{2}$ and $R_{4}$, we have
\begin{multline*}
\sqrt{\kappa}\sup_{j}2^{(-\sigma+1)j}\int^{\frac{t}{2}}_{0}e^{-2^{2j}(t-s)}\|R_{2}+R_{4}\|_{L^{2}}ds
\lesssim\|R_{2}+R_{4}\|_{\tilde{L}^{2}_{[\frac{t}{2},t]}(\dot{B}^{-\sigma}_{2,\infty})}\\
\lesssim\|D^{\alpha}\Delta a\|_{L^\infty_{[\frac{t}{2},t]}(\dot{B}^{-\sigma-1}_{2,\infty})}\| a\|_{\tilde{L}^2_{[\frac{t}{2},t]}(\dot{B}^{1}_{\infty,1})}
\lesssim \kappa^{-\delta}t^{-\frac{|\alpha|}{2}}\mathcal{D}_{t}\mathcal{X}_{t}.
\end{multline*}
provided $\alpha\geq\sigma$. In consider of $R_{3}, R_{5}$, we actually have
\begin{multline}
\|R_{3}+ R_{5}\|_{L^{2}}
\lesssim\|D^{\alpha}\ddj\Delta a\|_{\dot{B}^{-1}_{2,\infty}}\|\tilde{\psi}(\kappa^{-\frac{1}{2}}a)\|_{\dot{B}^{\frac{d}{p}+1}_{p,1}}
\lesssim\kappa^{-\frac{1}{2}}\|D^{\alpha}\ddj\nabla a\|_{L^2}\|\nabla a\|_{\dot{B}^{\frac{d}{p}}_{p,1}}.
\end{multline}
Therefore, it holds
\begin{multline*}
\sqrt{\kappa}\sup_{j}2^{(-\sigma+1)j}\int^{\frac{t}{2}}_{0}e^{-2^{2j}(t-s)}\|R_{3}+R_{5}\|_{L^{2}}ds\\
\lesssim\|D^{\alpha}\nabla a\|_{L^\infty_{[\frac{t}{2},t]}(\dot{B}^{-\sigma}_{2,\infty})}\| a\|_{\tilde{L}^2_{[\frac{t}{2},t]}(\dot{B}^{\frac{d}{p}}_{p,1})}
\lesssim \kappa^{-\delta}t^{-\frac{|\alpha|}{2}}\mathcal{D}_{t}\mathcal{X}_{t}.
\end{multline*}
Above inequalities indicate that
\begin{eqnarray}\label{I2}
I_{2}&\lesssim&t^{-\frac{|\alpha|}{2}}\big((\mathcal{V}_{t}+\mathcal{D}_{t})\mathcal{Y}_{t}+\mathcal{W}_{t}\mathcal{X}_{t}
+\kappa^{-\frac{\delta}{\alpha-\sigma}}t^{-\epsilon}\mathcal{V}_{t}\mathcal{X}_{t}+\kappa^{-\delta}\mathcal{D}_{t}\mathcal{X}_{t}\big).
\end{eqnarray}
Finally, combining (\ref{linear decay}) with (\ref{I1}), (\ref{I1}) allows us to arrive at  (\ref{3}) by taking the supremum norm of $t$.
\end{proof}

Since that $\mathcal{W}_{t}\leq2C\kappa^{-\delta} e^{C\mathcal{V}_{t}}\mathcal{V}^{2}_{T}$, the fact $\kappa^{-1}\ll1$ enable us to find
$$\mathcal{W}_{t}+\kappa^{-\frac{\delta}{\alpha-\sigma}}t^{-\epsilon}\mathcal{V}_{t}+\kappa^{-\delta}\mathcal{D}_{t}\leq\frac{1}{2}.$$
Therefore notice the fact
$$\mathcal{Y}_{t}\lesssim \sup_{t\in[0,T]}t^{\frac{|\alpha|}{2}}\|\Lambda^{\frac{\alpha-\sigma}{2}}v\|_{L^2}\leq C,$$
where we utilize (\ref{incompressible decay}), the boundedness of $\|(\frac{1}{\sqrt{\kappa}}a,\nabla a, u)\|_{L^{\infty}_{t}(\dot{B}^{-\sigma}_{2,\infty})}$ and $\mathcal{V}_{t},\mathcal{Y}_{t},\mathcal{E}_{t},\mathcal{D}_{t}$ indicates $\mathcal{X}_{t}\leq C_{0}$ where $C_{0}$ depends on initial data and conclude with Theorem \ref{thm2.2}.

\section{Appendix}
In this appendix, we would recall some classical theory concerns Fourier localization technique. The Fourier transform of a function $f\in\mathcal{S}$ (the Schwarz class) is denoted by
$$\widehat{f}(\xi)=\mathcal{F}[f](\xi):=\int_{\mathbb{R}^{d}}f(x)e^{-i\xi\cdot x}dx.$$
For $ 1\leq p\leq \infty$, we denote by $L^{p}=L^{p}(\mathbb{R}^{d})$ the usual Lebesgue space on $\mathbb{R}^{d}$ with the norm $\|\cdot\|_{L^{p}}$.

For convenience of reader, we would like to recall the Littlewood-Paley decomposition, Besov spaces and related analysis tools. The reader is referred to Chap. 2 and Chap. 3 of \cite{BCD} for more details. Let $\chi$ be a smooth function valued in $[0,1]$, such that $\chi$ is supported in the ball
$\mathbf{B}(0,\frac{4}{3})=\{\xi\in\mathbb{R}^{d}:|\xi|\leq\frac{4}{3}\}$. Set $\varphi(\xi)=\chi(\xi/2)-\chi(\xi)$. Then $\varphi$
is supported in the shell $\mathbf{C}(0,\frac{3}{4},\frac{8}{3})=\{\xi\in\mathbb{R}^{d}:\frac{3}{4}\leq|\xi|\leq\frac{8}{3}\}$ so that
$$\sum_{q\in\mathbb{Z}}\varphi(2^{-q}\xi)=1, \quad \forall\xi\in\mathbb{R}^{d}\backslash\{{0}\}.$$

For any tempered distribution $f\in\mathcal{S}'$, one can define the homogeneous dyadic blocks and homogeneous low-frequency cut-
off operators:
\begin{eqnarray*}
&&\dot{\Delta}_{q}f:=\varphi(2^{-q}D)f=\mathcal{F}^{-1}(\varphi(2^{-q}\xi)\mathcal{F}f), \quad q\in\mathbb{Z};
\end{eqnarray*}
\begin{eqnarray*}
&&\dot{S}_{q}f:=\chi(2^{-q}D)f=\mathcal{F}^{-1}(\chi(2^{-q}\xi)\mathcal{F}f), \quad q\in\mathbb{Z}.
\end{eqnarray*}
Furthermore, we have the formal homogeneous decomposition as follows
$$f=\sum_{q\in\mathbb{Z}}\dot{\Delta}_{q}f.$$
Also, throughout the paper, $f^{h}$ and $f^{\ell}$ represent the high frequency part and low frequency part of $f$ respectively where
$$f^{\ell}\triangleq\dot{S}_{q_{0}}f;\quad f^{h}\triangleq(1-\dot{S}_{q_{0}})f$$
with some given constant $q_{0}$.

Denote by $\mathcal{S}'_{0}:=\mathcal{S'}/\mathcal{P}$ the tempered distributions modulo polynomials $\mathcal{P}$. As we known, the homogeneous
Besov spaces can be characterised in terms of the above spectral cut-off blocks.

\subsection{{\bf Homogeneous Besov space}}
\hspace*{\fill}
\begin{defn}\label{defn2.1}
For $s\in \mathbb{R}$ and $1\leq p,r\leq \infty$, the homogeneous Besov spaces $\dot{B}^s_{p,r}$ are defined by
$$\dot{B}^s_{p,r}:=\Big\{f\in \mathcal{S}'_{0}:\|f\|_{\dot{B}^s_{p,r}}<\infty  \Big\} ,$$
where
\begin{equation*}
\|f\|_{\dot{B}^s_{p,r}}:=\Big(\sum_{q\in\mathbb{Z}}(2^{qs}\|\dot{\Delta}_qf\|_{L^{p}})^{r}\Big)^{1/r}
\end{equation*}
with the usual convention if $r=\infty$.
\end{defn}

We often use the following classical properties of Besov spaces (see \cite{BCD}):

$\bullet$ \ \emph{Scaling invariance:} For any $\sigma\in \mathbb{R}$ and $(p,r)\in
[1,\infty ]^{2}$, there exists a constant $C=C(\sigma,p,r,d)$ such that for all $\lambda >0$ and $f\in \dot{B}_{p,r}^{\sigma}$, we have
$$
C^{-1}\lambda ^{\sigma-\frac {d}{p}}\|f\|_{\dot{B}_{p,r}^{\sigma}}
\leq \|f(\lambda \,\cdot)\|_{\dot{B}_{p,r}^{\sigma}}\leq C\lambda ^{\sigma-\frac {d}{p}}\|f\|_{\dot{B}_{p,r}^{\sigma}}.
$$

$\bullet$ \ \emph{Completeness:} $\dot{B}^{\sigma}_{p,r}$ is a Banach space whenever $
\sigma<\frac{d}{p}$ or $\sigma\leq \frac{d}{p}$ and $r=1$.

$\bullet$ \ \emph{Interpolation:} The following inequality is satisfied for $1\leq p,r_{1},r_{2}, r\leq \infty, \sigma_{1}\neq \sigma_{2}$ and $\theta \in (0,1)$:
$$\|f\|_{\dot{B}_{p,r}^{\theta \sigma_{1}+(1-\theta )\sigma_{2}}}\lesssim \|f\| _{\dot{B}_{p,r_{1}}^{\sigma_{1}}}^{\theta} \|f\|_{\dot{B}_{p,r_2}^{\sigma_{2}}}^{1-\theta }$$
with $\frac{1}{r}=\frac{\theta}{r_{1}}+\frac{1-\theta}{r_{2}}$.

$\bullet$ \ \emph{Action of Fourier multipliers:} If $F$ is a smooth homogeneous of
degree $m$ function on $\mathbb{R}^{d}\backslash \{0\}$ then
$$F(D):\dot{B}_{p,r}^{\sigma}\rightarrow \dot{B}_{p,r}^{\sigma-m}.$$


The embedding properties will be used several times throughout the paper.
\begin{prop} \label{Prop2.1}
\begin{itemize}
\item For any $p\in[1,\infty]$ we have the continuous embedding
$\dot {B}^{0}_{p,1}\hookrightarrow L^{p}\hookrightarrow \dot{B}^{0}_{p,\infty}$.
\item If $\sigma\in \mathbb{R}$, $1\leq p_{1}\leq p_{2}\leq\infty$ and $1\leq r_{1}\leq r_{2}\leq\infty,$
then $\dot {B}^{\sigma}_{p_1,r_1}\hookrightarrow
\dot {B}^{\sigma-d\,(\frac{1}{p_{1}}-\frac{1}{p_{2}})}_{p_{2},r_{2}}$.
\item The space $\dot {B}^{\frac {d}{p}}_{p,1}$ is continuously embedded in the set of
bounded  continuous functions (going to zero at infinity if, additionally, $p<\infty$).
\end{itemize}
\end{prop}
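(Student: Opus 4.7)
The plan rests on three standard ingredients from Littlewood--Paley theory: the triangle inequality on the dyadic decomposition, Bernstein's inequalities for frequency-localized functions, and the monotone inclusion $\ell^{r_1}\hookrightarrow\ell^{r_2}$ for $r_1\le r_2$. I would treat the three bullets sequentially, as each reduces to one of these mechanisms applied to $\dot{\Delta}_q f$.

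For the first bullet, I would handle the two embeddings separately. The inclusion $\dot{B}^0_{p,1}\hookrightarrow L^p$ follows by writing $f=\sum_q\dot{\Delta}_q f$ (convergent in $\mathcal{S}'_0$) and applying the $L^p$ triangle inequality to obtain $\|f\|_{L^p}\le\sum_q\|\dot{\Delta}_q f\|_{L^p}=\|f\|_{\dot{B}^0_{p,1}}$. Conversely, $\dot{\Delta}_q$ is convolution with $2^{qd}\mathcal{F}^{-1}\varphi(2^q\cdot)$, whose $L^1$ norm is independent of $q$, so Young's inequality yields $\|\dot{\Delta}_q f\|_{L^p}\lesssim\|f\|_{L^p}$ uniformly in $q$; taking the supremum then gives $L^p\hookrightarrow\dot{B}^0_{p,\infty}$.

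For the second bullet, the key tool is Bernstein's inequality: if $\widehat{g}$ is supported in an annulus of size $\sim 2^q$, then $\|g\|_{L^{p_2}}\lesssim 2^{qd(1/p_1-1/p_2)}\|g\|_{L^{p_1}}$ whenever $p_1\le p_2$. Applied to $g=\dot{\Delta}_q f$, this precisely converts the $L^{p_1}$ norms entering the Besov seminorm into $L^{p_2}$ norms at the cost of shifting the regularity index by $d(1/p_1-1/p_2)$. The monotonicity of $\ell^r$ norms then upgrades $r_1$ to $r_2$, and the two operations combine to yield the claimed embedding.

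For the third bullet, Bernstein again gives $\|\dot{\Delta}_q f\|_{L^\infty}\lesssim 2^{qd/p}\|\dot{\Delta}_q f\|_{L^p}$, so that $\|f\|_{L^\infty}\le\sum_q\|\dot{\Delta}_q f\|_{L^\infty}\lesssim\|f\|_{\dot{B}^{d/p}_{p,1}}$, and the series converges absolutely and uniformly. Since each $\dot{\Delta}_q f$ is continuous (convolution of a tempered distribution against a Schwartz function), the uniform limit $f$ is bounded and continuous. When $p<\infty$, each $\dot{\Delta}_q f\in L^p$ is smooth and decays at infinity, and the uniform convergence of the series transfers this decay to $f$. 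The one subtlety deserving care throughout is the reconstruction of $f$ from its dyadic pieces in $\mathcal{S}'_0$ (i.e.\ modulo polynomials), rather than in $\mathcal{S}'$; this is the main technical point and is handled in detail in Bahouri--Chemin--Danchin.
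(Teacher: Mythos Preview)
Your proposal is correct and follows the standard Littlewood--Paley route. Note, however, that the paper does not actually prove this proposition: it is stated in the Appendix as a known collection of embedding facts and the reader is referred to \cite{BCD} (Bahouri--Chemin--Danchin) for details. Your outline is essentially the argument one finds there, so there is nothing to compare against beyond confirming that your sketch matches the reference the paper invokes.
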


In addition, we also recall the classical \emph{Bernstein inequality}:
\begin{equation}\label{Eq:2.6}
\|D^{k}f\|_{L^{b}}
\leq C^{1+k} \lambda^{k+d(\frac{1}{a}-\frac{1}{b})}\|f\|_{L^{a}}
\end{equation}
that holds for all function $f$ such that $\mathrm{Supp}\,\mathcal{F}f\subset\left\{\xi\in \mathbb{R}^{d}: |\xi|\leq R\lambda\right\}$ for some $R>0$
and $\lambda>0$, if $k\in\mathbb{N}$ and $1\leq a\leq b\leq\infty$.

More generally, if we assume $f$ to satisfy $\mathrm{Supp}\,\mathcal{F}f\subset\{\xi\in \mathbb{R}^{d}:
R_{1}\lambda\leq|\xi|\leq R_{2}\lambda\}$ for some $0<R_{1}<R_{2}$ and $\lambda>0$, then for any smooth
homogeneous of degree $m$ function $A$ on $\mathbb{R}^d\setminus\{0\}$ and $1\leq a\leq\infty$, we have
(see e.g. Lemma 2.2 in \cite{BCD}):
\begin{equation}\label{Eq:2.7}
\|A(D)f\|_{L^{a}}\approx\lambda^{m}\|f\|_{L^{a}}.
\end{equation}
An obvious  consequence of (\ref{Eq:2.6}) and (\ref{Eq:2.7}) is that
$\|D^{k}f\|_{\dot{B}^{s}_{p, r}}\thickapprox \|f\|_{\dot{B}^{s+k}_{p, r}}$ for all $k\in\mathbb{N}$.

Moreover, a class of mixed space-time Besov spaces are also used when studying the evolution PDEs, which were firstly
proposed by J.-Y. Chemin and N. Lerner in \cite{CL}.
\begin{defn}\label{defn2.2}
For $T>0,s\in \mathbb{R}, 1\leq r,\theta \leq \infty$, the homogeneous Chemin-Lerner spaces $\tilde{L}^\theta_{T}(\dot{B}^s_{p,r})$ are defined by
$$\tilde{L}^\theta_{T}(\dot{B}^s_{p,r}):=\Big\{f\in L^\theta(0,T;\mathcal{S}'_{0}) :\|f\|_{\tilde{L}^\theta_{T}(\dot{B}^s_{p,r})}<\infty  \Big\}, $$
where
$$\|f\|_{\tilde{L}^\theta_{T}(\dot{B}^s_{p,r})}:=\Big(\sum_{q\in \mathbb{Z}}(2^{qs}\|\dot{\Delta}_qf\|_{L^\theta_{T}(L^{p})})^{r}\Big)^{1/r} $$
with the usual convention if  $r=\infty$.
\end{defn}

The Chemin-Lerner space $\widetilde{L}^{\theta}_{T}(\dot{B}^{s}_{p,r})$ may be linked with the standard spaces $L_{T}^{\theta}(\dot{B}^{s}_{p,r})$ by means of Minkowski's inequality.
\begin{rem}\label{Rem2.1}
It holds that
$$\left\|f\right\|_{\widetilde{L}^{\theta}_{T}(\dot{B}^{s}_{p,r})}\leq\left\|f\right\|_{L^{\theta}_{T}(\dot{B}^{s}_{p,r})}\,\,\,
\mbox{if} \,\, \, r\geq\theta;\ \ \ \
\left\|f\right\|_{\widetilde{L}^{\theta}_{T}(\dot{B}^{s}_{p,r})}\geq\left\|f\right\|_{L^{\theta}_{T}(\dot{B}^{s}_{p,r})}\,\,\,
\mbox{if}\,\,\, r\leq\theta.
$$
\end{rem}

\subsection{{\bf Product estimates and composition estimates}}
\hspace*{\fill}

The product estimates in Besov spaces play a fundamental role in bounding bilinear terms in \eqref{linearized} (see \cite{BCD}).

\begin{prop}\label{prop3.2}
Let $s>0$ and $1\leq p,\,r\leq\infty$. Then $\dot{B}^{s}_{p,r}\cap L^{\infty}$ is an algebra and
$$
\|fg\|_{\dot{B}^{s}_{p,r}}\lesssim \|f\|_{L^{\infty}}\|g\|_{\dot{B}^{s}_{p,r}}+\|g\|_{L^{\infty}}\|f\|_{\dot{B}^{s}_{p,r}}.
$$
If $s_{1},s_{2}\leq\frac{d}{p}$, $s_{1}+s_{2}>d\max\{0,\frac{2}{p}-1\}$, then
$$\|ab\|_{\dot{B}^{s_{1}+s_{2}-\frac{d}{p}}_{p,1}}\lesssim\|a\|_{\dot{B}^{s_{1}}_{p,1}}\|b\|_{\dot{B}^{s_{2}}_{p,1}}.$$
If $s_{1}\leq\frac{d}{p}$, $s_{2}<\frac{d}{p}$, $s_{1}+s_{2}\geq d\max\{0,\frac{2}{p}-1\}$, then
$$\|ab\|_{\dot{B}^{s_{1}+s_{2}-\frac{d}{p}}_{p,\infty}}\lesssim\|a\|_{\dot{B}^{s_{1}}_{p,1}}\|b\|_{\dot{B}^{s_{2}}_{p,\infty}}.$$
\end{prop}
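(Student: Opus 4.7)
The plan is to use Bony's paraproduct decomposition
$$ fg = T_f g + T_g f + R(f,g), $$
where $T_f g = \sum_q \dot{S}_{q-1} f \, \dot{\Delta}_q g$ and $R(f,g) = \sum_{|q-q'| \le 1} \dot{\Delta}_q f \, \widetilde{\dot{\Delta}}_{q'} g$, and to bound each of the three pieces separately in the target Besov norm using Bernstein's inequality together with the almost-orthogonality of the Littlewood--Paley blocks.

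For the algebra property (first inequality), note that each term $\dot{S}_{q-1} f \, \dot{\Delta}_q g$ is frequency-localized in a shell of size $2^q$, so a direct application of the $L^\infty \cdot L^p \to L^p$ bound gives
$$ 2^{qs}\|\dot{\Delta}_q T_f g\|_{L^p} \lesssim \|f\|_{L^\infty}\,2^{qs}\|\dot{\Delta}_q g\|_{L^p}, $$
yielding $\|T_f g\|_{\dot{B}^s_{p,r}} \lesssim \|f\|_{L^\infty}\|g\|_{\dot{B}^s_{p,r}}$; the term $T_g f$ is symmetric. For $R(f,g)$, each block $\dot{\Delta}_j R(f,g)$ only collects pairs with $q,q'\ge j - N_0$, and after Bernstein and H\"older one gets a discrete convolution inequality in $\ell^r$ whose convergence is guaranteed by $s>0$.

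For the two scaling-type inequalities, I would apply the same decomposition but track the regularity of each factor. The two workhorse bounds are: the paraproduct estimate $\|\dot{S}_{q-1} a\|_{L^\infty} \lesssim 2^{q(\frac{d}{p}-s_1)}\|a\|_{\dot{B}^{s_1}_{p,1}}$ when $s_1\le d/p$, which immediately yields $\|T_a b\|_{\dot{B}^{s_1+s_2-d/p}_{p,r}} \lesssim \|a\|_{\dot{B}^{s_1}_{p,1}}\|b\|_{\dot{B}^{s_2}_{p,r}}$ with $r=1$ or $r=\infty$; and the remainder bound, where the hypothesis $s_1+s_2 > d\max(0,\tfrac{2}{p}-1)$ is exactly what allows the diagonal sum to close, using Bernstein to upgrade $L^{p/2}$ to $L^p$ when $p\ge 2$ and H\"older with the conjugate exponent $p/(2-p)$ when $p<2$. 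For the third claim, the weaker $\ell^\infty$ summability of $b$ propagates through each piece: one simply replaces $\sum_j$ by $\sup_j$ on the $b$-factor, which is exactly why the target space carries the subscript $\infty$.

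The main obstacle is the remainder piece at the endpoints: when $s_1$ sits at $d/p$ or $s_1+s_2$ sits at the critical threshold $d\max(0,\tfrac{2}{p}-1)$, summing over dyadic scales fails in $\ell^1$ and one is forced into the weaker $\ell^\infty$ conclusion (which is precisely the reason the third statement asks for a strict inequality $s_2<d/p$ while allowing $s_1=d/p$, and lands in $\dot{B}^\cdot_{p,\infty}$ rather than $\dot{B}^\cdot_{p,1}$). Once these threshold conditions are respected, the proof is purely combinatorial bookkeeping of geometric series in $j$ and $q$.
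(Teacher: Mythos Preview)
Your approach via Bony's paraproduct decomposition is the standard one and is correct. Note, however, that the paper does not actually prove this proposition: it is stated in the Appendix as a known tool with a reference to \cite{BCD} (Bahouri--Chemin--Danchin), where the argument you outline is carried out in detail. So there is nothing to compare against; your sketch simply reproduces the classical proof from the cited source.
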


System  \eqref{linearized}  also involves compositions of functions that
are handled according to the following estimates.

\begin{prop}\label{prop2.25}
Let $F:\mathbb{R}\rightarrow \mathbb{R}$ be smooth with $F(0)=0$. For all $1\leq p,\,r\leq\infty$ and $s>0$ we have
$F(f)\in \dot {B}^{s}_{p,r}\cap L^{\infty}$ for $f\in \dot{B}^{s}_{p,r}\cap L^{\infty}$, and
$$\|F(f)\|_{\dot B^{s}_{p,r}}\leq C\|f\|_{\dot B^{s}_{p,r}}$$
with $C>0$ depending only on $\|f\|_{L^{\infty}}$, $F'$ (and higher derivatives), $s$, $p$ and $d$.

In the case $s>-d\min(\frac {1}{p},\frac {1}{p'})$ then $f\in\dot{B}^{s}_{p,r}\cap\dot {B}^{\frac {d}{p}}_{p,1}$
implies that $F(f)\in \dot{B}^{s}_{p,r}\cap\dot {B}^{\frac {d}{p}}_{p,1}$, and
$$\|F(f)\|_{\dot B^{s}_{p,r}}\leq C\|f\|_{\dot {B}^{s}_{p,r}},$$
where $C>0$ is some constant depends on $\|f\|_{\dot B^{\frac{d}{p}}_{p,1}}$, $F, s, p$ and $d$.
\end{prop}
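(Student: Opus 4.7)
The plan is to follow Meyer's first linearization trick for both parts of the statement. For the first claim ($s>0$), I would exploit the telescopic identity
\begin{equation*}
F(f)=\sum_{q\in\Z}\bigl[F(\dot S_{q+1}f)-F(\dot S_q f)\bigr]=\sum_{q\in\Z}m_q(x)\,\dot\Delta_q f(x),
\end{equation*}
where $m_q(x)\triangleq\int_0^1 F'\bigl(\dot S_q f(x)+t\dot\Delta_q f(x)\bigr)\,dt$. Convergence in $\cS'_0$ needs a brief justification using $\|f\|_{\dot B^{s}_{p,r}}<\infty$ together with $F(0)=0$. Since $|\dot S_q f|+|\dot\Delta_q f|\lesssim\|f\|_{L^\infty}$ uniformly in $q$, smoothness of $F$ yields $\|m_q\|_{L^\infty}\leq C_0(\|f\|_{L^\infty})$. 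The chain rule combined with the Bernstein inequality (\ref{Eq:2.6}) further gives $\|\nabla^N m_q\|_{L^\infty}\lesssim C_N(\|f\|_{L^\infty})\,2^{qN}$ for every $N\in\N$.

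The next step is to estimate $\dot\Delta_j F(f)$ by splitting the sum in $q$ according to whether $q\geq j$ or $q<j$. For $q\geq j$, I would use the trivial bound $\|\dot\Delta_j(m_q\dot\Delta_q f)\|_{L^p}\lesssim C_0\|\dot\Delta_q f\|_{L^p}$. For $q<j$, I would exploit the frequency support of $\dot\Delta_j$, writing $\dot\Delta_j g=2^{-jN}\mathcal{F}^{-1}\bigl(\varphi(2^{-j}\xi)/(i\xi)^N\bigr)\!*\!\partial^N g$, to transfer $N$ derivatives onto $m_q\dot\Delta_q f$; Leibniz then yields
\begin{equation*}
\|\dot\Delta_j(m_q\dot\Delta_q f)\|_{L^p}\lesssim 2^{(q-j)N}\,C_N(\|f\|_{L^\infty})\,\|\dot\Delta_q f\|_{L^p}.
\end{equation*}
Choosing $N>s$, the sequence $\bigl(2^{js}\|\dot\Delta_j F(f)\|_{L^p}\bigr)_j$ becomes a discrete convolution of $\bigl(2^{qs}\|\dot\Delta_q f\|_{L^p}\bigr)_q$ with an $\ell^1(\Z)$ kernel (in the index $j-q$), so discrete Young's inequality in $\ell^r$ closes the first estimate.

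For the second claim, the restriction $s>0$ is relaxed to $s>-d\min(\tfrac1p,\tfrac1{p'})$ at the price of an additional $\dot B^{d/p}_{p,1}$ control. The idea is to refine the telescope via Taylor expansion:
\begin{equation*}
F(\dot S_{q+1}f)-F(\dot S_q f)=F'(\dot S_q f)\,\dot\Delta_q f+R_q,\qquad |R_q|\lesssim|\dot\Delta_q f|^2.
\end{equation*}
The principal piece $\sum_q F'(\dot S_q f)\dot\Delta_q f$ is, up to an admissible error, the paraproduct $T_{F'(f)}f$; since $F'(f)-F'(0)$ lies in $L^\infty\cap\dot B^{d/p}_{p,1}$ by the first claim applied to $F'-F'(0)$ (using the embedding $\dot B^{d/p}_{p,1}\hookrightarrow L^\infty$), Bony's paraproduct estimates, together with the product estimate of Proposition \ref{prop3.2}, give the bound precisely in the range $s>-d\min(\tfrac1p,\tfrac1{p'})$. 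The quadratic remainders $\sum_q R_q$ are absorbed by Proposition \ref{prop3.2} applied to the product $\dot\Delta_q f\cdot\dot\Delta_q f$, which contributes no worse than $\|f\|_{\dot B^{d/p}_{p,1}}\|f\|_{\dot B^s_{p,r}}$.

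The main technical obstacle I anticipate is twofold: first, justifying convergence of the telescopic series in $\cS'_0$ uniformly in the parameters (delicate at the endpoints $s=-d/p$ and $s=d/p'$ which are excluded); second, ensuring that the paraproduct decomposition in the second claim absorbs all remainder pieces without reintroducing a dependence on $\|f\|_{\dot B^s_{p,r}}$ in the multiplicative constant. Both issues are handled by carefully tracking which factor is estimated in $L^\infty$ (or $\dot B^{d/p}_{p,1}$) and which in $\dot B^s_{p,r}$, ensuring linearity in the latter.
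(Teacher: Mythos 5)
The paper does not prove this proposition; it is stated without proof in the Appendix and attributed to \cite{BCD} (where it is Theorems~2.87 and~2.89 in that reference). So your proposal is a standalone reconstruction, and I evaluate it as such.

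Your treatment of the first assertion ($s>0$) is correct and follows the standard Meyer first-linearisation argument that \cite{BCD} also uses: the telescope $F(f)=\sum_q m_q\,\dot\Delta_q f$ with $m_q=\int_0^1 F'(\dot S_q f+t\dot\Delta_q f)\,dt$, the uniform bounds $\|\nabla^N m_q\|_{L^\infty}\lesssim C_N(\|f\|_{L^\infty})\,2^{qN}$ from Bernstein, the two-sided $q\gtrless j$ split, and discrete Young's inequality with an $\ell^1(\mathbb Z)$ kernel. Nothing to add there beyond a short remark that convergence of the telescope in $\mathcal S'_0$ is classical once $f\in L^\infty$ and $F(0)=0$.

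The second assertion, however, has a genuine gap. You reduce the principal piece $\sum_q F'(\dot S_q f)\,\dot\Delta_q f$ to the paraproduct $T_{F'(f)}f$ ``up to an admissible error,'' but that error,
\[
\sum_{q}\bigl[F'(\dot S_q f)-\dot S_{q-1}F'(f)\bigr]\dot\Delta_q f,
\]
is not obviously small: for fixed $q$ the coefficient is not frequency-localised, does not tend to zero, and at negative regularities $s<0$ its contribution cannot be absorbed by the crude $L^\infty$ bound you use. Controlling it is in fact the crux of the negative-$s$ case, and simply asserting admissibility does not close the argument. Likewise, absorbing the Taylor remainders $\sum_q R_q$ with $|R_q|\lesssim|\dot\Delta_q f|^2$ is delicate because each $R_q$ has frequency support filling all of $\{|\xi|\lesssim 2^q\}$, so for $s<0$ the low-frequency pieces need a separate argument that you have not supplied.

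The clean way to get the second assertion, and the one used in the cited reference, avoids re-linearising the telescope altogether. Write $G(u)\triangleq\int_0^1 F'(tu)\,dt$, so that $F(f)=f\,G(f)=F'(0)\,f+\bigl(G(f)-G(0)\bigr)f$, and $G-G(0)$ is smooth with $(G-G(0))(0)=0$. Apply the already-established first assertion (with exponent $d/p$) to $G-G(0)$, using $\dot B^{d/p}_{p,1}\hookrightarrow L^{\infty}$, to get $G(f)-G(0)\in \dot B^{d/p}_{p,1}\cap L^\infty$ with norm controlled by $\|f\|_{\dot B^{d/p}_{p,1}}$. Then the product $(G(f)-G(0))\,f$ is estimated directly by the second inequality of Proposition~\ref{prop3.2} with $s_1=d/p$, $s_2=s$, whose validity condition $s_1+s_2>d\max(0,\tfrac 2p-1)$ is exactly $s>-d\min(\tfrac1p,\tfrac1{p'})$. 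This yields $\|F(f)\|_{\dot B^s_{p,r}}\lesssim |F'(0)|\,\|f\|_{\dot B^s_{p,r}}+\|G(f)-G(0)\|_{\dot B^{d/p}_{p,1}}\|f\|_{\dot B^s_{p,r}}$, with the constant depending only on $\|f\|_{\dot B^{d/p}_{p,1}}$ and $F$, which is precisely the statement.
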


\subsection{{\bf Proof of Proposition \ref{dispersive estimates}}}
\hspace*{\fill}

Taking advantages of interpolation, it is enough to prove the case $p=\infty$. Inspired by Young inequality and heat kernel estimates under Fourier localization, we immediately have
\begin{eqnarray}
\|e^{i\sqrt{\kappa} Ht}e^{\Delta t}f_{j}\|_{L^\infty}\lesssim  e^{-2^{2j}t}\|e^{i\sqrt{\kappa} Ht}\psi_{j}\|_{L^\infty}\|f_{j}\|_{L^1}.
\end{eqnarray}
where $\hat{\psi}(\xi)=\varphi(\xi)$. Then (\ref{infty}) is given if the following estimate holds true:
\begin{eqnarray}\label{pointwise}
\|e^{i\sqrt{\kappa} Ht}\psi_{j}\|_{L^\infty}\lesssim  (\sqrt{\kappa} t)^{-\frac{d}{2}}.
\end{eqnarray}

The (\ref{pointwise}) is proved by stationary phase method given in \cite{GPW} and we shall focus on case $d\geq2$ while $d=1$ is more direct by van der Corput's lemma, see \cite{VV}. Actually, denote by $J_{m} (r)$ the Bessel function, one could write
\begin{eqnarray}\label{Bessel}
e^{i\sqrt{\kappa} Ht}\psi_{j}=2^{jd}\int^{\infty}_{0}e^{it\tilde{H}(2^{j}r)}\varphi(r)r^{d-1}(r2^{j}|x|)^{-\frac{n-2}{2}}J_{\frac{n-2}{2}}(r2^{j}|x|)dr
\end{eqnarray}
where $\tilde{H}(r)=r\sqrt{1+\kappa r^2}$. Then it is not difficult to see
$$\tilde{H}'(r)\thicksim 1;\,\,\,\tilde{H}''(r)\thicksim \kappa r,\,\,\,\tilde{H}^{(m)}(r)\leq (\kappa r)^{1-m},\,\,\,r\leq\kappa^{-\frac{1}{2}};$$
$$\tilde{H}'(r)\thicksim \sqrt{\kappa}r;\,\,\,\tilde{H}''(r)\thicksim \sqrt{\kappa},\,\,\,\tilde{H}^{(m)}(r)\leq \sqrt{\kappa}r^{2-m},\,\,\,r\geq\kappa^{-\frac{1}{2}}.$$
Therefore, we would consider case (i). $2^{j}\leq\kappa^{-\frac{1}{2}}$ and (ii). $2^{j}\leq\kappa^{-\frac{1}{2}}$ respectively. For case (i), the corresponding behaviors are closely linked with wave operator and we start with $|x|\leq2$. In fact, denote $D_{r}=\frac{1}{it\tilde{H}'(2^j r)2^j}\frac{d}{dr}$, integral by parts in terms of $D_{r}$  immediately yields
\begin{eqnarray}\label{mmm}
e^{i\sqrt{\kappa} Ht}\psi_{j}&=&2^{jd}\int^{\infty}_{0}D^{k}_{r}\big(e^{it\tilde{H}(2^{j}r)}\big)\varphi(r)r^{d-1}(r2^{j}|x|)^{-\frac{n-2}{2}}J_{\frac{n-2}{2}}(r2^{j}|x|)dr\\
\nonumber&=&\frac{2^{jd}}{(it2^j)^k}\sum^{k}_{m=0}\sum^{m}_{l_{m}}\int^{\infty}_{0}e^{it\tilde{H}(2^{j}r)}\prod_{l_{m}}\partial^{l_{m}}_{r}\big(\frac{1}{\tilde{H}'(2^j r)}\big)\\
\nonumber&\cdot&\partial^{k-m}_{r}\big(\varphi(r)r^{d-1}(r2^{j}|x|)^{-\frac{n-2}{2}}J_{\frac{n-2}{2}}(r2^{j}|x|)\big)dr.
\end{eqnarray}
where $m=l_{1}+l_{2}+...+l_{m}$. Keep in mind that for any $m\geq0$
$$\frac{d^{m}}{d^{m}_{r}}\big(\frac{1}{\tilde{H}'(2^j r)}\big)\leq c,\,\,\,\mathrm{for}\,\,\,2^{j}\leq\kappa^{-\frac{1}{2}},$$
the vanishing property of the Bessel function at the origin indicates
\begin{eqnarray}\label{mmmmm}
|e^{i\sqrt{\kappa} Ht}\psi_{j}|&\leq&Ct^{-k}2^{j(d-k)}.
\end{eqnarray}
Hence, taking $k=\frac{d}{2}$ yields (\ref{pointwise}). For case $|x|\geq2$, we rewrite (\ref{Bessel}) into
 \begin{eqnarray}
e^{i\sqrt{\kappa} Ht}\psi_{j}=2^{jd}\int^{\infty}_{0}e^{it(\tilde{H}(2^{j}r)-r|x|)}\varphi(r)r^{d-1}h(r|x|)dr
\end{eqnarray}
where
$$\mathcal{R}(e^{ir}h(r))=cr^{-\frac{n-2}{2}}J_{\frac{n-2}{2}}(r).$$
At this moment, we start with $|x|$ fulfills $\frac{1}{2}\inf\limits_{r}t2^{j}\tilde{H}'(2^{j}r)\leq|x|\leq2\sup\limits_{r} t2^{j}\tilde{H}'(2^{j}r)$. Denote $\bar{H}(r)=\tilde{H}(2^{j}r)-r|x|$, it is clear that $\bar{H}''(r)=2^{2j}\tilde{H}''(2^{j}r)\geq\kappa 2^{3j}$, therefore, by van der Corput¡¯s lemma and pointwise estimates for $h$ (see in \cite{GPW}), there holds
\begin{eqnarray*}
|e^{i\sqrt{\kappa} Ht}\psi_{j}|&\leq&C(|t|\kappa 2^{3j})^{-\frac{1}{2}}\int^{\infty}_{0}\big|\frac{d}{dr}(\varphi(r)r^{d-1}h(r|x|))\big|dr\leq t^{-\frac{d}{2}}\kappa^{-\frac{1}{2}}2^{\frac{d-2}{2}j},
\end{eqnarray*}
which implies (\ref{pointwise}) by the fact $2^{j}\leq\kappa^{-\frac{1}{2}}$.
For $|x|\leq\frac{1}{2}\inf\limits_{r}t2^{j}\tilde{H}'(2^{j}r)$ and $2\sup\limits_{r} t2^{j}\tilde{H}'(2^{j}r)\leq|x|$, there holds
$$\big|\bar{H}'(r)\big|\geq c2^{j},\,\,\,c>0,$$
we could repeat same calculations as (\ref{mmm})-(\ref{mmmmm}) and conclude with (\ref{pointwise}).

The case $2^{j}\geq\kappa^{-\frac{1}{2}}$ is treated as low frequencies above, in which part could be regarded as a \Scho\, semi-group with parameter $\sqrt{\kappa}$. We omit the detailed proof and conclude with (\ref{pointwise}).

\noindent {\bf Acknowledgments:}\ \

The author sincerely thanks Professor Nakanishi Kenji and Professor Jiang Xu for helpful suggestions and discussions in the course of this research.

\noindent {\bf Conflicts of interest statement:}\ \

The author does not have any possible conflict of interest.

\end{document}